\newtheorem{theorem}{Theorem}[section]
\newtheorem{lemma}[theorem]{Lemma}
\newtheorem{proposition}[theorem]{Proposition}
\title{On The Characteristic Polynomial of Frobenius of Supersingular Abelian Varieties Of Dimension up to 7 over Finite Fields}
\author{Vijaykumar Singh, Alexey Zatysev, Gary McGuire}
\begin{document}
\maketitle

\begin{abstract}  In this article, we derive the list of  the characteristic polynomials of the Frobenius endomorphism of  simple supersingular abelian varieties of dimension $1,~2,~3,~4,~5,~6,~7$ over $\mathbb{F}_q$ where $q=p^n$,~ $n$ odd. 
\end{abstract}
\section{Introduction}
Supersingular abelian varieties have applications in cryptography  and coding theory and related areas. Identity based ecryption and computation of weights of some Reed Muller codes are some of them. The important (isogeny) invariant which carries most of the information about supersingular curves is the characteristic polynomial of the Frobenius endomorphism. Here we give a list of such polynomials up to dimension $7$ over $\mathbb{F}_q$ where $q=p^n$,~ $n$ odd. We also give the procedure, which extends  to all dimensions.

Let $A$ be an abelian variety of dimension $g$ over $\mathbb{F}_q$ where $q =p^n$. For $l \neq p$, the characteristic polynomial of Frobenius endomorphism $\alpha$ is defined as,
\begin{displaymath}
P_A(X):=\det(\alpha-XId|V_l(A)).
\end{displaymath}
The above definition is independent of choice of $l$. The coefficients of  $P_A(X)$ are in $\mathbb{Z}$. In fact,  
\begin{displaymath}
P_A(X) = X^{2g} + a_{1}X^{2g-1} + \cdots+ a_gX^g +qa_{g-1}X^{g-1} \cdots + q^g.
\end{displaymath}
An abelian variety $A$ is $k$-simple if it is not isogenous to a product of abelian varieties of lower dimensions over $k$. In that case $P_A(X)$ is either irreducible over $\mathbb{Z}$ or $P(X)=h(X)^e$ where $h(X)\in \mathbb{Z}[X]$ is an irreducible over $\mathbb{Z}$ , see \cite{Water-Milne}.
We have the following result from Tate \cite{Tate}.
\begin{theorem} \label{Tate} If $A$ and $B$ are the abelian varieties defined over $\mathbb{F}_q$. Then $A$ is $\mathbb{F}_q$-isogenous to abelian subvariety of $B$ if and only if $P_A(X)$ divides $P_B(X)$ over $\mathbb{Q}[X]$. In particular, $P_A(X) = P_B(X)$ if and only if $A$ and $B$ are $\mathbb{F}_q$-isogenous.
\end{theorem}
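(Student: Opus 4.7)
The plan is to deduce this theorem from Tate's celebrated isogeny theorem for abelian varieties over finite fields, which asserts that for any prime $\ell \neq p$ the natural map
\[
\mathrm{Hom}_{\mathbb{F}_q}(A,B) \otimes_{\mathbb{Z}} \mathbb{Q}_\ell \longrightarrow \mathrm{Hom}_{\mathbb{Q}_\ell[F]}\bigl(V_\ell(A), V_\ell(B)\bigr)
\]
is an isomorphism, together with the semisimplicity of the Frobenius $F$ acting on $V_\ell$. These two inputs turn the problem of comparing $A$ and $B$ up to isogeny into a problem of comparing two semisimple $\mathbb{Q}_\ell[F]$-modules, which is then controlled entirely by characteristic polynomials.

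For the forward direction I would assume $A$ is $\mathbb{F}_q$-isogenous to an abelian subvariety $A' \subset B$ and invoke Poincar\'e complete reducibility to write $B \sim A' \times C$ for some complementary abelian subvariety $C$ defined over $\mathbb{F}_q$. Since isogenous abelian varieties share the same characteristic polynomial of Frobenius, $P_{A'} = P_A$, and since the characteristic polynomial is multiplicative over products, $P_B = P_{A'} \cdot P_C = P_A \cdot P_C$, which gives the desired divisibility in $\mathbb{Z}[X] \subset \mathbb{Q}[X]$.

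For the converse I would use semisimplicity. Both $V_\ell(A)$ and $V_\ell(B)$ are semisimple $\mathbb{Q}_\ell[F]$-modules, so each decomposes as a direct sum of isotypic components indexed by the $\mathbb{Q}_\ell$-irreducible factors of $P_A$ and $P_B$ respectively, with multiplicities encoded by those characteristic polynomials. Because $P_A \mid P_B$ in $\mathbb{Q}[X]$ (equivalently in $\mathbb{Q}_\ell[X]$), each isotypic component of $V_\ell(A)$ embeds into the corresponding component of $V_\ell(B)$, yielding a $\mathbb{Q}_\ell[F]$-linear injection $\iota : V_\ell(A) \hookrightarrow V_\ell(B)$. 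By Tate's theorem, after clearing denominators $\iota$ comes from a nonzero element $\phi \in \mathrm{Hom}_{\mathbb{F}_q}(A,B) \otimes \mathbb{Q}$, and the injectivity of $V_\ell(\phi) = \iota$ forces $\phi$ to have finite kernel. Its image $\phi(A) \subset B$ is then an abelian subvariety of $B$ isogenous to $A$. The ``in particular'' statement follows by symmetry and a dimension count: $P_A = P_B$ gives $\deg P_A = \deg P_B$, so $A$ embeds up to isogeny into $B$ as a subvariety of the full dimension $\dim B$, hence is isogenous to all of $B$.

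The main obstacle is the converse direction, which rests on the full strength of Tate's isogeny theorem; the nontrivial inputs are both the bijectivity of the $\mathbb{Q}_\ell$-Hom comparison map and the semisimplicity of Frobenius on $V_\ell$, each of which is a deep feature special to abelian varieties over finite fields. The rest of the argument is essentially a formal manipulation of semisimple modules and characteristic polynomials.
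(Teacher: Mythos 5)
The paper does not actually prove this statement: it is quoted verbatim as Tate's theorem and attributed to \cite{Tate}, so there is no internal proof to compare against. What you have written is essentially a reconstruction of the argument in Tate's original paper, and it is correct in outline: the forward direction via Poincar\'e complete reducibility and multiplicativity of characteristic polynomials, and the converse via semisimplicity of Frobenius on $V_\ell$ plus the isomorphism $\mathrm{Hom}_{\mathbb{F}_q}(A,B)\otimes\mathbb{Q}_\ell \cong \mathrm{Hom}_{\mathbb{Q}_\ell[F]}(V_\ell(A),V_\ell(B))$, with the ``in particular'' clause following from a dimension count since $\deg P_A = 2\dim A$. You also correctly isolate the two deep inputs.

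One step is stated imprecisely. Your $\iota$ lives in $\mathrm{Hom}_{\mathbb{Q}_\ell[F]}(V_\ell(A),V_\ell(B)) = \mathrm{Hom}_{\mathbb{F}_q}(A,B)\otimes\mathbb{Q}_\ell$, and ``clearing denominators'' will not in general land it in $\mathrm{Hom}_{\mathbb{F}_q}(A,B)\otimes\mathbb{Q}$: an element of a $\mathbb{Q}_\ell$-vector space need not be a $\mathbb{Q}_\ell$-multiple of a rational point. The standard fix is to observe that the injective elements form a nonempty Zariski-open (hence $\ell$-adically open and dense) subset of $\mathrm{Hom}_{\mathbb{Q}_\ell[F]}(V_\ell(A),V_\ell(B))$, while $\mathrm{Hom}_{\mathbb{F}_q}(A,B)\otimes\mathbb{Q}$ is dense in it by Tate's theorem; hence some $\phi\in\mathrm{Hom}_{\mathbb{F}_q}(A,B)\otimes\mathbb{Q}$ (not necessarily related to your particular $\iota$) has $V_\ell(\phi)$ injective, and a suitable integer multiple of $\phi$ is an honest homomorphism with finite kernel whose image is the desired subvariety. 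With that adjustment the argument is complete.
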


We can factor $P_A(X)$ over the complex numbers as $P_A(X)= \prod_{i=1}^{2g}(X-\alpha_i)$ where the $\alpha_i$ are algebraic integers.
 An algebraic integer $\pi \in \mathbb{C}$ is called a Weil-$q$-number if for every embedding $\sigma:\mathbb{Q}(\pi)\hookrightarrow \mathbb{C}$, $\sigma(\pi)=\sqrt{q}$.
 Let $W(q)$ be the set of a Weil-$q$-numbers in $\mathbb{C}$. Two elements $\pi$ and $\pi^{'}$  are conjugates ($\pi \sim \pi^{'})$ if they have same minimal polynomial over $\mathbb{Q}$. In fact we have following  one to one correspondence.
\begin{theorem}(Honda-Tate theorem)
 The map $A \rightarrow \pi_A$ defines
a bijection
\begin{displaymath} 
\{simple~abelian~varieties/ \mathbb{F}_q\}/(isogeny)\mapsto W(q)/ (conjugacy)
\end{displaymath}
\end{theorem}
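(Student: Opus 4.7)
The plan is to check that the assignment $A \mapsto \pi_A$ (where $\pi_A$ denotes a root of $P_A(X)$) is well-defined on isogeny classes with values in conjugacy classes of Weil $q$-numbers, and then to establish injectivity and surjectivity separately. For a simple $A/\mathbb{F}_q$, the remark preceding Theorem~\ref{Tate} shows $P_A(X) = h_A(X)^e$ for a single irreducible $h_A \in \mathbb{Z}[X]$, so the roots of $P_A$ form one Galois orbit and the conjugacy class $[\pi_A]$ is canonical. The property that each embedded $\pi_A$ has absolute value $\sqrt{q}$ is the Riemann hypothesis for abelian varieties over finite fields, so $\pi_A \in W(q)$; isogenous varieties share $P_A$, so the map descends to isogeny classes.

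For injectivity, suppose $A$ and $B$ are simple with $\pi_A \sim \pi_B$. Their minimal polynomials over $\mathbb{Q}$ agree with a common irreducible $h$, and since $P_A = h^e$ and $P_B = h^f$, one divides the other in $\mathbb{Q}[X]$. Theorem~\ref{Tate} then implies that $A$ is isogenous to an abelian subvariety of $B$ (or vice versa), and simplicity of both forces $A \sim B$.

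The main obstacle is surjectivity: given $\pi \in W(q)$, one must exhibit a simple $A/\mathbb{F}_q$ with $\pi_A \sim \pi$. This is Honda's contribution and genuinely requires CM theory. I would proceed as follows. First, analyze $E = \mathbb{Q}(\pi)$: either $E = \mathbb{Q}$ (so $\pi = \pm\sqrt{q}$, realized by supersingular elliptic curves for $n$ odd once one exhibits the relevant isogeny class by hand), $E$ is real quadratic with $\pi$ a square root of an integer of norm $q$, or $E$ is a CM field with maximal totally real subfield $E_0 = \mathbb{Q}(\pi + q/\pi)$. Second, in the CM case, apply the main theorem of complex multiplication (Shimura-Taniyama) to construct an abelian variety $\tilde A$ over a number field $K$ with CM by the ring of integers of $E$ for a CM type chosen so that the Shimura-Taniyama formula matches the $p$-adic valuations prescribed by $\pi$. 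Third, reduce $\tilde A$ at a prime $\mathfrak{P}$ of $K$ above $p$ of good reduction: the choice of CM type guarantees the reduction $\bar A/\mathbb{F}_{q'}$, for some finite extension $\mathbb{F}_{q'}/\mathbb{F}_q$, has a power of Frobenius equal to $\pi^m$.

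The hardest step is the final descent. Knowing only that some $\bar A/\mathbb{F}_{q^m}$ realizes $\pi^m$, one must produce an abelian variety over $\mathbb{F}_q$ itself whose Weil number is $\pi$. The strategy is to take the Weil restriction from $\mathbb{F}_{q^m}$ to $\mathbb{F}_q$, decompose it up to isogeny into simple factors, and use Theorem~\ref{Tate} together with a computation of characteristic polynomials of the induced Frobenius to isolate a simple factor $A/\mathbb{F}_q$ with $P_A(X)$ having $\pi$ among its roots. Verifying that the Shimura-Taniyama prescription of the CM type is compatible with the $p$-adic Newton polygon data of $\pi$, and that the descent does not introduce a mismatch in the power $e$ appearing in $P_A = h^e$, is the delicate technical heart of the argument.
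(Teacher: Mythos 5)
The paper offers no proof of this statement: it is quoted as the classical Honda--Tate theorem, with the injectivity half due to Tate and the surjectivity half due to Honda, so there is no in-paper argument to compare yours against. Your treatment of well-definedness and injectivity is complete and correct: for simple $A$ one has $P_A = h^e$ with $h$ irreducible, the Riemann hypothesis places the roots in $W(q)$, and if $\pi_A \sim \pi_B$ then $P_A$ and $P_B$ are powers of the same $h$, so one divides the other and Theorem~\ref{Tate} plus simplicity gives $A \sim B$. This is exactly the standard argument.

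The surjectivity half, however, is where essentially all of the content of the theorem sits, and your text for it is a roadmap rather than a proof. Three things are asserted without being established: (i) that a CM type on $E = \mathbb{Q}(\pi)$ exists whose Shimura--Taniyama data reproduce the prescribed valuations $v_{\mathfrak p}(\pi)/v_{\mathfrak p}(q)$ at every prime of $E$ above $p$ (this is a nontrivial combinatorial/local argument, and in the non-CM cases $E$ totally real one must instead pass to a quaternionic construction); (ii) that the CM abelian variety acquires good reduction after a finite extension and that its reduction realizes a power of $\pi$; and (iii) that the Weil restriction from $\mathbb{F}_{q^m}$ to $\mathbb{F}_q$ contains a simple factor with Weil number exactly $\pi$, which requires computing how characteristic polynomials behave under restriction of scalars. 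You correctly flag these as the delicate heart of the argument, but flagging them is not doing them, so as a proof the proposal is incomplete. A smaller point: your case analysis of $E$ is slightly off --- the only totally real Weil $q$-numbers are $\pm\sqrt{q}$, giving $E = \mathbb{Q}$ when $n$ is even and $E = \mathbb{Q}(\sqrt{p})$ when $n$ is odd; there is no further ``real quadratic with $\pi$ a square root of an integer of norm $q$'' case beyond this one.
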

An elliptic curve $E$ over  $\mathbb{F}_q$ is supersingular if $E({ \mathbb{\bar{F}}_q})$ has no points of order $p$.
 An abelian variety A over $\mathbb{F}_q$  is called supersingular if A is isogenous over ${\mathbb{\bar{F}}_{q}}$ to a product of supersingular elliptic curves.
\begin{theorem}(Manin-Oort)
 $A/\mathbb{F}_q$ is supersingular $\iff$ $\pi_A=\sqrt{q}\zeta$, where $\zeta$ is some root of unity.  
\end{theorem}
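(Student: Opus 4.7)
The plan is to prove both implications by base changing to a large finite extension $\mathbb{F}_{q^N}$ over which the Frobenius becomes a rational integer, and then descending the resulting isogeny statement via Tate's theorem (Theorem~\ref{Tate}). The bridge between the two sides of the equivalence is the fact that all supersingular elliptic curves are $\bar{\mathbb{F}}_q$-isogenous, so "supersingular of dimension $g$" is equivalent to being $\bar{\mathbb{F}}_q$-isogenous to $E^g$ for any single supersingular elliptic curve $E$.

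For the forward direction, I would assume $A$ is supersingular and fix a supersingular elliptic curve $E$ so that $A_{\bar{\mathbb{F}}_q}\sim E^g$. Choose an even integer $N$ large enough that (i) the isogeny $A_{\bar{\mathbb{F}}_q}\sim E^g$ descends to $\mathbb{F}_{q^N}$, (ii) $E$ is defined over $\mathbb{F}_{q^N}$, and (iii) every endomorphism of $E$ is $\mathbb{F}_{q^N}$-rational. Over such a field, $\pi_E$ must lie in the center of the endomorphism algebra, forcing $\pi_E=\pm q^{N/2}\in\mathbb{Z}$, and hence $P_{A_{\mathbb{F}_{q^N}}}(X)=(X\mp q^{N/2})^{2g}$. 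Since the roots of this polynomial are exactly the $N$-th powers of the roots $\alpha_i$ of $P_A$, I get $\alpha_i^{N}=\pm q^{N/2}$, so $(\alpha_i/\sqrt{q})^{2N}=1$, i.e.\ each $\alpha_i=\sqrt{q}\zeta_i$ with $\zeta_i$ a root of unity; in particular $\pi_A=\sqrt{q}\zeta$.

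For the converse I would assume $\pi_A=\sqrt{q}\zeta$ and pick an even $m$ with $\zeta^m=1$. Then every Galois conjugate of $\pi_A$, being of the same form $\sqrt{q}\zeta'$ with $\zeta'$ a root of unity of order dividing $2m$, raises to $q^{m/2}$ on the $m$-th power, so $P_{A_{\mathbb{F}_{q^m}}}(X)=(X-q^{m/2})^{2g}$. Pick any supersingular elliptic curve $E$ over $\mathbb{F}_{q^m}$ whose Frobenius is $q^{m/2}$ (such an $E$ exists, e.g.\ by base-changing a supersingular curve in characteristic $p$ up to $\mathbb{F}_{q^m}$ and replacing it by a suitable twist). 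Then $P_{E^{g}}(X)=(X-q^{m/2})^{2g}=P_{A_{\mathbb{F}_{q^m}}}(X)$, so Theorem~\ref{Tate} yields $A_{\mathbb{F}_{q^m}}\sim E^g$ over $\mathbb{F}_{q^m}$, and a fortiori $A_{\bar{\mathbb{F}}_q}\sim E^g$, proving supersingularity.

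The main obstacle is the structural input used in the forward direction: one must know that a supersingular elliptic curve over a field where all its endomorphisms are rational has Frobenius equal to $\pm q^{1/2}$. This rests on the fact that $\mathrm{End}^{0}(E)$ is a quaternion algebra over $\mathbb{Q}$ ramified at $p$ and $\infty$, so the center is $\mathbb{Q}$; once $\pi_E$ commutes with all of $\mathrm{End}(E)$ it must be a rational integer, and the Weil bound forces its square to equal $q$. Everything else — the existence of the descent field $\mathbb{F}_{q^N}$, the comparison of characteristic polynomials before and after base change, and the final application of Honda--Tate/Tate — is routine once this is in hand.
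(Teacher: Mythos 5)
Your argument is essentially correct, but note that the paper does not prove this statement at all: it is quoted as the classical Manin--Oort theorem and used as a black box, so there is no proof in the paper to compare against. What you have written is the standard proof of that classical result, and the logic is sound. The forward direction correctly reduces to the structural fact (Deuring) that $\mathrm{End}^0(E)$ of a supersingular elliptic curve over $\bar{\mathbb{F}}_p$ is the quaternion algebra $B_{p,\infty}$ with center $\mathbb{Q}$, so that once all endomorphisms are rational over $\mathbb{F}_{q^N}$ the Frobenius is a central algebraic integer of absolute value $q^{N/2}$, hence $\pm q^{N/2}$; the passage from $\alpha_i^N=\pm q^{N/2}$ to $\alpha_i=\sqrt{q}\,\zeta_i$ is then immediate. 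The converse via Tate's theorem (Theorem~\ref{Tate}) is also correct.

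Two small points deserve tightening. First, in the converse you say each Galois conjugate of $\pi_A$ is $\sqrt{q}\,\zeta'$ with $\zeta'$ of order dividing $2m$; what you actually need, and what is true, is that $\zeta'=\pm\sigma(\zeta)$ with $\mathrm{ord}(\sigma(\zeta))=\mathrm{ord}(\zeta)$, so that choosing $m$ even and divisible by $\mathrm{ord}(\zeta)$ gives $(\zeta')^m=1$ and hence $\alpha_i^m=q^{m/2}$ for every root -- state it that way to avoid the appearance of a gap. Second, the existence of a supersingular $E/\mathbb{F}_{q^m}$ with $\pi_E=q^{m/2}$ is cleanest via Honda--Tate (or Waterhouse's classification) applied to the real Weil $q^m$-number $q^{m/2}$, which is an integer because $m$ is even; the ``base change plus quadratic twist'' phrasing works but should specify that the twist is used to flip the sign of the trace when the base change produces $-q^{m/2}$. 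With those clarifications the proof is complete modulo the standard inputs you already identify.
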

We call a Weil-$q$ number $\pi$,  a supersingular Weil -$q$-number if $\pi_A=\sqrt{q}\zeta$, where $\zeta$ is some root of unity.  

Our approach is first we will compute all  supersingular  Weil -$q$-numbers (polynomials). Then we will find the dimension of the corresponding abelian varieties to those polynomials. 

\section{Supersingular Weil Polynomial}
In this section we give results and procedure for computing the supersingular Weil Polynomial. Let $P(X)$ be a Weil polynomial given by 
\begin{displaymath}
P(X) = X^{2g} + a_{1}X^{2g-1} + \cdots+ a_gX^g +qa_{g-1}X^{g-1} \cdots + q^g.
\end{displaymath}
We have following cases.
\subsection{$P(X)$ Irreducible}
\begin{proposition}\label{S2}
If $P(X)$ is irreducible with $a_{2i+1}\neq 0$ for some $i$, then $a_i$'s satisfy\\
\begin{displaymath}
H(t):=(t^{2g}+\frac{a_{2}t^{2g-2}}{q}+\cdots+\frac{a_{2i}t^{2g-2i}}{q^i}+\cdots+1)^2- \frac{1}{q}(a_1t^{2g-1}+\frac{a_3t^{2g-3}}{q}+\cdots+a_1t)^2=0.
\end{displaymath}
\end{proposition}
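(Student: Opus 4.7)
The plan is to exploit the Manin-Oort characterization of supersingularity and reduce the problem to one about roots of unity. By that theorem every root of $P(X)$ has the form $\alpha=\sqrt{q}\,\zeta$ with $\zeta$ a root of unity, so substituting $X=\sqrt{q}\,t$ and rescaling yields a polynomial $Q(t):=q^{-g}P(\sqrt{q}\,t)$ whose roots are precisely these $\zeta$.

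The first substantive step is to expand $Q(t)$ coefficient by coefficient, using the palindromic Weil-polynomial form of $P$. The monomial $a_k X^{2g-k}$ with $k\le g$ contributes $a_k q^{-k/2} t^{2g-k}$, while the symmetric monomial $q^{g-k}a_k X^{k}$ contributes $a_k q^{-k/2} t^{k}$. Grouping by the parity of $k$, the even-indexed $a_{2j}$ acquire rational multipliers $q^{-j}$ and sit at even powers of $t$, while the odd-indexed $a_{2j+1}$ acquire multipliers $q^{-j-1/2}$ and sit at odd powers of $t$. This gives the decomposition
\[
Q(t) \;=\; E(t) \;+\; \frac{1}{\sqrt{q}}\,O(t),
\]
where $E(t)$ and $O(t)$ are precisely the two polynomials appearing inside the squares in $H(t)$, and both lie in $\mathbb{Q}[t]$.

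The last step is then a one-line algebraic identity: from $Q(\zeta)=0$ one gets $\sqrt{q}\,E(\zeta)+O(\zeta)=0$, and squaring eliminates $\sqrt{q}$ to yield $qE(\zeta)^{2}=O(\zeta)^{2}$, i.e.\ $H(\zeta)=0$. The hypothesis $a_{2i+1}\ne 0$ for some $i$ ensures $O(t)\not\equiv 0$, so that this is a genuine, non-trivial constraint; without it, $Q(t)=E(t)$ already has rational coefficients and one reaches the simpler conclusion $E(\zeta)=0$ directly. The only real obstacle I expect is bookkeeping in the expansion step: correctly matching the parity of each coefficient index to the associated power of $t$ and of $q$ while respecting the functional equation $X^{2g}P(q/X)=q^{g}P(X)$ built into the assumed form of $P$. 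Irreducibility of $P$ plays no role beyond ensuring that $\zeta$ is a full Galois orbit of roots of unity, so that $H(t)=0$ is the minimal relation on the $a_i$ forced by the Weil roots.
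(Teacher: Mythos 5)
Your proof is correct and follows essentially the same route as the paper: substitute $X=\sqrt{q}\,t$, split $q^{-g}P(\sqrt{q}\,t)$ into an even part $E(t)\in\mathbb{Q}[t]$ plus $\frac{1}{\sqrt{q}}$ times an odd part $O(t)\in\mathbb{Q}[t]$, and eliminate $\sqrt{q}$ --- the paper does this by multiplying $G(t)$ by its conjugate $G(t)^{\sigma}$ under $\mathrm{Gal}(\mathbb{Q}(\sqrt{q})/\mathbb{Q})$, which is algebraically identical to your isolate-and-square step. Your appeal to Manin--Oort is unnecessary for this proposition (the identity holds at every root of $P$ after the normalization, whether or not it is a root of unity), but it does no harm.
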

\begin{proof}
By dividing  $P(X)$ by $q^g$ we get,
\begin{displaymath}
\frac{P(X)}{q^g}= \frac{X^{2g}}{q^g}+a_1\frac{X^{2g-1}}{q^g}+ \cdots +a_g \frac{X^g}{q^g}+qa_{2g-1}\frac{{X^{2g-1}}}{q^g}+\cdots+1=0.
\end{displaymath}
Doing the transformation $\frac{X}{\sqrt{q}} \rightarrow t$ over $\mathbb{Q}(\sqrt{q})$  and rearranging we get\\
\begin{displaymath}
G(t):=(t^{2g}+\frac{a_{2}t^{2g-2}}{q}+\cdots+\frac{a_{2i}t^{2g-2i}}{q^i}+\cdots+1)+\frac{1}{\sqrt{q}}(a_1t^{2g-1}+\frac{a_3t^{2g-3}}{q}+\cdots+a_1t)=0.
\end{displaymath}
Since $q=p^n$, $n$ odd,  $Gal(\mathbb{Q}({\sqrt{q}}) / \mathbb{Q})=\{1,\sigma\}$ where $\sigma(\sqrt{q})=-\sqrt{q}$. Therefore

\begin{displaymath}
H(t):=G(t)G(t)^{\sigma}=(t^{2g}+\frac{a_{2}t^{2g-2}}{q}+\cdots+\frac{a_{2i}t^{2g-2i}}{q^i}+\cdots+1)^2-\frac{1}{q}(a_1t^{2g-1}+\frac{a_3t^{2g-3}}{q}+\cdots+a_1t)^2=0.
\end{displaymath}
\end{proof}

The polynomial $H(t)$ defined above has only even powers of $t$.
\begin{proposition} \label{Red}
 Let  $G(t)\in \mathbb{Q}({\sqrt{q}})[t]$ (as defined above) be reducible over $\mathbb{Q}({\sqrt{q}})$. Then  $G(t)=F_{1}(t)F_{2}(t)$  where
$F_{1}, F_{2}$ are irreducible polynomial  in  $\mathbb{Q}({\sqrt{q}})[t]$ with $\deg F_{1},\deg F_{2}=g $.
\end{proposition}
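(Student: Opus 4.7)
The plan is to argue by Galois theory that the only possible nontrivial factorization of $G(t)$ over $\mathbb{Q}(\sqrt{q})$ has two factors, each of degree $g$. The two ingredients are a transfer between roots of $G$ and roots of $P$, followed by a standard index-$2$ Galois-orbit dichotomy.

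First I would identify the roots of $G$. From the substitution $X = t\sqrt{q}$ used in the proof of Proposition~\ref{S2}, the $2g$ roots of $G(t)$ in $\overline{\mathbb{Q}}$ are precisely $\alpha_1/\sqrt{q},\dots,\alpha_{2g}/\sqrt{q}$, where $\alpha_1,\dots,\alpha_{2g}$ are the roots of the (irreducible) Weil polynomial $P(X)$. Set $\Gamma_0 := \mathrm{Gal}(\overline{\mathbb{Q}}/\mathbb{Q})$ and $\Gamma_1 := \mathrm{Gal}(\overline{\mathbb{Q}}/\mathbb{Q}(\sqrt{q}))$. Since every element of $\Gamma_1$ fixes $\sqrt{q}$, multiplication by $\sqrt{q}$ is a $\Gamma_1$-equivariant bijection from the roots of $G$ to the roots of $P$. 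Therefore the $\Gamma_1$-orbit structures on the two root sets coincide, and the irreducible factorization of $G$ over $\mathbb{Q}(\sqrt{q})$ matches that of $P$ over $\mathbb{Q}(\sqrt{q})$ degree for degree; it suffices to study the factorization of $P$.

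Next I would invoke the standard index-$2$ argument for Galois orbits. Because $q = p^n$ with $n$ odd, $\sqrt{q} \notin \mathbb{Q}$, hence $[\mathbb{Q}(\sqrt{q}):\mathbb{Q}] = 2$ and $\Gamma_1$ is a normal index-$2$ subgroup of $\Gamma_0$. Irreducibility of $P$ over $\mathbb{Q}$ says that $\Gamma_0$ acts transitively on $\{\alpha_1,\dots,\alpha_{2g}\}$. A classical group-theoretic fact then applies: a normal index-$2$ subgroup of a transitive permutation group either is itself transitive or splits the set into exactly two orbits of equal size. (Any element of the nontrivial coset permutes the $\Gamma_1$-orbits, and transitivity of $\Gamma_0$ forces this permutation to have a single cycle on the orbit set.) Thus the $\Gamma_1$-orbits on $\{\alpha_i\}$ consist of either one orbit of size $2g$ or two orbits of size $g$ each.

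Transporting this dichotomy back via the bijection from the first step, $G$ either remains irreducible over $\mathbb{Q}(\sqrt{q})$ or factors as a product of two irreducibles of degree $g$. Under the hypothesis that $G$ is reducible, only the second case can occur, yielding $G = F_1 F_2$ with $\deg F_1 = \deg F_2 = g$, as claimed. There is no computational obstacle here; the only real care needed is in making the $\Gamma_1$-equivariant identification of roots precise and in using $n$ odd to guarantee that $\mathbb{Q}(\sqrt{q})/\mathbb{Q}$ is genuinely a quadratic extension, after which the classical index-$2$ orbit lemma does all the work.
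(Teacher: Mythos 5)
Your proof is correct, but it reaches the conclusion through a different key lemma than the paper does. The paper argues factor by factor: writing $G=F_1\cdots F_k$ with each $F_i$ irreducible over $\mathbb{Q}(\sqrt{q})$, it notes that $F_i(\sqrt{q}\,t)F_i(\sqrt{q}\,t)^{\sigma}$ lies in $\mathbb{Q}[t]$ and vanishes at some root $\alpha_j$ of $P$, so minimality of $P$ (of degree $2g$) forces $\deg F_i\geq g$; since the degrees sum to $2g$ and $k\geq 2$, there are exactly two factors, each of degree $g$. You instead transport everything to the root set of $P$ and invoke the index-$2$ orbit dichotomy for the normal subgroup $\mathrm{Gal}(\overline{\mathbb{Q}}/\mathbb{Q}(\sqrt{q}))$ inside $\mathrm{Gal}(\overline{\mathbb{Q}}/\mathbb{Q})$. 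These are two faces of the same fact: the paper's norm-and-degree comparison is the concrete incarnation of your statement that a $\Gamma_1$-orbit has size at least half a $\Gamma_0$-orbit. What your packaging buys is a cleaner structural statement --- in one stroke the number of irreducible factors is shown to divide $2$ and the two factors are exhibited as $\sigma$-conjugates of equal degree --- whereas the paper only bounds each degree from below and lets the degree count finish the argument. Both proofs rest on exactly the two hypotheses you isolate: irreducibility of $P$ over $\mathbb{Q}$ and $[\mathbb{Q}(\sqrt{q}):\mathbb{Q}]=2$, the latter being where $n$ odd enters. The only point worth making explicit in your write-up is that the dictionary between $\Gamma_1$-orbits and irreducible factors requires $G$ to have distinct roots; this holds because $P$ is irreducible in characteristic zero, hence separable, so it is a harmless omission rather than a gap.
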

\begin{proof}
 Let $G(t)=F_{1}(t)F_{2}(t)\ldots F_k(t)$  where $F_i$ are irreducible over  $\mathbb{Q}({\sqrt{q}})[t]$. 
Then $G(\sqrt{q}t)=F_{1}(\sqrt{q}t)F_{2}(\sqrt{q}t)\ldots F_k(\sqrt{q}t)$.
 Let $\sigma \in Gal(\mathbb{Q}({\sqrt{q}}) \setminus \mathbb{Q})$. Then,
 \begin{displaymath}
\frac{P(t)^2}{q^{2g}}=G(\sqrt{q}t)G(\sqrt{q}t)^{\sigma}=F_{1}(\sqrt{q}t)F_{1}(\sqrt{q}t)^{\sigma}F_{2}(\sqrt{q}t)F_{2}(\sqrt{q}t)^{\sigma}\ldots F_{k}(\sqrt{q}t)F_{k}(\sqrt{q}t)^{\sigma}.
\end{displaymath}
If $\deg F_i < g$ then $\deg F_i\deg F_{i}^{\sigma} < 2g$. But $F_{i}F_{i}^{\sigma} \in \mathbb{Q}[t]$ This implies there is polynomial of degree less than $2g$ over $\mathbb{Q}$ with $\alpha_i$, which contradicts the minimality of $P(t)$. Hence
$\deg F_i = g$ whence the theorem follows.
\end{proof}

\begin{theorem}\label{Odd}Assume $a_{2i+1}\neq 0$.
\begin{enumerate}
\item If  $G(t)$ is irreducible over $\mathbb{Q}(\sqrt{q})$ then $H(t)$ is an irreducible cyclotomic factor of $t^m-1$ of degree $4g$ where $\phi(m)=4g$ over $\mathbb{Q}$.
\item If $G(t)$ is reducible over  $\mathbb{Q}(\sqrt{q})$ then $G(t)=F_{1}(t)F_{2}(t)$ where  at least one of $F_{1},~F_{2} \in \mathbb{Q}(\sqrt{q})[t] \setminus \mathbb{Q}[t]$. If $F_i \in \mathbb{Q}[t]$ then it is an irreducible cyclotomic factor of  $t^m-1$ of degree $g$   where $\phi(m)=g$, else  $F_{i}F_{i}^{\sigma} \in \mathbb{Q}[t]$ is an irreducible cyclotomic factor  of $t^m-1$ of degree $2g$ where $\phi(m)=2g$.
\end{enumerate}
\begin{proof}
\begin{enumerate}
\item From the construction, the roots of $H(t)$ are  $\frac{\alpha_i}{\sqrt{q}}$ which are also the roots of unity. But $H(t)$ is irreducible, hence the assertion follows. 
\item Since $a_{2i+1} \neq 0$,  $G(t)\in  \mathbb{Q}(\sqrt{q})[t] \setminus \mathbb{Q}[t]$. Therefore $G=F_1F_2$ (by \ref{Red})implies at least one of them (say $F_1$) is in $\mathbb{Q}(\sqrt{q})[t]\setminus \mathbb{Q}[t]$, in which case $F_1F_{1}^{\sigma}$ is irreducible cyclotomic factor $t^m-1$ of degree $2g$ where $\phi(m)=2g$.
\end{enumerate}
\end{proof}
\end{theorem}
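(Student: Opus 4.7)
The plan is to exploit two facts. First, supersingularity of $P(X)$ means every root $\alpha_i$ has the form $\sqrt{q}\zeta_i$ for some root of unity $\zeta_i$, so every root of $G$ (and hence of $H$) is a root of unity. Second, the hypothesis $a_{2i+1}\neq 0$ forces $G$ to have genuine $\sqrt{q}$-content, so $G\notin\mathbb{Q}[t]$ and consequently $G^{\sigma}\neq G$.

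For part (1), I would begin by noting that the roots of $H=GG^{\sigma}$ are the $2g$ numbers $\alpha_i/\sqrt{q}$ together with their $\sigma$-images $-\alpha_i/\sqrt{q}$, all roots of unity. It then suffices to prove $H$ is irreducible over $\mathbb{Q}$, since any irreducible rational polynomial whose roots are roots of unity must be a cyclotomic polynomial $\Phi_m(t)$, and the degree condition $\phi(m)=4g$ is automatic. The irreducibility step I would handle by Galois descent: a hypothetical factorization $H=H_1H_2$ in $\mathbb{Q}[t]$ with both factors nontrivial would lift to a factorization in $\mathbb{Q}(\sqrt{q})[t]$, where the only available irreducible factors of $H$ are $G$ and $G^{\sigma}$. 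Each $H_j$ would thus have to be a $\sigma$-stable product of these; since $G\neq G^{\sigma}$, the only $\sigma$-stable products are the trivial one and $GG^{\sigma}=H$ itself, contradicting $0<\deg H_j<\deg H$.

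For part (2), Proposition \ref{Red} gives $G=F_1F_2$ with $\deg F_1=\deg F_2=g$. If both $F_j$ were in $\mathbb{Q}[t]$, then $G\in\mathbb{Q}[t]$, which contradicts $a_{2i+1}\neq 0$, so at least one $F_j$ lies in $\mathbb{Q}(\sqrt{q})[t]\setminus\mathbb{Q}[t]$. For a factor $F_i\in\mathbb{Q}[t]$, it is already irreducible over $\mathbb{Q}(\sqrt{q})$ and hence over $\mathbb{Q}$; its roots are roots of unity, so $F_i=\Phi_m$ with $\phi(m)=g$. For a factor $F_i\notin\mathbb{Q}[t]$, we have $F_i^{\sigma}\neq F_i$ and the Galois-descent argument from part (1) applies verbatim to $F_iF_i^{\sigma}$, showing it is irreducible of degree $2g$ over $\mathbb{Q}$ with roots that are roots of unity, hence an irreducible cyclotomic factor of $t^m-1$ with $\phi(m)=2g$.

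The main obstacle is the irreducibility step, which in both parts reduces to the same Galois-descent observation: if $F$ is irreducible over $\mathbb{Q}(\sqrt{q})$ with $F\neq F^{\sigma}$, then $FF^{\sigma}$ admits no nontrivial factorization over $\mathbb{Q}$. Once this is in place, the conclusion that the resulting rational irreducible polynomial is cyclotomic follows immediately from supersingularity.
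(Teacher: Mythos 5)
Your proposal is correct and follows essentially the same route as the paper: reduce everything to the observation that the roots are roots of unity (via Manin--Oort) plus irreducibility over $\mathbb{Q}$, with $a_{2i+1}\neq 0$ forcing $G\neq G^{\sigma}$. The only difference is that you explicitly supply the Galois-descent/unique-factorization argument showing $GG^{\sigma}$ (resp.\ $F_iF_i^{\sigma}$) is irreducible over $\mathbb{Q}$, a step the paper merely asserts; your parenthetical identification of the $\sigma$-images of the roots as $-\alpha_i/\sqrt{q}$ is slightly imprecise (they are Galois conjugates, hence still roots of unity) but harmless to the argument.
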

\begin{proposition}\label{Even}
If $a_{2i+1}=0$ for all $i$ then
\begin{enumerate}
\item If $G(t)$ is irreducible over $\mathbb{Q}$, it is an irreducible cyclotomic factor of $t^m-1$  of degree $g$ where $\phi(m)=2g$.
\item If $G(t)$ is reducible over  $\mathbb{Q}$, then $G(t)=F_1F_2$ where $F_i$ are irreducible cyclotomic factor  of $t^m-1$ of degree $g$ where $\phi(m)=g$.
\end{enumerate}
\end{proposition}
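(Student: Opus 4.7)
The plan is to use the fact that the vanishing of all odd-indexed coefficients collapses $G(t)$ into $\mathbb{Q}[t]$, after which the Manin--Oort supersingular hypothesis (forcing the roots of $G$ to be roots of unity) and the irreducibility of $P$ together pin down every $\mathbb{Q}$-irreducible factor of $G$ as a cyclotomic polynomial of the correct degree.

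First I would observe that when $a_{2i+1}=0$ for all $i$, the $\frac{1}{\sqrt{q}}$-summand in the definition of $G(t)$ from Proposition~\ref{S2} vanishes identically, so
\[
G(t)=t^{2g}+\tfrac{a_{2}}{q}t^{2g-2}+\cdots+\tfrac{a_{2g-2}}{q^{g-1}}t^{2}+1\in\mathbb{Q}[t]
\]
is a polynomial of degree $2g$ involving only even powers of $t$. Because $G(t)=q^{-g}P(\sqrt{q}\,t)$ by construction, its roots are precisely the $\alpha_i/\sqrt{q}$ where $\alpha_i$ runs over the roots of $P$; supersingularity gives $\alpha_i=\sqrt{q}\,\zeta_i$ with $\zeta_i$ a root of unity, so every root of $G$ is a root of unity, and hence every $\mathbb{Q}$-irreducible factor of $G$ must be a cyclotomic polynomial $\Phi_m$.

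Part~(1) is then immediate: if $G$ is itself $\mathbb{Q}$-irreducible, then $G=\Phi_m$ and equating degrees yields $\phi(m)=2g$. For part~(2), I would argue in the spirit of Proposition~\ref{Red}. Write $G=F_{1}F_{2}\cdots F_{k}$ with each $F_i=\Phi_{m_i}$ irreducible over $\mathbb{Q}$, and fix a primitive $m_i$-th root of unity $\zeta$. Then $\sqrt{q}\,\zeta$ is a root of $P$, and since we are in the subsection where $P$ is $\mathbb{Q}$-irreducible of degree $2g$, we have $[\mathbb{Q}(\sqrt{q}\,\zeta):\mathbb{Q}]=2g$. The tower
\[
2g\;=\;[\mathbb{Q}(\sqrt{q}\,\zeta):\mathbb{Q}]\;\leq\;[\mathbb{Q}(\sqrt{q},\zeta):\mathbb{Q}]\;\leq\;2\,\phi(m_i)=2\deg F_i
\]
then forces $\deg F_i\geq g$ for every $i$ (using that $n$ odd makes $[\mathbb{Q}(\sqrt{q}):\mathbb{Q}]=2$). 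Combined with $\sum_i\deg F_i=\deg G=2g$, this collapses the factorisation to $k=2$ with $\deg F_1=\deg F_2=g$, so each $F_i=\Phi_{m_i}$ with $\phi(m_i)=g$, as claimed.

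The main obstacle is the lower bound $\deg F_i\geq g$; the rest of the argument is just a degree tally. That bound relies on two ingredients which must both be cited explicitly: the $\mathbb{Q}$-irreducibility of $P$ (so that any root of $P$ generates a degree-$2g$ extension of $\mathbb{Q}$), and the compositum estimate $[\mathbb{Q}(\sqrt{q},\zeta_m):\mathbb{Q}]\leq 2\phi(m)$, which depends on $\sqrt{q}$ being irrational---precisely the content of the standing hypothesis that $n$ is odd.
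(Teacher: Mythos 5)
Your proof is correct and follows essentially the same path as the paper, whose own proof is just the remark that it is ``similar to the previous theorem'': once $a_{2i+1}=0$ forces $G\in\mathbb{Q}[t]$, supersingularity makes every root of $G$ a root of unity, so every $\mathbb{Q}$-irreducible factor is a cyclotomic polynomial, and the $\mathbb{Q}$-irreducibility of $P$ supplies the lower bound $\deg F_i\ge g$ that pins down the factorisation. The one place you diverge is the mechanism for that lower bound: the paper's template (Proposition~\ref{Red}) would observe that $F_i(X/\sqrt{q})\,F_i(X/\sqrt{q})^{\sigma}$ is a polynomial over $\mathbb{Q}$ of degree $2\deg F_i$ vanishing at a root of the degree-$2g$ minimal polynomial $P$, whereas you squeeze $[\mathbb{Q}(\sqrt{q}\,\zeta):\mathbb{Q}]=2g$ between $[\mathbb{Q}(\sqrt{q},\zeta):\mathbb{Q}]$ and the compositum bound $2\phi(m_i)$; the two arguments are interchangeable and both rest on the same two hypotheses you correctly isolate (irreducibility of $P$ and $n$ odd). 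One cosmetic remark: in part~(1) your conclusion is that $G=\Phi_m$ has degree $2g=\phi(m)$, which is what the paper actually uses in its applications (e.g.\ ``degree $4$ where $\phi(m)=4$'' for $g=2$); the statement's ``degree $g$'' there is a typo, and your version is the right one.
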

\begin{proof}
The proof is similiar to the proof of the previous theorem.
\end{proof}
\subsection{$P(X)$ Reducible}
If $P(X)$ is reducible then $P(X)=h(X)^e$ which implies $e|2g$. Also if $h(X)=\sum h_iX^i$ then $h_{0}^e=q^g=p^{ng}$ which implies $e|ng$.  But since $n$ is odd we have $e|g$. Therefore $\frac{P(X)}{g}=(\frac{h(X)}{q^{\frac{g}{e}}})^e$. Doing the transformation $\frac{X}{\sqrt{q}} \rightarrow t$ we get $G(t)$ as in proposition \ref{S2}. The same results on $G(t)$ from the above section holds depending on whether $G(t)$ is irreducible or reducible except that $g$ is replaced by $\frac{g}{e}$.\\

Using the above propositions we have the following procedure to derive the Weil polynomial.\\
\subsection{Procedure}
\begin{enumerate}
\item Suppose $P(X)$ is irreducible.
\begin{enumerate}
\item If $a_{2i+1}\neq 0$ for some $i$.
\begin{enumerate}
\item If $G(t)$ is irreducible, then as  in proposition \ref{Odd} find  appropriate $H(t)$ and solve for the $a_i$'s. 
\item If $G(t)$ is reducible then use proposition \ref{Odd}  to find appropriate  $H(t)=G(t)G(t)^{\sigma}$ and solve for the $a_i$'s .
\end{enumerate}

\item If  $a_{2i+1}= 0$ for all $i$.
 \begin{enumerate}
\item If $G(t)$ is irreducible over $\mathbb{Q}$ then use proposition \ref{Even} to find appropriate cyclotomic factors and compare with $G(t)$ to solve for $a_i$'s.
\item If $G(t)$ is reducible over $\mathbb{Q}$ then use proposition \ref{Even} to compute $G(t)=F_1(t)F_2(t)$ and compare coefficients to solve for $a_i$'s.
\end{enumerate}
\end{enumerate}
\item Suppose $P(X)$ is reducible i.e;  $P(X)=h(X)^e$ with $e|g$.  Then for  $\frac{h(X)}{q^{\frac{g}{e}}}$, doing the transformation $\frac{X}{\sqrt{q}}\rightarrow t$, we get a new polynomial $G(t)$  whose roots are roots of unity, and using above tools we can find $h(X)$.
\end{enumerate}

 Often in the above procedure, solutions to  the $a_i$'s are given by the roots of some polynomials $f(z,q)$ over $\mathbb{Z}$. We use following test to check if $f(z,q)$ has  an integer solution.

\begin{lemma}(Mod $3, 5$  test)
Let $f(z)$ be a monic polynomial of degree $d$ with coefficients in $\mathbb{Z}[q]$ where $q =p^n$ such that if we put
\begin{enumerate}
 \item $q=1,2  $ and $f(z)\mod 3$ has no solutions,
 \item $q=-1,1 $ and $f(z)\mod 5$ has no solutions.
\end{enumerate}
then $f(z)$ has no solutions in $\mathbb{Z}$ for any $q$.
\end{lemma}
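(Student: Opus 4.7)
The plan is to argue by contradiction via reduction modulo $3$ and $5$. Suppose, for some integer $z_0$ and some $q = p^n$ with $n$ odd, that $f(z_0,q) = 0$. Since the coefficients of $f$ lie in $\mathbb{Z}[q]$, this integer identity descends modulo any $m$ in the form
\[
f(z_0 \bmod m,\; q \bmod m) \equiv 0 \pmod m,
\]
so the task reduces to enumerating the possible residues of $q = p^n \bmod 3$ and $\bmod 5$ and ruling each one out by the corresponding hypothesis.

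The arithmetic input is Fermat's little theorem. Because $(\mathbb{Z}/3\mathbb{Z})^{\times}$ has order $2$, for any $p \neq 3$ we have $p^2 \equiv 1 \pmod 3$, and oddness of $n$ telescopes $q = p\cdot(p^2)^{(n-1)/2}$ to give $q \equiv p \pmod 3 \in \{1,2\}$; hypothesis (1) then contradicts $f(z_0,q) \equiv 0 \pmod 3$. Similarly, $(\mathbb{Z}/5\mathbb{Z})^{\times}$ has order $4$ with subgroup of squares $\{1,-1\}$, so for $n$ odd the residue $q = p^n$ lies in the coset $p \cdot \{1,-1\} \pmod 5$; whenever $p \equiv \pm 1 \pmod 5$ this forces $q \equiv \pm 1 \pmod 5$, which is the class covered by hypothesis (2).

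The main obstacle I foresee is the bookkeeping step: one must check that, between the two modular tests, every prime $p$ appearing in the enumeration of supersingular Weil polynomials is actually eliminated. The mod-$3$ test handles every $p \neq 3$, and the mod-$5$ test handles every $p \equiv \pm 1 \pmod 5$; the remaining primes --- notably $p = 3$ when $3 \not\equiv \pm 1 \pmod 5$ --- need a short direct check consistent with the rest of the paper's enumeration. Once this case analysis is in place, each hypothesis yields a direct contradiction with $f(z_0,q) = 0$ in its respective residue range, and the lemma follows. The non-obstacles are the telescoping identity $p^n \equiv p \cdot (p^2)^{(n-1)/2} \pmod m$ and the passage from an integer equation to its mod-$m$ reduction; neither requires more than a line.
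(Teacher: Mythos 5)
Your reduction strategy is exactly the one the paper uses, and your modular bookkeeping is correct: reducing an integer root mod $3$ shows that hypothesis (1) disposes of every $p \neq 3$ (since $q = p^n \equiv p \equiv 1$ or $2 \pmod 3$ for $n$ odd), and reducing mod $5$ shows that hypothesis (2) disposes of those $p$ with $p \equiv \pm 1 \pmod 5$ (since $q \equiv \pm p \pmod 5$ for $n$ odd). But the ``obstacle'' you flag at $p=3$ is not a loose end awaiting a routine check --- it is a genuine hole, and it is present in the paper's own proof as well. The paper asserts that if $q \equiv 0 \pmod 3$ then $q \equiv \pm 1 \pmod 5$; this is false for $n$ odd, since $3^n = 3\cdot 9^{(n-1)/2} \equiv \pm 3 \equiv \pm 2 \pmod 5$. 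Consequently the case $p = 3$ is covered by neither hypothesis, and the lemma as stated is actually false: the monic polynomial $f(z,q) = z^2 + q^2 - 9$ reduces to $z^2+1 \pmod 3$ at $q = 1, 2$ and to $z^2+2 \pmod 5$ at $q = \pm 1$, so it passes both tests, yet $f(0,3) = 0$ with $q = 3 = 3^1$ a legitimate odd prime power.

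So the right conclusion is not that your write-up is missing ``a short direct check,'' but that the hypotheses must be strengthened before any proof can close. The minimal repair is to require in addition that $f(z) \bmod 5$ has no solutions for $q = \pm 2$; with that, your argument goes through verbatim (mod $3$ eliminates every $p \neq 3$, and mod $5$ with $q \in \{\pm 1, \pm 2\}$ eliminates every $p \neq 5$, in particular $p = 3$). Alternatively one can verify the $p=3$ case separately in each application of the lemma. Either way, you should state the fix explicitly rather than defer it: as written, neither your proposal nor the paper's own proof establishes the lemma, and your instinct that $p=3$ is the problematic prime is precisely correct.
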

\begin{proof}
If $a$ is an integer solution then $z-a\mid f(z)$.
Let $q$ be power of prime. If $q\equiv  1 \mod 3, ~q\equiv 2\mod 3$ then  $a \mod 3$ is a  solution.
 If $q = 0 \mod 3$ then  $q=-1$ or $1~~  \mod 5$ and $a \mod 5$ is the solution for $f(z) \mod 5$.
\end{proof}
In the next section we will calculate the dimension of the corresponding abelian variety to the supersingular Weil polynomial.
\section{Dimension}
Let $ \pi \in \bar{\mathbb{Q}}$ be the Weil number and  $P(X)$ be minimal Weil polynomial, with $\pi$ obtained by method above .We have the following theorem to calculate the dimension.
\begin{theorem}\label{dim} Let A be a simple abelian variety over $k = \mathbb{F}_q$ ,then
\begin{enumerate}
\item $End_k (A)\otimes \mathbb{Q}$ is a division algebra with center $\mathbb{Q}(\pi_A)$ and
                 
\begin{displaymath}
2dim A = [End_k (A)\otimes \mathbb{Q}:\mathbb{Q}(\pi_A)]^\frac{1}{2} [\mathbb{Q}(\pi_A) : \mathbb{Q}] .
\end{displaymath}
\item The division algebra $End_k (A)\otimes \mathbb{Q}$ over $\mathbb{Q}(\pi_A)$ has the following splitting
behaviour\\
\begin{enumerate}
\item it splits at each divisor $\mathfrak{l}$ of $l$ in $\mathbb{Q}(\pi_A)$, if $l\neq p$,
\item the invariants at the divisors  $\mathfrak{p}$ of p in  $\mathbb{Q}(\pi_A)$ can be evaluated with
\begin{displaymath}                                       
  inv_{\mathfrak{p}} (End_k (A)\otimes \mathbb{Q}) \equiv \frac{v_\mathfrak{p}(\pi_A)}{v_\mathfrak{p}(q)} [\mathbb{Q}(\pi_A)_\mathfrak{p} : \mathbb{Q}_p ] \mod \mathbb{Z} ,
 \end{displaymath}                                       
\item it does not split at the real places of $\mathbb{Q}(\pi_A)$.
\end{enumerate}
\end{enumerate}
\end{theorem}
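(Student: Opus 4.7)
The plan is to reduce both parts to Tate's theorem on endomorphisms of abelian varieties over finite fields, combined with the standard local-global analysis of central simple algebras over number fields.

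For part (1), I would first argue that $\mathrm{End}_k(A)\otimes\mathbb{Q}$ is a division algebra: any nonzero endomorphism of a simple abelian variety is an isogeny, hence invertible in $\mathrm{End}_k(A)\otimes\mathbb{Q}$. To identify the center, note that the Frobenius $\pi_A$ lies in the center because on the Tate module $V_\ell A$ it acts as a scalar on each Galois-isotypic component and, more directly, because Frobenius commutes with every $k$-morphism. To see that $\mathbb{Q}(\pi_A)$ is the whole center, I would invoke Tate's isogeny theorem: the natural map
\begin{displaymath}
\mathrm{End}_k(A)\otimes\mathbb{Q}_\ell \longrightarrow \mathrm{End}_{\mathbb{Q}_\ell[\pi_A]}(V_\ell A)
\end{displaymath}
is an isomorphism. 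Since $V_\ell A$ is a free $\mathbb{Q}_\ell[\pi_A]$-module (the minimal polynomial of $\pi_A$ on $V_\ell A$ equals its characteristic polynomial up to the exponent $e$ accounting for the reducible case), the right-hand side is a matrix algebra over $\mathbb{Q}_\ell[\pi_A]$, whose center is exactly $\mathbb{Q}_\ell[\pi_A]=\mathbb{Q}(\pi_A)\otimes\mathbb{Q}_\ell$. Comparing $\mathbb{Q}_\ell$-dimensions on both sides gives $\dim_{\mathbb{Q}_\ell}V_\ell A = 2\dim A$ on one hand and $[D:\mathbb{Q}(\pi_A)]^{1/2}\cdot[\mathbb{Q}(\pi_A):\mathbb{Q}]$ on the other, where $D=\mathrm{End}_k(A)\otimes\mathbb{Q}$, yielding the stated formula.

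For part (2), the splitting data of $D$ is computed place by place in $F:=\mathbb{Q}(\pi_A)$. At a finite place $\mathfrak{l}\nmid p$, the Tate-module argument above shows $D\otimes_F F_\mathfrak{l}$ is a matrix algebra over $F_\mathfrak{l}$, so $\mathrm{inv}_\mathfrak{l}(D)=0$. At a place $\mathfrak{p}\mid p$, one replaces $V_\ell A$ by the Dieudonné module (or covariant $F$-isocrystal) $M(A)$, on which Frobenius again acts through $\pi_A$; the slope of the isocrystal component corresponding to $\mathfrak{p}$ is precisely $v_\mathfrak{p}(\pi_A)/v_\mathfrak{p}(q)$, and the local invariant of the endomorphism algebra of a simple isocrystal equals its slope times the local degree, giving the formula
\begin{displaymath}
\mathrm{inv}_\mathfrak{p}(D)\equiv\frac{v_\mathfrak{p}(\pi_A)}{v_\mathfrak{p}(q)}\,[F_\mathfrak{p}:\mathbb{Q}_p]\pmod{\mathbb{Z}}.
\end{displaymath}
At a real place, the existence of the Rosati involution, which is positive and acts on $F$ as complex conjugation, forces $D\otimes_F\mathbb{R}$ to be a non-split quaternion algebra, i.e.\ $\mathrm{inv}=1/2$.

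The main obstacle is the $\mathfrak{p}$-adic computation: everywhere else (the division-algebra property, the center, the $\ell\neq p$ splitting, and the archimedean obstruction from positivity of Rosati) is essentially formal once Tate's isogeny theorem is granted, but locating the correct slope-invariant correspondence requires a careful passage from the $\ell$-adic to the crystalline side. I would handle this by working with the Dieudonné module in its isocrystal form, decomposing it according to the primes of $F$ above $p$, and matching Newton slopes to Hasse invariants via the classification of simple isocrystals over $\overline{\mathbb{F}}_q$.
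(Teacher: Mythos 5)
The paper does not prove this theorem at all: it is stated as imported background, being Tate's classical description of the endomorphism algebra of a simple abelian variety over a finite field (see \cite{J.Tate}, \cite{Water-Milne}), and the text immediately proceeds to explain how to \emph{use} the invariants. So there is no in-paper argument to compare yours against; your sketch follows the standard route of Tate's expos\'e, and the division of labour you propose (simplicity gives the division-algebra property; the isomorphism $\mathrm{End}_k(A)\otimes\mathbb{Q}_\ell\cong\mathrm{End}_{\mathbb{Q}_\ell[\pi_A]}(V_\ell A)$ handles the center and the splitting away from $p$; the Dieudonn\'e module/isocrystal slope computation handles the places above $p$) is the correct one. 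One small tightening in part (1): as written you compare the dimension of $V_\ell A$ with the dimension of an endomorphism algebra, which are different quantities. What you need is that $V_\ell A$ is free of rank $e$ over $\mathbb{Q}_\ell[\pi_A]$, where $P_A=h^e$, giving $2\dim A = e\,[\mathbb{Q}(\pi_A):\mathbb{Q}]$, and separately that $D\otimes_{\mathbb{Q}}\mathbb{Q}_\ell\cong\prod_\lambda M_e(F_\lambda)$ gives $[D:\mathbb{Q}(\pi_A)]^{1/2}=e$.

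The one genuine gap is 2(c). The claim that positivity of the Rosati involution ``forces $D\otimes_F\mathbb{R}$ to be a non-split quaternion algebra'' is not valid as stated: both $\mathbb{R}$ and $M_2(\mathbb{R})$ carry positive involutions (the identity and transpose, respectively), so positivity alone rules out nothing at a real place. A real place exists only when $\pi_A=\pm\sqrt{q}$, and the standard completion of the argument goes through the reciprocity law $\sum_v \mathrm{inv}_v(D)=0$ combined with the finite invariants you have already computed. If $q$ is a square, $F=\mathbb{Q}$ and $\mathrm{inv}_p=\tfrac12$, so $\mathrm{inv}_\infty=\tfrac12$ by reciprocity. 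If $q$ is not a square (the case relevant to this paper, since $n$ is odd), $F=\mathbb{Q}(\sqrt{p})$, all finite invariants vanish, and reciprocity forces the two real invariants to be equal; one must then separately exclude $D=F$, e.g.\ because $D=F$ would make $A$ an elliptic curve whose endomorphism algebra is a real quadratic field, contradicting the positive-definiteness of the degree form. You should supply one of these arguments (or Tate's original one via base change to $\mathbb{F}_{q^2}$) in place of the appeal to positivity.
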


The invariants of $End_k (A)\otimes \mathbb{Q}$ lie in $\mathbb{Q}/\mathbb{Z}$. They can be evaluated from the
minimal polynomial $P(X)$ of $\pi_A$ as follows.
The only real Weil numbers are $q^{1/2}$ and -$q^{1/2}$ , so there are hardly
any real places of $\mathbb{Q}(\pi_A)$. We consider the polynomial $P(X)$ in $\mathbb{Q}_p [X]$, i.e., over the p-adic numbers. Let
                                      
\begin{displaymath}
P (X) = \displaystyle\prod_{i}f_i (X)
\end{displaymath}                                      
be the decomposition in irreducible factors in $\mathbb{Q}_p [X]$. The factors $f_i(X)$ correspond uniquely to the divisors $\mathfrak{p}_i$ of $p$ in $\mathbb{Q}(\pi_A )$. 
So to get the invariants we have the factor $P(X)$ over $\mathbb{Q}_p$. In fact,
\begin{displaymath}
inv_{\mathfrak{p}_i} (End_k (A)\otimes \mathbb{Q}) \equiv \frac{v_p (f_i (0))}{ v_p (q)}  \mod \mathbb{Z}.
\end{displaymath}
We use the invariants in order to evaluate the dimension of A as follows.
 The number $[End_k (A)\otimes \mathbb{Q}:\mathbb{Q}(\pi_A)]^\frac{1}{2}$ is equal to the order of
$End_k (A)\otimes \mathbb{Q}$ in the Brauer group of $\mathbb{Q}(\pi_A)$ see  theorem 18.6, \cite{Pierce},
which in turn is equal to the least common multiple of the orders of all the local invariants in $\mathbb{Q}/\mathbb{Z}$ see theorem 18.5, \cite{Pierce}. This along with theorem \ref{dim} gives the dimension of $A$.

Hence the main problem in computing dimension is the factorization the $P(X)$ over p-adic numbers for which we will use the following result.
\begin{theorem}\label{padics}
Let $\Phi_n$ be nth cyclotomic polynomial in $\mathbb{Q}_p$. Then 
\begin{enumerate}
\item If $n=p^n$, then  $\Phi_n$ remains irreducible in $\mathbb{Q}_p$.
\item If $(n,p)=1$, then $\Phi_n=f_i\ldots f_r$, with $rf=\phi(n),~ \deg f_i=f$  for each $i$,  where $f$ is  the multiplicative order of $p \mod n$.
\end{enumerate}
\end{theorem}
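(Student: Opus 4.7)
The plan is to translate the statement into one about the local field $\mathbb{Q}_p(\zeta_n)$, so that the factorization of $\Phi_n$ over $\mathbb{Q}_p$ can be read off from the local degree $[\mathbb{Q}_p(\zeta_n):\mathbb{Q}_p]$ together with the identity $\deg \Phi_n = \phi(n)$. In both cases the irreducible factors of $\Phi_n$ over $\mathbb{Q}_p$ are precisely the minimal polynomials over $\mathbb{Q}_p$ of the primitive $n$th roots of unity, so determining the local degree of one such root is enough.

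For part (1), reading the exponent as $n = p^k$ for some $k \ge 1$, I would show that $\mathbb{Q}_p(\zeta_{p^k})/\mathbb{Q}_p$ is totally ramified of degree $\phi(p^k)$, which already matches $\deg \Phi_{p^k}$ and therefore forces irreducibility. The standard device is the substitution $X = Y+1$ in $\Phi_{p^k}(X) = (X^{p^k}-1)/(X^{p^{k-1}}-1)$: a binomial-theorem computation, together with an induction on $k$, shows that $\Phi_{p^k}(Y+1)$ is Eisenstein at $p$, i.e.\ monic of degree $\phi(p^k)$ with all lower coefficients divisible by $p$ and constant term $\Phi_{p^k}(1) = p$. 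Eisenstein's criterion, which is valid over $\mathbb{Q}_p$, then yields the irreducibility.

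For part (2), the key step is to show that when $(n,p)=1$ the field $L := \mathbb{Q}_p(\zeta_n)$ is the unramified extension of $\mathbb{Q}_p$ of degree $f$, where $f$ is the multiplicative order of $p$ modulo $n$. The polynomial $X^n-1$ is separable modulo $p$ (its derivative $nX^{n-1}$ is a unit at each root), so Hensel's lemma gives a bijection between the $n$th roots of unity in $\overline{\mathbb{Q}}_p$ and in $\overline{\mathbb{F}}_p$. Thus $L/\mathbb{Q}_p$ is unramified with residue field $\mathbb{F}_p(\overline{\zeta}_n) = \mathbb{F}_{p^f}$, since by definition $f$ is the smallest positive integer with $p^f \equiv 1 \pmod{n}$. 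Hence $[L:\mathbb{Q}_p] = f$, and the minimal polynomial over $\mathbb{Q}_p$ of any primitive $n$th root of unity has degree $f$. A degree count then produces exactly $r = \phi(n)/f$ irreducible factors of $\Phi_n$, each of degree $f$.

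The main obstacle is the unramifiedness argument in part (2): one has to identify the residue field of $L$ with $\mathbb{F}_{p^f}$ and recognise that the Frobenius acts on $\zeta_n$ by $\zeta_n \mapsto \zeta_n^p$, whose order on $\langle \zeta_n \rangle$ is precisely the order of $p$ modulo $n$. Once this local degree $f$ has been pinned down, the rest of both parts is a clean counting argument using $\deg \Phi_n = \phi(n)$.
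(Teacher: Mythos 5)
Your argument is correct and complete: the Eisenstein shift $X=Y+1$ handles the totally ramified case $n=p^k$, and the Hensel/residue-field argument correctly identifies $\mathbb{Q}_p(\zeta_n)$ as the unramified extension of degree $f=\mathrm{ord}_n(p)$ when $(n,p)=1$, from which the factorization into $\phi(n)/f$ factors of degree $f$ follows by the degree count. The paper gives no proof of this theorem at all, merely citing Serre's \emph{Local Fields}, Chapter IV.4; what you have written is precisely the standard argument contained in that reference, so there is nothing to compare beyond noting that you have filled in the details the paper omits.
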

\begin{proof}
See  \cite{Serre}, chapter IV.4.
\end{proof}
\section{Dimension 1}
The characteristic polynomial of Frobenius of a  dimension $1$ abelian variety is given by $P(X)=X^2+a_{1}X+q$.\\
\emph{Case $a_1=0$}: If $a_1=0$ then P(X)=$X^2+q$ is Frobenius of supersingular abelian variety for all $p$.\\
\emph{Case $a_1 \neq 0$}: If $a_1 \neq 0$ doing transformation $x=\frac{X}{\sqrt{q}}$ we get,\\
\emph{Case 1:} If $G(x)$ is irreducible then by proposition \ref{S2}
$H(x):=G(x)G(x)^{\sigma}=x^4+1+(2-\frac{a_{1}^{2}}{q})x^2$ with $x$ as $m^{th}$ root of unity where $\phi(m)=4.g=4$ which implies $m =\{5,~8,~10,~12$\}.\\
We have,
\begin{enumerate}
\item $x^5-1= \left( x-1 \right) \left( {x}^{4}+{x}^{3}+{x}^{2}+x +1\right)$ .
\item $x^8-1=  \left( x-1 \right) \left( 1+x \right)  \left( 1+{x}^{2} \right)\left( {x}^{4}+1 \right)$ .
\item $x^{10}-1=\left( x-1 \right)\left( 1+x \right)  \left( 1+{x}^{4}+{x}^{3}+{x}^{2}+x \right)\left(1-x+{x}^{2}-{x}^{3}+{x}^{4} \right)$.
\item $x^{12}-1=\left( x-1 \right) \left( 1+x \right)  \left( 1+{x}^{2}+x \right)\left( 1-x+{x}^{2} \right) \left( 1+{x}^{2} \right) \left( {x}^{4}-{x}^{2}+1 \right)$.
\end{enumerate}
Since $H(x)$ has only even powers of $x$ the only possibilities  are  $H(x)= x^{4}+1 $ or $H(x)=x^{4}-x^{2}+1$. Comparing the coefficients we get $a_{1}=\pm \sqrt{2q}$ or $\pm \sqrt{3q}$ which is an integer if and only if  $q$ is an odd power of $2$ and $3$ respectively. Therefore $a_{1}=\pm \sqrt{2q}$ or $\pm \sqrt{3q}$.\\
\emph{Case 2.} $G(x)$ is reducible. Then  by \ref{Red} $G(x)=F_1(x).F_2(x)$.
The roots of $F_i$  are mth root of unity  where $\phi(m)=2g=2$  if $F_i \in \mathbb{Q}({\sqrt{q}})[x]\setminus \mathbb{Q}[x]$ else $\phi(m)=g=1$. 
Then $H(x)=F_1F_{1}^{\sigma}F_2F_{2}^{\sigma}$ has no possibility with only even degree terms.\\
\emph{Case $P(X)$ is reducible}: If $P(X)$ is reducible then $P(X)=(X+a)^2 \neq 0$, then $a=\sqrt{q}$ has no integer solution as  $q$ is a odd power of prime. 
Therefore we have
\begin{theorem}\label{D1}  The characteristic polynomial of  a simple supersingular abelian variety of the dimension $1$ over $\mathbb{F}_q$ ($q=p^n$, $n$ odd) is one the following
\begin{enumerate}
\item $p=2:X^2 \pm\sqrt{2q}X+q$
\item $p=3:X^2\pm \sqrt{3q}X+q$
\item $X^2+q$
\end{enumerate}
\end{theorem}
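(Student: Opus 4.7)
The plan is to work through the classification via the procedure established in Section 2, treating the irreducible and reducible cases for $P(X)$ separately and, within the irreducible case, splitting on whether the middle coefficient $a_1$ vanishes.

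First I would observe that for $g=1$ the Weil polynomial has the form $P(X)=X^2+a_1X+q$, so there is only one coefficient to pin down. If $a_1=0$, the polynomial $X^2+q$ is automatically a Weil polynomial whose roots are $\pm\sqrt{q}\,i=\sqrt{q}\zeta_4$, hence supersingular by Manin--Oort; this accounts for item (3). If instead $P(X)$ is reducible, then $P(X)=(X+a)^2$ with $a^2=q$, and since $q=p^n$ with $n$ odd there is no integer $a$, so this branch produces nothing.

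Next, assuming $P(X)$ is irreducible with $a_1\neq 0$, I would apply the substitution $t=X/\sqrt{q}$ of Proposition \ref{S2} to produce $G(t)\in\mathbb{Q}(\sqrt{q})[t]$ and its norm $H(t)=G(t)G(t)^\sigma$. The key step is to invoke Theorem \ref{Odd}: if $G$ is irreducible over $\mathbb{Q}(\sqrt{q})$, then $H(t)$ is a cyclotomic polynomial of degree $4g=4$, so the conductor $m$ satisfies $\phi(m)=4$, giving $m\in\{5,8,10,12\}$. Listing the irreducible cyclotomic factors for these values and selecting those with only even powers of $t$ (as forced by the structure of $H$), I expect exactly two candidates, $t^4+1=\Phi_8$ and $t^4-t^2+1=\Phi_{12}$. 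Matching the coefficient of $t^2$ against $2-a_1^2/q$ yields $a_1^2=2q$ or $a_1^2=3q$, which have integer solutions precisely when $q$ is an odd power of $2$ or $3$ respectively, producing items (1) and (2).

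For the remaining subcase where $G$ is reducible, Proposition \ref{Red} forces $G=F_1F_2$ with each $F_i$ of degree $g=1$. Then $H=F_1F_1^\sigma F_2F_2^\sigma$ is a product of quadratics coming from roots of unity of degree at most $2$ over $\mathbb{Q}$, i.e.\ from $m$ with $\phi(m)\in\{1,2\}$, so $m\in\{1,2,3,4,6\}$. I would enumerate these finitely many products and check that none produces a polynomial with all odd-degree coefficients zero and a nontrivial $t^2$ term compatible with $a_1\neq 0$; the expected outcome is that this case is vacuous. I anticipate this last verification to be the only mildly delicate step, since it requires ruling out all small products rather than appealing to a single cyclotomic classification. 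Combining the surviving cases gives exactly the three families listed in the statement.
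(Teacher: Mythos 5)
Your proposal is correct and follows essentially the same route as the paper: the same split into $a_1=0$, $P$ reducible, and $P$ irreducible with $a_1\neq 0$, the same reduction via $H(t)=G(t)G(t)^{\sigma}$ to cyclotomic polynomials with $\phi(m)=4$ (selecting $\Phi_8$ and $\Phi_{12}$ as the even-degree candidates), and the same coefficient comparison giving $a_1^2=2q$ or $3q$. The elimination of the $G$ reducible subcase by checking that no product $F_1F_1^{\sigma}F_2F_2^{\sigma}$ has only even-degree terms is exactly what the paper does as well.
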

In fact all of them appear, see \cite{Waterhouse}.
\section{Dimension 2}
 The characteristic polynomial of  a supersingular abelian variety of dimension $2$ is given by $P(X)=X^4+a_{1}X+a_{2}X^2+qa_1X+q^2$. We have following cases.
\subsection{Case $a_{2i+1} \neq 0$}\label{G2}
Let $P(X)$ be irreducible with $a_{2i+1} \neq 0$.
 On doing transformation $x=\frac{X}{\sqrt{q}}$ we get\\
$H(x)=\left( x^{4}+\frac {a_{2}x^{2}}{q}+1 \right) ^{2}-\frac{1}{q} \left( a_1x^{3}+a_1x \right) ^{2}$ whose roots are  mth roots of unity where\\
 \emph{Case 1.} If $G(x)$ is irreducible  over  $\mathbb{Q}({\sqrt{q}})[x]$ then $\phi(m)=4.2=8$, which implies $m\in\{ 15, ~16, ~20,~ 24,~ 30\}$.
Collecting coefficients of $x$ we get
 \begin{displaymath}
 x^{8}+ \left( \frac {2 a_{2}}{q}-\frac{a_{1}^{2}}{q}\right) x^{6}+ \left( 2+\frac {a_{2}^{2}}{q^2}-\frac {2a_{1}^{2}}{q} \right) x^{4}+ \left( \frac {2a_2}{q}-\frac{a_{1}^{2}}{q}\right) x^{2}+1.
\end{displaymath}
Let\\
 $E_1:= \frac {2a_2}{q}-\frac{a_{1}^{2}}{q},$\\
 $E_2:= 2+\frac {a_{2}^{2}}{q^2}-\frac {2a_{1}^{2}}{q}. $\\

\begin{enumerate}
\item $x^{15}-1=\left( x-1 \right)  \left( 1+x^{4}+x^{3}+x^{2}+x \right)\left( 1+x^{2}+x \right)  \left( 1-x+x^{3}-x^{4}+x^{5}-x^{7}+x^{8} \right)$
\item $x^{16}-1=\left( x-1 \right)  \left( 1+x \right)  \left( 1+{x}^{2} \right)\left( 1+{x}^{4} \right)  \left(1+{x}^{8} \right)$
\item $x^{20}-1=  \left( x-1 \right)  \left(1+x^{4}+x^{3}+x^2+x \right)
  \left( 1+x \right)  \left( 1-x+x^{2}-x^{3}+x^{4} \right)\\ \left( 1+x^{2} \right)  \left( x^{8}-x^{6}+x^{4}-x^{2}+1\right)$
\item $x^{24}-1= \left( x-1 \right)  \left( 1+{x}^{2}+x \right)  \left( 1+x \right)
   \left( 1-x+{x}^{2} \right)  \left( 1+{x}^{2} \right)  \left( {x}^{4}-
   {x}^{2}+1 \right)  \left( 1+{x}^{4} \right)  \left( {x}^{8}-{x}^{4}+1\right)$
\item $x^{30}-1=  \left( x-1 \right)  \left( 1+{x}^{4}+{x}^{3}+{x}^{2}+x \right) \left( 1+{x}^{2}+x \right)  \left( 1-x+{x}^{3}-{x}^{4}+{x}^{5}-{x}^{7}+{x}^{8} \right)  \left( 1+x \right) \\
     \left( 1-x+{x}^{2}-{x}^{3}+{x}^{4} \right)  \left( 1-x+{x}^{2} \right)  \left(1+x-{x}^{3}-{x}^{4}-{x}^{5}+{x}^{7}+{x}^{8} \right) $
\end{enumerate}
So possibilities for $H(x)$ are $x^8+1$ or $ x^{8}-x^{6}+x^{4}-x^{2}+1$ or $ {x}^{8}-{x}^{4}+1$.
\begin{enumerate}
\item If  $H(x)= x^8+1$ then $E_1=E_2=0$. Maple gives $a_{2}$ satisfies $(z^2-4z+2)q$. Since $z^2-4z+2$ is irreducible by Eisentein's 's Criterion, $a_2$ has no integers solution.
\item If $H(x)=  x^{8}-x^{6}+x^{4}-x^{2}+1$ then $E_1=-1,~ E_2=1$. Maple gives $a_2$ satisfies $\frac{-q}{2} +\frac{1}{2}(z^2-10z+5)q$ which again has no integer roots by Eisenstein's criteria.
\item If $H(x)= {x}^{8}-{x}^{4}+1$ then $E_1=0,~ E_2=-1$ which implies $a_1$ satisfies $z^2-6q$ (which has no integer solution for any $q$) or $a_1 \pm \sqrt{2q},~ a_2=q$ which has integer solution for $p=2$.
    Therefore $P(X)=X^4\pm \sqrt{pq}X^3+qX^2\pm q\sqrt{pq}X+q^2$ is the possibility where $p=2$.
\end{enumerate}
\emph{Case 2.}  If $G(x)$ is reducible. Then  by theorem \ref{Red}, $G(x)=F_1(x).F_2(x)$.
The roots of $F_i$  are mth root of unity  where $\phi(m)=2g=4$  if $F_i \in \mathbb{Q}({\sqrt{q}})[x]\setminus \mathbb{Q}[x]$ which case $m \in \{5,8,10,12\}$ else $\phi(m)=g=2$ in which case  $m \in \{3,4,6\}$.
\begin{enumerate}
\item $x^5-1=(x-1)(1+x^4+x^3+x^2+x)$
\item $x^8-1=(x-1)(1+x)(1+x^2)(1+x^4)$
\item $x^{10}-1=(x-1)(1+x^4+x^3+x^2+x)(1+x)(1-x+x^2-x^3+x^4)$
\item $x^{12}-1=(x-1)(1+x^2+x)(1+x)(1-x+x^2)(1+x^2)(x^4-x^2+1)$
\item $x^3 -1=(x-1)(1+x^2+x)$
\item $x^4 -1=(x-1)(1+x)(1+x^2)$
\item $x^6 -1=(x-1)(1+x^2+x)(1+x)(1-x+x^2)$.
\end{enumerate}
This gives following cases for $H(x)$ as in theorem \ref{Red}.
\begin{enumerate}
\item  If $H(x)= 1+2x^4+2x^2+2x^6+x^8$  then $ E_1 = 2,~ E_2 =2$  then $a_1=\pm \sqrt{2q},~ a_2= 2q$ which gives
$P(X)=X^4\pm \sqrt{2q}X^3+2qX^2\pm q\sqrt{2q}X+q^2= (X^2+q)(X^2 \pm\sqrt{2q}X+q)$. But $P(X)$ was assumed to irreducible so this not a possibility.
\item  If $H(x)= x^2+1+x^6+x^8 $   then $  E_1=1, ~ E_2 =0 $ implies $a_1=\pm \sqrt{3q},~ a_2= 2q$ which gives $P(X)=X^4\pm \sqrt{3q}X^3+2qX^2\pm q\sqrt{3q}X+q^2= (X^2+q)(X^2 \pm\sqrt{3q}X+q)$. But $P(X)$  was assumed to irreducible so this not a possibility.
\item  If $H(x)=x^8-x^6 +2x^4-x^2+1$  then $ E_1 = -1,~ E_2 =2$ which implies $a_2$ satisfies $\frac{-q}{2} +\frac{1}{2}(z^2-10z+5)q$ which has no integer roots by Eisenstein's criteria.
\item If $H(x)= x^8-2x^6+3x^4-2x^2+1$  then  $  E_1 = -2, ~ E_2 =3$ then
  \begin{enumerate}
  \item $a_1=0, ~a_2=-q$ then $P(X)=x^4-qx^2+q^2$ which is irreducible if $p\neq 3$, hence is a possibility. If $p =3$ then $P(X)=X^4-qX^2+q^2=(X^2-\sqrt{3q}X+q)(X^2\sqrt{3q}X+q)$.
   \item  $a_1= \pm 2\sqrt{3q},~a_2=5q$ which has integer solutions if $q$ is odd power of 3, but then $P(X)=(X^2\pm \sqrt{3q}X+q)^2$ which is a contradiction to assumption that $P(X)$ was irreducible, hence  not possible.
   \end{enumerate}
\item If $H(x)= 1+x^4+x^2+x^8+x^6$  then  $ E_1 =1,~ E_2 =1$ then $a_1= \pm \sqrt{5q},~ a_2=3q$  or $a_1$ satisfies $z^2-q$ which has  no integer solutions as $q$ is an odd power of prime. So possibility is
 $P(X)=X^4\pm \sqrt{pq}X^3+3qX^2\pm q\sqrt{pq}X+q^2$ where $p=5$.
 \item If $H(x)=1+2x^4+x^8$ then  $E_1 = 0,| E_2 =2$ then
 \begin{enumerate}
 \item $a_1=0,~ a_2=0$ then  $P(X)=X^4+q^2$ which is irreducible if $p\neq 2$ . As $P(X+1)=(X+1)^4+q^2= X^4+4X^3+6X^2+4X+1+q^2$ and $p\neq 2$ implies $1+q^2$ is $2 \mod 4$, hence by Eisentein's criteria is irreducible and is a  possibility for $P(X)$. If $p=2$ then $P(X)=X^4+q^2=(X^2-\sqrt{2q}X+q)(X^2+\sqrt{2q}X+q)$ which is reducible hence not possible.

   \item$a_1= \pm 2\sqrt{2q},~ a_2=4q$, which has integer solutions if $q$ is odd power of 2, but   then $P(X)=(X^2\pm \sqrt{2q}X+q)^2$ which is a contradiction to assumption that $P(X)$ was irreducible, hence  not possible.
\end{enumerate}
\end{enumerate}
\subsection{Case $a_1=0$} 
If $a_1=0$ then $G(x)=x^4+\frac{a_2}{2}x^2+1$. We have 
\begin{enumerate}
\item If $G(x)$ is irreducible, the it is irreducible cyclotomic factor of $x^m-1$  of degree 4 where $\phi(m)=4$. But since in this case $G(x)$  has only even degree terms we have $G(x)=x^4-x^2+1$ in which case $a_2= -q$ which is already dealt above. 
\item If $G(x)$ is reducible  then $G(x)=(1+x+x^2)(1-x+x^2)=1+x^2+x^4$ which gives $a_2=q$ which is also gives a possibility for $P(X)$.\\
\end{enumerate}

Let $P(X)$ be reducible. Then  $P(X)=h(X)^e$ where $h$ is irreducible over $\mathbb{Z}$ with $e|g$ and $h_0= \pm q$ ie;  $P(X)=(X^2+aX \pm q)^2$.
\begin{enumerate}
\item If $a=0$, then $P(X)=(X^2-q)^2$ or $(X^2+q)^2$. But later is not a possible as the corresponding abelian variety is not simple by Tate's theorem, since $x^2+q$ corresponds to dimension $1$ abelian variety. 
\item  If $a \neq 0$ then following theorem \ref{S2} we  have $G(t)=(t^2 \pm 1)+\frac{a}{\sqrt{q}}t$. Then
\begin{enumerate}
\item If constant term is 1, then from discussion on dimension 1 we get $a=\pm \sqrt{2q},~ \pm \sqrt{3q}$  in which case $H(x)$ corresponds to an abelian variety of dimension $1$ see \ref{D1}.
\item If constant term is -1, then from discussion of dimension 1, $a$ has no solution.  
\end{enumerate}

\end{enumerate}
We can conclude above the discussion as a following  theorem.
\begin{theorem}\label{D2}  The characteristic polynomial of  a simple supersingular abelian variety of dimension $2$ over $\mathbb{F}_q$ ($q=p^n$, $n$ odd) is one the following
\begin{enumerate}
\item $p\neq 3: X^4-qX^2+q^2$
\item $X^4 + qX^2+q^2$
\item $p=2:X^4\pm \sqrt{pq}X^3+qX^2\pm q \sqrt{pq}X+q^2$
\item $p=5:X^4\pm \sqrt{pq}X^3+3qX^2\pm q \sqrt{pq}X+q^2$
\item $(X^2-q)^2$
\item $p \neq 2:X^4+q^2$

\end{enumerate}
\end{theorem}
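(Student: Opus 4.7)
The plan is to apply the procedure of Section 2.3 to $g = 2$ and then prune the resulting list of candidate polynomials by imposing simplicity. I would split into three branches: (A) $P(X)$ irreducible with some $a_{2i+1} \neq 0$; (B) $P(X)$ irreducible with $a_1 = 0$; and (C) $P(X)$ reducible, so $P(X) = h(X)^e$ with $e \mid g = 2$. In each branch, the auxiliary polynomial $G(t) \in \mathbb{Q}(\sqrt{q})[t]$ obtained from $P(X)/q^g$ under the substitution $X/\sqrt{q} \mapsto t$ must have only roots of unity as zeros, which pins down $G$ (or $H = G \cdot G^\sigma$) as a product of cyclotomic polynomials of controlled degree.

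For branch (A), I would write out $H(t)$ explicitly, noting that it is even in $t$ and that its coefficients are governed by the two expressions $E_1 := 2a_2/q - a_1^2/q$ and $E_2 := 2 + a_2^2/q^2 - 2a_1^2/q$. By Theorem \ref{Odd}, when $G$ is irreducible over $\mathbb{Q}(\sqrt{q})$ the polynomial $H$ is an irreducible cyclotomic of degree $4g = 8$, so $m \in \{15,16,20,24,30\}$; when $G = F_1 F_2$ is reducible, each $F_i F_i^\sigma$ (or $F_i$ itself if $F_i \in \mathbb{Q}[t]$) is cyclotomic of degree $2g = 4$ or $g = 2$, with $m \in \{5,8,10,12\}$ or $m \in \{3,4,6\}$. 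Enumerating the factorisations $t^m - 1 = \prod \Phi_d(t)$ and retaining only those whose odd-degree coefficients cancel yields a short list of admissible $H$'s; comparing coefficients reads off $(E_1, E_2)$, from which $(a_1, a_2)$ is determined. For branch (B), Proposition \ref{Even} reduces $G(x) = x^4 + (a_2/q)x^2 + 1$ to the unique even irreducible cyclotomic of degree $2$, namely $x^4 - x^2 + 1$ (giving $a_2 = -q$), or to the product $(1+x+x^2)(1-x+x^2) = 1 + x^2 + x^4$ (giving $a_2 = q$). For branch (C), I would write $h(X) = X^2 + aX \pm q$, observe that $h$ is itself a dimension-$1$ Weil polynomial, and invoke Theorem \ref{D1} to enumerate the admissible values of $a$.

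The main obstacle is verifying integrality of the resulting $(a_1, a_2)$ and then weeding out non-simple polynomials. Most candidate $(E_1, E_2)$ pairs yield polynomial conditions over $\mathbb{Z}[q]$ such as $z^2 - 4z + 2$ or $z^2 - 10z + 5$, which are irreducible by Eisenstein's criterion and hence contribute no integer solutions; a handful yield $a_1 = \pm\sqrt{cq}$ with $c \in \{2,3,5\}$, integral precisely when $p = c$ and $n$ is odd. After this arithmetic screening, simplicity must be rechecked via Theorem \ref{Tate}: candidates that factor as $(X^2+q)(X^2 \pm \sqrt{cq}X + q)$ or as perfect squares $(X^2 \pm \sqrt{cq}X + q)^2$ must be discarded, as must $(X^2+q)^2$ in branch (C) and $X^4 + q^2$ when $p = 2$ (where it splits as $(X^2 - \sqrt{2q}X + q)(X^2 + \sqrt{2q}X + q)$). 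Assembling the survivors yields precisely the six families listed in the statement of Theorem \ref{D2}.
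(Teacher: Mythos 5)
Your overall strategy coincides with the paper's: the same three-way split (irreducible with an odd coefficient nonzero, irreducible with $a_1=0$, reducible), the same invariants $E_1,E_2$, the same enumeration of cyclotomic candidates over $m$ with $\phi(m)\in\{8,4,2\}$, the same Eisenstein/integrality screening, and the same pruning of non-simple candidates via Theorem \ref{Tate}. Branches (A) and (B) are fine as written, modulo two small slips: $x^4-x^2+1$ has degree $4$, not $2$, and $\Phi_8=x^4+1$ is a second even irreducible cyclotomic of degree $4$ that must be checked in branch (B) (it gives $a_2=0$ and $P=X^4+q^2$, which you do recover elsewhere in your list).

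The one genuine gap is in branch (C). You write $h(X)=X^2+aX\pm q$, ``observe that $h$ is itself a dimension-$1$ Weil polynomial, and invoke Theorem \ref{D1}.'' That observation is only valid when the constant term is $+q$: the polynomial $X^2+aX-q$ is not the characteristic polynomial of any elliptic curve (wrong sign of constant term), and Theorem \ref{D1} says nothing about it. If you apply Theorem \ref{D1} to all of branch (C), every surviving $h$ is an elliptic-curve Weil polynomial, so by Honda--Tate the simple variety attached to the corresponding Weil number has dimension $1$ and the entire branch gets discarded. But item 5 of the theorem, $(X^2-q)^2$, lives precisely in this branch: $X^2-q$ is the minimal polynomial of the real Weil number $\sqrt{q}$ (irrational since $n$ is odd), which is not an elliptic-curve Weil number, and the simple supersingular variety it corresponds to is a surface with characteristic polynomial $(X^2-q)^2$. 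The paper handles this by splitting on the sign of the constant term: for $-q$ with $a\neq 0$ the transformed polynomial $G(t)=t^2-1+(a/\sqrt{q})t$ admits no admissible integer $a$, while $a=0$ yields the surviving $(X^2-q)^2$; for $+q$ one does reduce to the dimension-$1$ list and discards everything as non-simple. As written, your argument either loses $(X^2-q)^2$ or asserts its survival without justification.
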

In fact all of them appear, see \cite{Maiser.Nart}.
\section{Dimension 3}
The characteristic polynomial of Frobenius of a abelian variety of  dimension $3$ is given by\\
\begin{displaymath}
P(X)=X^6+a_{1}X^5+a_{2}X^4+a_{3}X^3+a_{2}qX^2+a_{1}q^2X+q^3.
\end{displaymath}
If $P(X)$ is irreducible over $\mathbb{Q}(X)$  then we have following cases.\\
\subsection{ Case $a_{2i+1} \neq 0$}
\emph{Case 1} If  $G(x)$ is irreducible then \\
\begin{displaymath}
H(x)=G(x)G(x)^{\sigma}=(x^6+\frac{a_{2}}{q}x^4+\frac{a_{2}}{q}x^2+1)^2-\frac{1}{q}(a_{1}x^5+\frac{a_{3}}{q}x^3+a_1x)^2
\end{displaymath}
$=x^{12}+(\frac{2a_{2}}{q}-\frac{2a_{1}^2}{q})x^{10}+(\frac{2a_{2}}{q}+\frac{a_{2}^2}{q^2}-\frac{2a_{1}a_{3}}{q^2})x^8+(\frac{-a_{3}^{2}}{q^3}-\frac{2a_{1}^{2}}{q}+\frac{2a_{2}^{2}}{q^2}+2)x^6
+(\frac{2a_{2}}{q}+\frac{a_{2}^2}{q^2}-\frac{2a_{1}a_{3}}{q^2})x^4+(\frac{2a_{2}}{q}-\frac{2a_{1}^2}{q})x^{2}+1$ whose roots are  $m^{th}$ root of unity where $\phi(m)=4g=12$ which implies $m\in \{13,~21,~26,~28,~36,~42\}$. We have following factorization for them.
\begin{enumerate}
\item $x^{13}-1=(x-1)(x^{12} + x^{11} + x^{10} + x^9 + x^8 + x^7 + x^6 + x^5 + x^4 + x^3 + x^2 + x + 1)$
\item $x^{21}-1= (x - 1)(x^2 + x + 1)(x^6 + x^5 + x^4 + x^3 + x^2 + x + 1, 1)(x^{12} - x^{11} + x^9 - x^8 + x^6 - x^4 + x^3 - x + 1)$
\item $x^{26}-1=(x-1)(1+x^{12}+x^{11}+x^{10}+x^9+x^8+x^7+x^6+x^5+x^4+x^3+x^2+x)(1+x)(1-x+x^2-x^3+x^4-x^5+x^6-x^7+x^8-x^9+x^{10}-x^{11}+x^{12})$
\item $x^{28}-1=(x-1)(1+x^6+x^5+x^4+x^3+x^2+x)(1+x)(1-x+x^2-x^3+x^4-x^5+x^6)(1+x^2)(x^{12}-x^{10}+x^8-x^6+x^4-x^2+1)$
\item $x^{36}-1=(x-1)(1+x^2+x)(1+x^6+x^3)(1+x)(1-x+x^2)(1-x^3+x^6)(1+x^2)(x^4-x^2+1)(x^{12}-x^6+1) $
\item $x^{42}-1=(x-1)(1+x^6+x^5+x^4+x^3+x^2+x)(1+x^2+x)(1-x+x^3-x^4+x^6-x^8+x^9-x^{11}+x^{12})(1+x)(1-x+x^2-x^3+x^4-x^5+x^6)(1-x+x^2)(1+x-x^3-x^4+x^6-x^8-x^9+x^{11}+x^{12})$
\end{enumerate}\

 Let $E_1=\frac{2a_{2}}{q}-\frac{2a_{1}^2}{q}$\\
     $E_2=\frac{2a_{2}}{q}+\frac{a_{2}^2}{q^2}-\frac{2a_{1}a_{3}}{q^2}$\\
     $E_3=\frac{-a_{3}^{2}}{q^3}-\frac{2a_{1}^{2}}{q}+\frac{2a_{2}^{2}}{q^2}+2$ \\

Comparing with $H(x)$ with  cyclotomic factor of degree $12$,  we have following possibilities for it.
\begin{enumerate}
\item $H(x)=x^{12}-x^{10}+x^8-x^6+x^4-x^2+1$ in which case $E_1= -1,~ E_2=1,~ E_3=-1$  which gives $a_1= \pm \sqrt{7q}, ~a2=3q,~ a_3=q \pm\sqrt{7q}$ which has integer solutions if and only if $q$ is  odd power of $7$. Other solution for $a_1$ is root of $z^6-21qz^4+35z^2q^2-7q^3$ which has no integers roots for $p\neq 7$ by Eisenstein's criteria.
    If $q=7^n$ then $f(z)=(z^3-7^{\frac{n+1}{2}}z^2-7^{n+1}z-7^{\frac{3n+1}{2}})(z^3+7^{\frac{n+1}{2}}z^2-7^{n+1}z+7^{\frac{3n+1}{2}})$ and each of which have no integer roots.
\item $H(x)=x^{12}-x^6+1$ we have $E_1= 0,~ E_2=0,~ E_3=-1$ which gives one solution as    $a_1= 0, ~a_2=0,~ a_3=q \sqrt{3q}$ which has integer solutions if and only if $q$ is  odd power of $3$. The other solution for $a_1$ is root of $z^6-6qz^4+9z^2q^2-3q^3$ which has no integers roots for $p\neq 3$ by Eisenstein's criteria. If  $p =3$ then if $q=3^n$  then $z^6-6qz^4+9z^2q^2-3q^3=(z^3-3^{n+1}z-3^{\frac{3n+1}{2}})(z^3-3^{n+1}z+3^{\frac{3n+1}{2}})$, where it is easy to check none of this factors have integer solutions.
\end{enumerate}
\emph{Case 2.} $G(x)$ is reducible. Then  by \ref{Red} $G(x)=F_1(x).F_2(x)$.
The roots of $F_i$  are mth root of unity  where $\phi(m)=2g=6$  if $F_i \in \mathbb{Q}({\sqrt{q}})[x]\setminus \mathbb{Q}[x]$ else $\phi(m)=g=3$. Since  $\phi(m)=g=3$ has no solution for $m$ both $ F_i \in \mathbb{Q}({\sqrt{q}})[X] \setminus \mathbb{Q}[x]$. If $\phi(m)=6$ then $m \in \{7, 9, 14, 18\}$, which has following expansions.
\begin{enumerate}
\item $x^7-1=  \left( x-1 \right)  \left( {x}^{6}+{x}^{5}+{x}^{4}+{x}^{3}+{x}^{2}+x+1 \right)$.
\item $x^9-1=  \left( x-1 \right)  \left( {x}^{2}+x+1 \right)  \left( {x}^{6}+{x}^{3}+1 \right)$.
\item $x^{14}-1=\left( x-1 \right)\left( {x}^{6}+{x}^{5}+{x}^{4}+{x}^{3}+{x}^{2}+x+1 \right)    \left( x+1 \right)  \left(1-x+{x}^{2}-{x}^{3}+{x}^{4}-{x}^{5}+{x}^{6} \right)$.
\item  $x^{18}-1= \left( x-1 \right)  \left( {x}^{2}+x+1 \right)  \left( {x}^{6}+{x}^{3}+1 \right)  \left( x+1 \right)  \left( {x}^{2}-x+1 \right)  \left( {x}^{6}-{x}^{3}+1 \right)$.
\end{enumerate}
Comparing with  $H(x)=G(x)G(x)^{\sigma}=F_1(x)F_1(x)^{\sigma}F_2(x)F_2(x)^{\sigma}$ we get following possibilities.
\begin{enumerate}
\item $H(x)=x^{12}+x^{10}+x^8+x^6+x^4+x^2+1$ then $E_1= 1,~ E_2=1,~ E_3=1$. One of the solution of  $a_1$ satisfies $z^2-q$ which has no integer solutions as $q$ is odd power of prime. The other solution is the root $f(z)=z^6-19qz^4+83z^2q^2-q^3$. We claim that it has no integer solutions. Suppose it has integer solution then it should be solution modulo 3.
    $q=1$ then  $f(z)\mod 3=(z^3+2z^2+1)(z^3-z^2+2)$~, $q=2$ then $f(z)\mod 3=z^6+z^4+2z^2+1$,$q=1$ then $f(z)\mod 5 =(z^3+4z^2+z+1)(z^3+z^2+z+4)$, $q=-1$ then $f(z)\mod 5 =z^6+4z^4+3z^2+1 $ has no solutions, hence by lemma ($\mod 3 \mod 5$ test) it has no integer solutions.
\item If $H(x)=x^{12}+x^6+1$ then $E_1= 0,~ E_2=0, ~E_3=1$. One of the solution of  $a_3$ satisfies $z^2-q$ which has no integer solutions as $q$ is odd power of prime. The other solution $a_1$ is the twice the root $f(z)=z^6-6qz^4-9z^2q^2-q^3$. If
    $q=1$ then  $f(z)\mod 3=(z^3+1)(z^3+2),$
    $q=2$ then $f(z)\mod 3=z^6+1$,
    $q=1$ then $f(z)\mod 5 =z^6+z^4+4z^2+1 $,
    $q=-1$ then $f(z)\mod 5 =z^6+z^4+4z^2+1 $
    has no solutions, hence by lemma ($\mod 3 \mod 5$ test) it has no integer solutions.
\end{enumerate}
\subsection{ Case $a_{2i+1} =0$}
If  $a_{2i+1} = 0$ then \\
\emph{Case 1} $G(x)=x^6+\frac{a_{2}}{q}x^4+\frac{a_{2}}{q}x^2+1$ is irreducible over $\mathbb{Q}$ then it has roots as mth root of unity where $\phi(m)=6$ hence $m \in \{7,~9,~14,~18\}$. The possibility of such $G(x)$ is already mention in case 2. and none of them have this form.\\
\emph{Case 2} If $G(x)$ is reducible then $G=F_1F_2$   such that $F_i$ is irreducible factor of $x^m-1$  such that $\phi(m)=3$, which has no solution for $m$. \\

\subsection{$ P(X)$ is reducible}
If $P(X)$ is reducible then $P(X)=h(X)^e$, where $e|3$ implies $e=3$. Therefore $h(X)=(X^2+aX+q)$ and it is not possible for  $P(X)$ to correspond to a simple abelian variety as already discussed in dimension 1 case.
Hence we have,
\begin{theorem}\label{D3} The characteristic polynomial of a simple supersingular abelian variety of dimension $3$ over $\mathbb{F}_q$ ($q=p^n$, $n$ odd) is one the following
\begin{enumerate}
\item $p=3: X^6 \pm q\sqrt{pq}X^3+q^3$
\item $p=7:X^6\pm\sqrt{pq}X^5+3qX^4\pm q\sqrt{pq}X^3+3q^2X^2\pm q^2\sqrt{pq}X+q^3$
\end{enumerate}
\end{theorem}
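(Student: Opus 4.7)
The plan is to apply the procedure of Section~2 in all four cases delineated by whether $P(X)$ is irreducible or reducible and by whether the $a_{2i+1}$ vanish, then cross-check each candidate against the Honda--Tate/simplicity constraint.

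First, assuming $P(X)$ is irreducible and some $a_{2i+1}\neq 0$, I would carry out the substitution $x=X/\sqrt{q}$ to obtain $G(x)\in\mathbb{Q}(\sqrt{q})[x]$ and then $H(x)=G(x)G(x)^{\sigma}$, expanding to extract symbolic expressions $E_1,E_2,E_3$ (as already set up in the text) for the even-degree coefficients of $H$. If $G$ is irreducible, Theorem~\ref{Odd}(1) tells me $H$ is a cyclotomic factor of degree $\phi(m)=12$, so $m\in\{13,21,26,28,36,42\}$; I would list all degree-$12$ cyclotomic polynomials among these and discard the ones containing odd powers, leaving precisely $\Phi_{28}$ and $\Phi_{36}$. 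Matching $E_1,E_2,E_3$ against each survivor gives a system whose $a_1$-resolvent is a sextic in $z$; I would check for integer roots via Eisenstein at $p=7$ respectively $p=3$, then finish the remaining primes with a direct factorisation of the sextic over $\mathbb{Z}[\sqrt{q}]$ as shown for $\Phi_{28}$ and $\Phi_{36}$ to confirm that the only surviving solutions are $p=7$ giving the second family in the theorem and $p=3$ giving the first.

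Next, still with $P(X)$ irreducible and some $a_{2i+1}\neq 0$, if $G$ is reducible then Proposition~\ref{Red} forces $G=F_1F_2$ with $\deg F_i=3$, and Theorem~\ref{Odd}(2) says each $F_iF_i^{\sigma}$ is a cyclotomic factor of degree $\phi(m)=6$, so $m\in\{7,9,14,18\}$. I would enumerate the products $H=F_1F_1^\sigma\cdot F_2F_2^\sigma$ built from degree-$6$ cyclotomic polynomials, keep those with only even powers of $x$ (namely $\Phi_{7}\Phi_{14}$-type and $\Phi_9\Phi_{18}$-type products giving $\sum_{i=0}^{6}x^{2i}$ and $x^{12}+x^6+1$), and for each, solve for $(a_1,a_2,a_3)$. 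In both candidate cases one root branch for $a_1$ satisfies $z^2-q$ and hence is rejected since $q=p^n$ with $n$ odd, while the other branch is a sextic which I would need to show has no integer roots. Here the main obstacle is that Eisenstein does not apply cleanly, so I would invoke the mod $3$, mod $5$ test of the paper: reduce $f(z)$ modulo $3$ for $q\equiv 1,2\pmod{3}$ and modulo $5$ for $q\equiv \pm 1\pmod{5}$, verifying absence of roots in each of the four reductions; this collectively eliminates both reducible-$G$ candidates.

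Finally, I would dispose of the remaining cases briefly. When all $a_{2i+1}=0$: if $G$ is irreducible over $\mathbb{Q}$ then Proposition~\ref{Even}(1) demands a cyclotomic factor of degree $g=3$, but $\phi(m)=6$ corresponds to degree-$3$ factors over $\mathbb{Q}(\sqrt{q})$ only and no degree-$3$ even-only cyclotomic polynomial exists; if $G$ is reducible over $\mathbb{Q}$, Proposition~\ref{Even}(2) requires $\phi(m)=g=3$, which has no solution. When $P(X)$ is reducible, $P(X)=h(X)^e$ with $e\mid g=3$ forces $e=3$ and $h(X)=X^2+aX+q$; the resulting variety is isogenous to a power of a dimension-$1$ supersingular curve and so not simple by Theorem~\ref{Tate}, ruling out this case. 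Assembling the surviving polynomials yields exactly the two families listed in Theorem~\ref{D3}.
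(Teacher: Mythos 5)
Your proposal follows essentially the same route as the paper: the same case split, the same cyclotomic candidates ($\Phi_{28}$ and $\Phi_{36}$ when $G$ is irreducible, and the products $\Phi_{7}\Phi_{14}$ and $\Phi_{9}\Phi_{18}$ when $G$ is reducible), and the same elimination tools (Eisenstein plus an explicit factorisation at the exceptional prime, the mod $3$, mod $5$ test, and the reduction of the $P(X)=h(X)^3$ case to dimension $1$), arriving at exactly the two listed families. The only quibble is a bookkeeping slip in the $a_{2i+1}=0$, $G$ irreducible subcase: there $G$ has degree $2g=6$ rather than $3$, and the correct reason the subcase is empty is that none of $\Phi_{7},\Phi_{9},\Phi_{14},\Phi_{18}$ is a polynomial in $x^{2}$, which is what the paper checks.
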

In fact of them occur, see \cite{Nart}.
\section{Dimension 4}

The characteristic polynomial of Frobenius of an abelian variety of dimension $4$ is given by
\begin{displaymath}
P(X)=X^8+a_{1}X^7+a_{2}X^6+a_{3}X^5+a_{4}X^4+a_{3}qX^3+a_{2}q^2X^2+a_{1}q^3X+q^4.
\end{displaymath}
Let  $P(X)$ is irreducible then we have following cases.
\subsection{Case $a_{2i+1} \neq 0$} 
If  $a_{2i+1} \neq 0$ then we have following cases,\\
\emph{Case 1.} If $G(x)$ is irreducible as in theorem \ref{S2}, we have

\begin{displaymath}
H(x)=\left( x^{8}+{\frac {{ a_2}x^{6}}{q}}+{\frac {{ a_4}x^{4}}{{q}^{2}}}+{\frac {{ a_2}x^{2}}{q}}+1 \right) ^{2}- \frac{1}{q}\left( {a_1}x^{7}+{\frac {{ a_3}x^{5}}{q}}+{\frac {{ a_3}x^{3}}{q}}+{ a_1}x \right) ^{2}
\end{displaymath}

\begin{eqnarray*}\nonumber
=x^{16}+ \left( {\frac {{{ a_1}}^{2}}{q}}+{\frac {{ 2a_2}}{q}} \right) x^{14}+ \left( 2{\frac {{ a_4}}{{q}^{2}}}+{\frac {{{
 a_2}}^{2}}{{q}^{2}}}-2{\frac {{ a_1}{ a_3}}{{q}^{2}}}\right) x^{12}+ \left( 2{\frac {{ a_2}}{q}}-{\frac {{{ a_3}}^
{2}}{{q}^{3}}}+2{\frac {{ a_4}{ a_2}}{{q}^{3}}}-2{\frac {{a_1}{ a_3}}{{q}^{2}}} \right) x^{10}+\\
 \left( {\frac {{{ a_4}}^{2}}{{q}^{4}}}-2{\frac {{{ a_1}}^{2}}{q}}+2+2{\frac {{{ a_2}}^{2}}{{q}^{2}}}-2{\frac {{{ a_3}}^{2}}{{q}^{3}}} \right) x^{8}
+ \left( 2{\frac {{ a_2}}{q}}-{\frac {{{ a_3}}^{2}}{{q}^{3}}}+2{\frac {{ a_4}{ a_2}}{{q}^{3}}}-2{\frac {{ a_1}{ a_3}}
{{q}^{2}}} \right) x^{6}+ \left( 2{\frac {{ a_4}}{{q}^{2}}}+{\frac {{{ a_2}}^{2}}{{q}^{2}}}-2{\frac {{ a_1}{ a_3}}{{q}^{2
}}} \right) x^{4}+\\
 \left( -{\frac {{{ a_1}}^{2}}{q}}+2{\frac {{a_2}}{q}} \right) x^{2}+1
\end{eqnarray*} 
whose roots are mth roots of unity where $\phi=4g= 16$ which implies $m \in \{17, 32, 34, 40, 48, 60\}$. Each of which has following factorization.
\begin{enumerate}
\item $x^{17}-1 = (x-1)(x^{16} + x^{15} + x^{14}+ x^{13} + x^{12} + x^{11} + x^{10} + x^9 + x^8 + x^7 + x^6 + x^5 + x^4 + x^3 + x^2 + x + 1)$
\item $x^{32} -1 =(x-1)(1+x)(1+x^2)(1+x^4)(1+x^8)(1+x^{16})$

\item $x^{34}-1=(x-1)(1+x^{16}+x^{15}+x^{14}+x^{13}+x^{12}+x^{11}+x^{10}+x^9+x^8+x^7+x^6+x^5+x^4+x^3+x^2+x)(1+x)(1-x+x^2-x^3+x^4-x^5+x^6-x^7+x^8-x^9+x^{10}-x^{11}+x^{12}-x^{13}+x^{14}-x^{15}+x^{16})$
\item $x^{40}-1 =(x-1)(1+x^4+x^3+x^2+x)(1+x)(1-x+x^2-x^3+x^4)(1+x^2)(x^8-x^6+x^4-x^2+1)(1+x^4)(x^{16}-x^{12}+x^8-x^4+1)$
\item $x^{48}-1=(x-1)(1+x^2+x)(1+x)(1-x+x^2)(1+x^2)(x^4-x^2+1)(1+x^4)(x^8-x^4+1)(1+x^8)(x^{16}-x^8+1)$
\item $x^{60}-1=(x-1)(1+x^4+x^3+x^2+x)(1+x^2+x)(1-x+x^3-x^4+x^5-x^7+x^8)(1+x)(1-x+x^2-x^3+x^4)(1-x+x^2)(1+x-x^3-x^4-x^5+x^7+x^8)(1+x^2)(x^8-x^6+x^4-x^2+1)(x^4-x^2+1)(x^{16}+x^{14}-x^{10}-x^8-x^6+x^2+1)$
\end{enumerate}
Let
$E_1= \frac {{{ a_1}}^{2}}{q}+\frac {{ 2a_2}}{q}$\\
 $E_2=\frac {{2 a_4}}{{q}^{2}}+\frac {{{a_2}}^{2}}{{q}^{2}}-\frac {{2 a_1}{ a_3}}{{q}^{2}}$\\
 $E_3= \frac {{2 a_2}}{q}-{\frac {{{ a_3}}^{2}}{{q}^{3}}}+\frac {{ 2a_4}{ a_2}}{{q}^{3}}-\frac {{2a_1}{ a_3}}{{q}^{2}} $\\
 $E_4=\frac {{{a_4}}^{2}}{{q}^{4}}-\frac {{ 2a_1}^{2}}{q}+2+\frac {{{2a_2}}^{2}}{{q}^{2}}-\frac {{{2a_3}}^{2}}{{q}^{3}} $

Comparing $H(x)$ with degree $16$ irreducible cyclotomic factor we have following possibilities for $H(x)$. If,

 \begin{enumerate}
 \item $H(x)=1+x^{16}$ then $E_1=E_2=E_3=E_4=0$ then $a_1$ satisfies $z^8-32z^6q+160z^4q^2-256z^2q^3+128q^4$ or
$z^8-32z^6q+288z^4q^2-768z^2q^3+128q^4$ which has no integer solutions by $\mod 3 \mod 5$ test.
\item $H(x)=x^{16}-x^{12}+x^8-x^4+1$ then $E_1=E_3=0$, $E_2=-1$ and $E_4=1$. Maple gives following solutions for $a_i$.
     \begin{enumerate}
     \item $a_1=\pm\sqrt{2q},~a_2=q,~a_3=0,~a_4=-q^2$ is one of the possible solution if $q$ is odd power of $2$.
     \item $a_1$ is root of $z^2-10q$ which has no integer solution for any $q$.
     \item $a_1$ is root of  $z^4-20z^2q+20q^2$  has no integer solution for any $q$ (by mod 3 mod 5 test).
     \end{enumerate}
\item $H(x)=x^{16}-x^8+1$ then $E_1=E_2=E_3=0$ and $E_4=-1$. This gives $a_1$ is root of $8q^2-8Z^2q+Z^4$ or $72q^2-24Z^2q+Z^4 $ or $Z^8-32Z^6q+176Z^4q^2-256Z^2q^3+64q^4$. None of these equations have integer solutions (by mod 3 mod 5 test).
\item $H(x)= x^{16}+x^{14}-x^{10}-x^8-x^6+x^2+1$ then $E_1=1,~E_2=0,~E_3=-1, ~E_4=-1$ then we have following
      \begin{enumerate}
      \item $a_1= \pm \sqrt{3q}, ~a_2=2q, ~a_3= q a_1,~a_4=q^2$ which has integer solutions only if q is odd power of $3$.
       \item $a_1$ satisfies $Z^2-15q$ or $5q^2-10Z^2q+Z^4$ or $Z^8-28Z^6q+134Z^4q^2-92Z^2q^3+q^4$ which has no integer solutions by $\mod 3 \mod 5$ test.
    \end{enumerate}

 \end{enumerate}

\emph{Case 2.} If $G(x)$ is reducible. Then  by  theorem \ref{Red}, $G(x)=F_1(x).F_2(x)$.
 If $F_i \in \mathbb{Q}({\sqrt{q}})[x] \setminus \mathbb{Q}[x]$ then its roots are  mth root of unity  where $\phi(m)=2g=8$  where $m \in \{15, 16, 20, 24, 30\}$ in which case $F_i(x)F_i(x)^{\sigma}$ is an irreducible cyclotomic factor of $x^m-1$  of degree $8$.

If $F_i \in \mathbb{Q}[x]$ then its roots are  mth root of unity  where  $\phi(m)=g=4$ in which case $m \in \{5,8,10,12\}$ and  $F_i$ is an irreducible cyclotomic factor of $x^m-1$ of degree $4$.
 Now $H(x)=F_1 F_{1}^{\sigma}F_2 F_{2}^{\sigma}$ where not both $F_i \in \mathbb{Q}[x]$ as discussed earlier. Since $H(x)$ has only even degree terms, we look at all possibilities $F_1 F_{1}^{\sigma}F_2 F_{2}^{\sigma}$ from above, all of them are listed  below. If,
 \begin{enumerate}
  \item $H(x)=1+2x^8+x^{16}$ then $E_1=E_2=E_3=0$ and $E_4=2$. Solving we get $a_1=a_2=a_3=a_4=0$ which is one of the possibility. The other possibilities are $a_1$ is root of $Z^4-4Z^2q+2q^2$ or $Z^4-8Z^2q+8q^2$ which have no integer solutions by mod 3 mod 5 test.
  \item $H(x) =x^{16}-x^{14}+x^{12}-x^{10}+2x^8-x^6+x^4-x^2+1$ then $E_1=E_3=-1$, $E_2=1$ and $E_4=2$ which gives $a_1$ satisfies $Z^{16}-72Z^{14}q+1836Z^{12}q^2 -21336Z^{10}q^3+120854q^4Z^8-334008q^5Z^6+393804q^6Z^4-107304Z^2q^7+7921q^8$ which has no integer solutions by $\mod 3 \mod 5$ test.
  \item $H(x)= x^{16}-x^{12}+2x^8-x^4+1$ then $E_1= E_3 = 0$, $E_2= -1$ and $E_4=2$ which gives $a_1$ satisfies
  $Z^8-40Z^6q+392Z^4q^2-608Z^2q^3+16q^4$ or $Z^8-24Z^6 q+136Z^4q^2-160Z^2q^3+16q^4$. None of these have integer solutions by $\mod 3, \mod 5$ test.
  \item $H(x) =x^{16}-2x^{14}+3x^{12}-4x^{10}+5x^8-4x^6+3x^4-2x^2+1$. We have $E_1=-2, ~E_2=3, ~E_3=-4$ and $E_4=5$ which gives $a_1=a_3=0$ and $a_2=-q, ~a_4=q^2$ which is a possibility. The other solutions $a_1$ are roots of $2(Z^4-10Z^2q+5q^2)$ or $2(Z^4-5Z^2q+5q^2)$ none of which is an integer by mod 3 mod 5 test.
  \item $H(x) =x^{16}-x^{14}+x^8-x^2+1$ then $E_1=-1, ~E_2=E_3=0$ and $E_4=1$ in which case $a_1$ is root of $Z^8-28Z^6q+174Z^4q^2-332Z^2q^3+121q^4$ or $Z^8-44Z^6q+446Z^4q^2-1084Z^2q^3+361q^4$ which has no integer solutions by mod 3 mod 5 test.
  \item $H(x) =x^{16}-2x^{12}+3x^8-2x^4+1$ then $E_1= E_3=0$, $E_2=-2$ and $E_4=3$. The solutions for $a_i$'s are
          \begin{enumerate}
           \item $a_1=a_2=a_3=0$ and $a_4=-q^2$ which gives $P(X)=X^8-q^2X^4+q^4$ which is irreducible when $p \neq 2$. When $p=2$ then$P(X)=X^8-q^2X^4+q^4=(X^4-\sqrt{2q}X^3+qX^2-q\sqrt{2q}X+q^2)(X^4+\sqrt{2q}X^3+qX^2+q\sqrt{2q}X+q^2)$ which means $A$ is  not simple by Tate's theorem.
           \item $a_1=\pm2\sqrt{2q},~a_2=4q, ~a_3=\pm 4q\sqrt{2q}, ~a_4=7q^2$ which integer solutions only if $q$ is odd power of $2$ . Then $P(X)=X^8\pm 2 \sqrt{2q}X^7+4qX^6 \pm 4 q\sqrt{2q}X^5+7q^2X^4 \pm 4q^2 \sqrt{2q}X^3+  4q^3X^2 \pm 2q^2\sqrt{2q}X+q^4=(X^4 \pm \sqrt{2q}X^3+qX^2\pm q\sqrt{2q}X+q^2)^2$  which implies $A$ is not simple by Tate's theorem.
           \item The other solutions for $a_1$ are roots of  $Z^2-6q$  or $2Z^4-4Z^2q+q^2$  for  which there is no integer solutions.
          \end{enumerate}
  \item $H(x) =x^{16}-x^{14}+x^{10}-x^8+x^6-x^2+1$ then $E_1=E_4=-1$, $E_2=0$ and $E_3=1$. One of the solutions are $a_1= \pm \sqrt{5q}, ~a_2=2q,~ a_3=qa_1, a_4=3q^2$. The other solutions for $a_1$ are roots of $Z^2-q$, $45q^2-30Z^2q+Z^4$ which has no integer roots for q, an odd power of prime p.

\item $H(x)=1+2x^4+2x^8+2x^{12}+x^{16}$ then $E_1=0, ~E_2=E_3= E_4=2$ then $a_1$ satisfies $-32Z^6q+64q^4+112Z^4q^2-128Z^2q^3+Z^8$ or $-32Z^6q+1600q^4+304Z^4q^2-1152Z^2q^3+Z^8$ which has no integer solutions by mod 3 mod 5 test.
\item $H(x)=x^{16}-2x^{14}+3x^{12}-2x^{10}+2x^8-2x^6+3x^4-2x^2+1$ then $ E_1=E_3=-2,~ E_2=3,~ E_4=2$. This gives $a_1$ satisfies $8q^2-8Z^2q+Z^4$ or $3136q^4-5120Z^2q^3+1136Z^4q^2-64Z^6q+Z^8$ none of which has integer roots.
\item $H(x)=x^{16}+3x^{12}-x^{14}-3x^{10}+4x^8-3x^6+3x^4-x^2+1$ then $E_1=-1,~ E_2=3,~E_3=-3, E_4=4$. Then $a_1$ is root of $5q^2-10Z^2q+Z^4$ or $121q^4-1988Z^2q^3+654Z^4q^2-52Z^6q+Z^8$ which has no integer solutions.
\item $H(x)=x^{16}-3x^{14}+6x^{12}-8x^{10}+9x^8-8x^6+6x^4-3x^2+1$ then $E_1=-3,~ E_2=6,~E_3=-8, ~E_4=9$. In this case $a_1$ satisfies $5q^2-10Z^2q+Z^4$ or $841q^4-5812Z^2q^3+1214Z^4q^2-68Z^6q+Z^8$ which has no integer solutions.
\item $H(x)=x^{16}+x^{12}+x^4+1$ then $E_1=E_3= E_4=0$ and $E_2=1$. We have following solutions
\begin{enumerate}
\item $a_1=\pm\sqrt{2q}, ~a_2=q,~ a_3=\pm q\sqrt{2q},~a_4=\pm 2q^2$ which gives
 $P(X)=X^8\pm \sqrt{2q}X^7+qX^6 \pm q\sqrt{2q}X^5 +2 q^2X^4\pm q^2\sqrt{2q}X^3+q^3X^2 \pm q^3\sqrt{2q}X+q^4=(X^2+\sqrt{2q}X+q)(X^2-\sqrt{2q}X+q)(X^4\pm \sqrt{2q}X^3+qX^2 \pm q\sqrt{2q}X+q^2)$ which implies corresponding abelian variety is not simple.
\item $a_1=3\sqrt{2q}, ~a_2=9q,~a_3=\pm9q\sqrt{2q},~a_4=14q^2$ in which case $P(X)=X^8\pm 3\sqrt{2q}X^7+9qX^6  \pm 9q\sqrt{2q}X^5+14q^2X^4 \pm 9q^2\sqrt{2q}X^3 + 9q^3X^2 \pm 3q^3\sqrt{2q}X+q^4=(X^4\pm\sqrt{2q}X^3+qX^2\pm q\sqrt{2q}X+q^2)(X^2\pm \sqrt{2q}X+q)^2$ which implies corresponding abelian variety is not simple.
\item The other solutions to $a_1$ are root of $Z^2-6q$ or $Z^2-28Z^2q+4q^2$ which has no integer solution by mod 3 mod 5 test.
\end{enumerate}
\item $H(x)=x^{16}-2x^{14}+2x^{12}-x^8+2x^4-2x^2+1$ then  $E_1=-2,~ E_2=2,~E_3=0,~ E_4=-1$. One of the solutions are
$a_1= \pm \sqrt{2q}, ~a_2=a_3=0,~a_4=q^2$ which has integer solutions only if q is odd power of 2 in which case
$P(X)=X^8 \pm \sqrt{2q}X^7+q^2X^4\pm q^3\sqrt{2q}X+q^4=(X^4\pm\sqrt{2q}X^3+qX^2\pm q\sqrt{2q}X+q^2)(X^4\pm q X+q^2)$ which implies by Tate's theorem that $A$ is not simple. The other solutions of $a_1$ are roots of $Z^2-6q$ (doesn't has integer solution for an odd power of p) or $Z^4-36Z^2q+36q^2$ or $100q^2-28Z^2q+Z^4$ which is  not an integer solutions by $\mod 3 \mod 5$ test.
\end{enumerate}

\subsection{Case  $a_{2i+1}=0$}
If  $a_{2i+1} \neq 0$, then $G(x)=x^{8}+{\frac {{ a_2}x^{6}}{q}}+{\frac {{ a_4}x^{4}}{{q}^{2}}}+{\frac {{ a_2}x^{2}}{q}}+1$, with roots as mth root of unity where,
\begin{enumerate}
\item If $G(x)$ is irreducible then $\phi(m)=8$ which gives $m \in \{15,16,20, 24,30\}$. Since $G(x)$ is irreducible it is degree $8$ irreducible factor with even power terms of $x^m-1$. Comparing we get following
    \begin{enumerate}
    \item If $G(x)=1+x^8$ we get all $a_i=0$ and $P(X)=x^8+q^4$.
    \item If $G(x)=x^8-x^6+x^4-x^2+1$ then $a_2= -q$ and $a_4=q^2$ this is a possibility.
    \item If $G(x)=x^8-x^4+1$ then $a_2= 0$ and $a_4=-q^2$, this case is already discussed earlier.
    \end{enumerate}
    \item If $G(x)$ is reducible then it is product of two degree $4$ irreducible factors of $x^m-1$ above. By looking at products  with only even terms and comparing with $G(x)$ we get
        \begin{enumerate}
        \item $G(x)=x^8-x^6+2x^4-x^2+1$ which gives $a_2=-q, ~a_4=2q^2$. Therefore  $P(X)=X^8-X^6q+2X^4q^2-q^3X^2+q^4=(X^4+q^2)(X^4-qX^2+q^2)$ which is not irreducible.
        \item $G(x)=x^8-2x^6+3x^4-2x^2+1$ which gives $a_2=-2q, ~a_4=3q^2$. Therefore $P(X)=X^8-2X^6q+3X^4q^2-2q^3X^2+q^4=(X^4-qX^2+q^2)^2$ which is not irreducible.
        \item $G(x)=x^8+x^6+x^4+x^2+1$ which gives $a_2=q, ~a_4=q^2$ which gives $P(X)= X^8+ qX^6+q^2X^4+q^3X^2+q^4$ which is irreducible and is a possibility when $p \neq 5$. When $p=5$ then
            $P(X)=(X^4-\sqrt{5q}X^3+3qX^2-q\sqrt{5q}X+q^2)(X^4+\sqrt{5q}X^3+3qX^2+q\sqrt{5q}X+q^2)$ which is not irreducible.
        \item $G(x)=1+2x^4+x^8$   which gives $a_2=0,~a_4=2q^2$ in which case $P(X)=(X^4+q^2)^2$ is not irreducible.
        \end{enumerate}
\end{enumerate}
\subsection{$P(X)$ is reducible}
If $P(X)$ is irreducible then $P(X)=h(X)^e$ where $e|4$ therefore $e=2$ or $4$. 
If $e=4$ then $h(X)=X^2+aX \pm q$ and it is not possible for  $P(X)$ to correspond to a simple abelian variety as already discussed in dimension 1 case.
If $e=2$ then $h(X)=(X^4+bX^3+cX^2+dX \pm q^2)$. On doing the transformation as $\frac{X}{\sqrt{q}}\rightarrow t $ we get $G(t)=(t^4+\frac{c}{q}t^2 \pm 1)-\frac{1}{\sqrt{q}}(bt^3+\frac{d}{q}t)$ or $H(t)=t^8+(\frac{2c}{q}-\frac{b^2}{q})t^6+(\frac{c^2}{q^2}-\frac{2bd}{q^2}+2)t^4+(\frac{2c}{q}- \frac{d^2}{q^3})t^2 \pm1$. We have following cases
\begin{enumerate} 
\item If constant term of $h(X)$ has minus sign, then $b~,c~d$ have no integer solutions.
\item If constant term of $h(X)$ has plus sign, then $h(X)$ corresponds to characteristic polynomial of dimension  $2$ supersingular abelian variety given in the list see \ref{D2}. Since all of them occur, by Tate's theorem the abelian variety corresponding to $P(X)$ is not simple. 
\end{enumerate}
Hence we have the following theorem.
\begin{theorem} The characteristic polynomial of the a simple supersingular abelian variety of dimension $4$ over $\mathbb{F}_q$ ($q=p^n$, $n$ odd) is one of the following
\begin{enumerate}
\item $p=2: X^8\pm \sqrt{2q}X^7+qX^6-q^2X^4+q^3X^2 \pm q^3\sqrt{2q}X+q^4$
\item $p=3: X^8\pm \sqrt{3q}X^7+2qX^6\pm q \sqrt{3q}X^5+q^2X^4\pm q^2 \sqrt{3q}X^3+2q^3X^2\pm q^3\sqrt{3q}X^+q^4$
\item $X^8+q^4$
\item $X^8- qX^6+q^2X^4- q^3X^2+q^4$
\item $ p \neq 5: X^8+ qX^6+q^2X^4+q^3X^2+q^4$
\item $p \neq 2 : X^8-q^2X^4+q^4$
\item $p=5: X^8\pm \sqrt{5q}X^7+2qX^6\pm q \sqrt{5q}X^5+3q^2X^4\pm q^2 \sqrt{5q}X^3+2q^3X^2\pm q^3\sqrt{5q}X^+q^4$
\end{enumerate}
\end{theorem}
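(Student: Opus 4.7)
The plan is to follow the procedure of Section 2.3 specialized to $g=4$, and to split the analysis into the three branches used throughout the earlier dimensions: (i) $P(X)$ irreducible with some $a_{2i+1}\neq 0$; (ii) $P(X)$ irreducible with all $a_{2i+1}=0$; and (iii) $P(X)$ reducible. With $P(X) = X^8 + a_1X^7 + a_2X^6 + a_3X^5 + a_4X^4 + a_3qX^3 + a_2q^2X^2 + a_1q^3X + q^4$, the substitution $t = X/\sqrt{q}$ produces the polynomial $G(t)\in\mathbb{Q}(\sqrt{q})[t]$ from Proposition \ref{S2}, and I would work throughout with $G(t)$ and with $H(t) = G(t)G(t)^\sigma$.

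In branch (i), if $G(t)$ is irreducible over $\mathbb{Q}(\sqrt{q})$ then by Theorem \ref{Odd}(1), $H(t)$ is an irreducible cyclotomic polynomial of degree $4g = 16$, so I would enumerate the values $m\in\{17,32,34,40,48,60\}$ with $\phi(m)=16$, list the degree-$16$ factors of $t^m-1$, and discard those containing odd-degree terms. For each remaining candidate I would equate coefficients with the explicit expansion of $H(t)$, reading off the four nontrivial even-index coefficients $E_1, E_2, E_3, E_4$ as rational expressions in $a_1,\ldots,a_4,q$, and then solve the resulting polynomial system for integer tuples $(a_1,a_2,a_3,a_4)$, using the mod $3$/mod $5$ lemma and Eisenstein's criterion to eliminate the numerous systems with no integer solutions. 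If instead $G(t) = F_1 F_2$ is reducible, Proposition \ref{Red} forces $\deg F_i = 4$; I would enumerate all products $F_1F_1^\sigma F_2 F_2^\sigma$ having only even-degree terms, where by Theorem \ref{Odd}(2) each $F_iF_i^\sigma$ is either an irreducible cyclotomic factor of degree $8$ (with $m\in\{15,16,20,24,30\}$) or, when $F_i\in\mathbb{Q}[t]$, a product of two degree-$4$ cyclotomic factors (with $m\in\{5,8,10,12\}$), and repeat the coefficient matching. Branch (ii) is simpler: $G(t)$ already lies in $\mathbb{Q}[t]$ with only even powers of $t$, so Proposition \ref{Even} identifies it directly as either a degree-$8$ irreducible cyclotomic polynomial ($\phi(m)=8$) or a product of two degree-$4$ cyclotomic polynomials ($\phi(m)=4$), and $a_2,a_4$ are read off by comparing coefficients.

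In branch (iii), $P(X) = h(X)^e$ with $e \mid 2g = 8$, and the constant-term parity argument ($h_0^e = q^g = p^{ng}$ with $n$ odd) forces $e\mid g = 4$, so $e\in\{2,4\}$. For $e=4$, $h$ is a dimension-$1$ supersingular Weil polynomial from Theorem \ref{D1}; for $e=2$, $h$ is a dimension-$2$ supersingular Weil polynomial from Theorem \ref{D2}. In either case Theorem \ref{Tate} shows the corresponding abelian variety is isogenous to a nontrivial power of a lower-dimensional simple variety and hence is not simple, so no new characteristic polynomial arises here. The same remark applies to any $P(X)$ that survives branches (i) or (ii) but factors over $\mathbb{Z}[X]$ (which happens repeatedly in the coefficient-matching step, as already observed in lower dimensions): Theorem \ref{Tate} forces it to be discarded. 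The main obstacle is not conceptual but combinatorial: each matching step in the degree-$16$ case yields polynomial systems in four unknowns over $\mathbb{Z}[q]$, and the bulk of the work is the parametric integer-root analysis that rules out spurious solutions, leaving exactly the seven families in the statement.
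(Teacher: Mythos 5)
Your proposal follows essentially the same route as the paper: the same three-way split (irreducible with some $a_{2i+1}\neq 0$, irreducible with all $a_{2i+1}=0$, reducible $P=h^e$), the same lists $m\in\{17,32,34,40,48,60\}$ and $m\in\{15,16,20,24,30\}$, $\{5,8,10,12\}$, the same coefficient-matching via $E_1,\dots,E_4$, and the same elimination tools (mod $3$/mod $5$ test, Eisenstein, Tate's theorem). The only detail you gloss over is that in the reducible branch $h_0=\pm q^{g/e}$, so the negative-constant-term subcase must be separately ruled out before invoking Theorems \ref{D1} and \ref{D2}, which the paper does explicitly.
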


\begin{theorem}
 All of the polynomials listed above occur as characteristic polyonomial of Frobenuis of dimension 4.
\end{theorem}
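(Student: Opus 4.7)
The plan is to use the Honda-Tate correspondence (Theorem 1.2) in the reverse direction, together with the dimension formula in Theorem 3.1. For each polynomial $P(X)$ on the list I would select a root $\pi = \sqrt{q}\zeta$ with $\zeta$ a root of unity; by Theorem 1.3 such a $\pi$ is automatically a supersingular Weil $q$-number, so Honda-Tate produces a unique (up to isogeny) simple abelian variety $A/\mathbb{F}_q$ with $\pi_A = \pi$. The task then reduces to verifying that $\dim A = 4$ and that $P(X)$ itself (rather than some proper factor or different power) is the characteristic polynomial of Frobenius on $A$.

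For each polynomial the verification proceeds as follows. Let $h(X) \in \mathbb{Z}[X]$ be the minimal polynomial of $\pi$ and write $P(X) = h(X)^e$, so $[\mathbb{Q}(\pi):\mathbb{Q}] = \deg h = 8/e$. Factor $h(X)$ in $\mathbb{Q}_p[X]$ using Theorem 3.4, tracking both the cyclotomic part of $\zeta$ and the behaviour of $\sqrt{q}$ in $\mathbb{Q}_p$. For each $\mathbb{Q}_p$-irreducible factor $f_i(X)$, read off the local invariant $v_p(f_i(0))/v_p(q) \bmod \mathbb{Z}$ at the associated prime $\mathfrak{p}_i$ of $\mathbb{Q}(\pi)$. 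The least common multiple of the orders of these invariants in $\mathbb{Q}/\mathbb{Z}$ equals the index $[\mathrm{End}_k(A)\otimes\mathbb{Q}:\mathbb{Q}(\pi)]^{1/2}$, and Theorem 3.1 forces $2\dim A = (\text{index})\cdot \deg h$; we check that this product equals $8$.

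The seven cases split naturally into two groups. Cases (3), (4), (5), (6) are the cyclotomic cases, with $\pi$ lying in a cyclotomic field $\mathbb{Q}(\zeta_m)$ for $m \in \{16, 20, 24\}$; the restrictions $p \neq 5$ in (5) and $p \neq 2$ in (6) precisely ensure $\sqrt{q} \notin \mathbb{Q}(\zeta_m)$ so that $\deg h = 8$, and the local invariants above $p$ are trivial, yielding $\mathrm{End}_k(A)\otimes\mathbb{Q} = \mathbb{Q}(\pi)$ and $e = 1$. Cases (1), (2), (7) have $\sqrt{pq}$ coefficients; here one expects $\deg h = 4$ and $P = h^2$, with the unique prime of $\mathbb{Q}(\pi)$ above $p$ carrying local invariant of order $2$, producing a quaternion endomorphism algebra of index $2$. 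In both scenarios $(\text{index})\cdot \deg h = 8$, giving $\dim A = 4$.

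The main obstacle will be the $p$-adic factorization of $h(X)$ and the local invariant computation in cases (1), (2), (7). In these cases $\mathbb{Q}(\pi)$ is a non-trivial extension of a cyclotomic field by $\sqrt{q}$, and locating the primes above $p$ together with their ramification requires a careful analysis of the interaction between the cyclotomic ramification (governed by Theorem 3.4) and the behaviour of $\sqrt{q}$ in $\mathbb{Q}_p$, which depends on whether $p \mid q$ and on the parity of $n$. Once these local data are in hand, Theorem 3.1 delivers $\dim A = 4$ mechanically; the calculations are routine if tedious and mirror those already carried out by Waterhouse and Maisner-Nart in dimensions $1$ and $2$.
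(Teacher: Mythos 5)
Your general framework --- Honda--Tate in reverse, the dimension formula of Theorem \ref{dim}, and local invariants read off from the factorization of $P(X)$ over $\mathbb{Q}_p$ --- is exactly the one the paper uses, and your plan for the four polynomials without odd-degree terms, namely (3), (4), (5), (6), matches the paper's argument: there $P$ is irreducible of degree $8$, the substitution $y=X^2/q$ reduces everything to a cyclotomic polynomial $\Phi_m(y)$ with $m\in\{8,10,5,12\}$, Theorem \ref{padics} gives the factorization over $\mathbb{Q}_p$ according to the class of $p$ modulo $m$, every factor $f_i$ satisfies $v_p(f_i(0))/v_p(q)\in\mathbb{Z}$, so all invariants vanish, the index is $1$, and $2\dim A=1\cdot 8$. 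The paper actually carries out these factorizations case by case, which you defer, but your outline there is sound.

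The genuine gap is your description of cases (1), (2), (7). You assert that for these one expects $\deg h=4$, $P=h^2$, and a quaternion endomorphism algebra whose prime above $p$ carries an invariant of order $2$. This is false: each of these polynomials is irreducible over $\mathbb{Q}$ of degree $8$ (the paper checks this by reducing modulo $3$ for $p=2,5$ and modulo $2$ for $p=3$; for $p=3$ or $5$ one sees it immediately, since a square $(X^4+bX^3+\cdots)^2$ would force $2b=\sqrt{pq}=p^{(n+1)/2}$, which is odd). Hence $\deg h=8$ and $e=1$ just as in the other cases, and the candidates of the form $h^2$ were already discarded in the paper's derivation precisely because they contradict irreducibility. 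The endomorphism algebra is again commutative: the paper shows $P(X)$ remains irreducible over $\mathbb{Q}_p$ (for $p=2,3$ by a degree count through $\mathbb{Q}_p(\zeta_{40})$ resp.\ $\mathbb{Q}_p(\zeta_{60})$, and for $p=5$ by showing the two conjugate quartic factors over $\mathbb{Q}_5(\sqrt{5})$ do not descend to $\mathbb{Q}_5$), so there is a single invariant $v_p(q^4)/v_p(q)=4\equiv 0 \bmod \mathbb{Z}$, the index is $1$, and $2\dim A=1\cdot 8$. Your route for these three cases fails at the first step, since $P$ cannot be written as $h(X)^2$ with $h\in\mathbb{Z}[X]$; the numerical coincidence that $2\cdot 4$ also equals $8$ does not rescue the argument, because the isogeny class you would be constructing (with characteristic polynomial $h^2$ for some quartic $h$) is not the one attached to the listed polynomial.
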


\begin{proof}
\begin{enumerate}
\item If $P(X)=X^8+q^4$, substitute  $y=qX^2$. Then we get $P(X)=q^4(y^4+1)$. The polynomial $y^4+1$ is 8th cyclotomic polynomial.
\begin{enumerate}
\item If $p \equiv 1 \mod 8 $ then $y^4+1=\displaystyle\prod_{i=1}^{4}(y-\alpha_i)$ over  $\mathbb{Q}_p$ with $v_p(\alpha_i)=0$. Since $n$ is odd we get\\
 $P(X)=\displaystyle\prod_{i=1}^{4}(X^2-\alpha_i q)$. Hence we have 4 invariants with $inv_{\mathfrak{p}_i} (End_k (A)\otimes \mathbb{Q}) \equiv 0 \mod \mathbb{Z}$ which shows the $\dim A= 4$.
\item  If $p \equiv 3,5,7  \mod 8$  then $y^4+1=\displaystyle\prod_{i=1}^{2}(y^2+\beta_iy+\alpha_i)$ with $v_p(\alpha_i)=0$. Since $n$ is odd we get\\
 $P(X)=\displaystyle\prod_{i=1}^{2}(X^4+\beta_i X^2+\alpha_i q^2)$. Hence we have 2 invariants with $inv_{\mathfrak{p}_i} (End_k (A)\otimes \mathbb{Q}) \equiv 0 \mod \mathbb{Z}$ which shows the $\dim A= 4$.

\end{enumerate} 
\item If $P(X)=X^8-q^2X^4+q^4$, substitute  $y=qX^2$. Then we get $P(X)=q^4(y^4-y^2+1)$. The polynomial $y^4-y^2+1$ is 12th cyclotomic polynomial.
\begin{enumerate}
\item If $p \equiv 1 \mod 12 $ then $y^4+1=\displaystyle\prod_{i=1}^{4}(y-\alpha_i)$ over  $\mathbb{Q}_p$ with $v_p(\alpha_i)=0$. Since $n$ is odd we get\\
 $P(X)=\displaystyle\prod_{i=1}^{4}(X^2-\alpha_i q)$. Hence we have 4 invariants with $inv_{\mathfrak{p}_i} (End_k (A)\otimes \mathbb{Q}) \equiv 0 \mod \mathbb{Z}$ which shows the $\dim A= 4$.
\item  If $p \equiv 5,7,11  \mod 12$  then $y^4+1=\displaystyle\prod_{i=1}^{2}(y^2+\beta_i+\alpha_i)$ with $v_p(\alpha_i)=0$. Since $n$ is odd we get\\
 $P(X)=\displaystyle\prod_{i=1}^{2}(X^4+\beta_i X^2+\alpha_i q^2)$. Hence we have 2 invariants with $inv_{\mathfrak{p}_i} (End_k (A)\otimes \mathbb{Q}) \equiv 0 \mod \mathbb{Z}$ which shows the $\dim A= 4$.
\end{enumerate} 
\item If $P(X)=X^8- qX^6+q^2X^4- q^3X^2+q^4$, substitute  $y=qX^2$. Then we get $P(X)=q^4(y^4-y^3+y^2-y+1)$. The polynomial $y^4-y^3+y^2-y+1$ is 10th cyclotomic polynomial.
\begin{enumerate}
\item If $p \equiv 1 \mod 10 $ then $y^4-y^3+y^2-y+1=\displaystyle\prod_{i=1}^{4}(y-\alpha_i)$ over  $\mathbb{Q}_p$ with $v_p(\alpha_i)=0$. Since $n$ is odd we get\\
 $P(X)=\displaystyle\prod_{i=1}^{4}(X^2-\alpha_i q)$. Hence we have 4 invariants with $inv_{\mathfrak{p}_i} (End_k (A)\otimes \mathbb{Q}) \equiv 0 \mod \mathbb{Z}$ which shows the $\dim A= 4$.
\item If $p \equiv 3, 7  \mod 10$ then $y^4-y^3+y^2-y+1$ is irreducible, hence there is one invariant with  $inv_{\mathfrak{p}_i} (End_k (A)\otimes \mathbb{Q}) \equiv 0 \mod \mathbb{Z}$ which shows the $\dim A= 4$.
\item  If $p \equiv 9  \mod 10$  then $y^4+1=\displaystyle\prod_{i=1}^{2}(y^2+\beta_i y+\alpha_i)$ with $v_p(\alpha_i)=0$. Since $n$ is odd we get\\
 $P(X)=\displaystyle\prod_{i=1}^{2}(X^4+\beta_i X^2+\alpha_i q^2)$. Hence we have 2 invariants with $inv_{\mathfrak{p}_i} (End_k (A)\otimes \mathbb{Q}) \equiv 0 \mod \mathbb{Z}$ which shows the $\dim A= 4$.
\end{enumerate} 
\item If $P(X)=X^8+ qX^6+q^2X^4+q^3X^2+q^4$, substitute  $y=qX^2$. Then we get $P(X)=q^4(y^4y^3+y^2+y+1)$. The polynomial $y^4+y^3+y^2+y+1$ is 5th cyclotomic polynomial.
\begin{enumerate}
\item If $p \equiv 1 \mod 5 $ then $y^4-y^3+y^2-y+1=\displaystyle\prod_{i=1}^{4}(y-\alpha_i)$ over  $\mathbb{Q}_p$ with $v_p(\alpha_i)=0$. Since $n$ is odd we get
 $P(X)=\displaystyle\prod_{i=1}^{4}(X^2-\alpha_i q)$. Hence we have 4 invariants with $inv_{\mathfrak{p}_i} (End_k (A)\otimes \mathbb{Q}) \equiv 0 \mod \mathbb{Z}$ which shows the $\dim A= 4$.
\item If $p \equiv 2,3  \mod 5$ then $y^4+y^3+y^2+y+1$ is irreducible, hence there is one invariant with  $inv_{\mathfrak{p}_i} (End_k (A)\otimes \mathbb{Q}) \equiv 0 \mod \mathbb{Z}$ which shows the $\dim A= 4$.
\item  If $p \equiv 4  \mod 5 $  then $y^4+1=\displaystyle\prod_{i=1}^{2}(y^2+\beta_iy+\alpha_i)$ with $v_p(\alpha_i)=0$. Since $n$ is odd we get\\
 $P(X)=\displaystyle\prod_{i=1}^{2}(X^4+\beta_i X^2+\alpha_i q^2)$. Hence we have 2 invariants with $inv_{\mathfrak{p}_i} (End_k (A)\otimes \mathbb{Q}) \equiv 0 \mod \mathbb{Z}$ which shows the $\dim A= 4$.
\end{enumerate}
\item If $P(X)= X^8\pm \sqrt{2q}X^7+qX^6-q^2X^4+q^3X^2 \pm q^3\sqrt{2q}X+q^4$, $p =2$ is irreducible in $\frac{\mathbb{Z}}{3\mathbb{Z}}$, hence irreducible over $\mathbb{Q}$.
$P(X/\sqrt{q})= X^8\pm \sqrt{2}X^7+X^6-X^4+X^2 \pm \sqrt{2}X+q^4$ is irreducible over $\mathbb{Q}(\sqrt{2})[X]$ with one of the roots as $\zeta_{40}$ and splitting field $\mathbb{Q}(\zeta_{40})$. 
We have $[ \mathbb{Q}(\sqrt{2}):\mathbb{Q}(\zeta_{40})]=8$. Passing through completion  and using theorem \ref{padics} we get,\\
\begin{displaymath}
[\mathbb{Q}_{2}:\mathbb{Q}_{2}(\sqrt{2})][\mathbb{Q}_{2}(\sqrt{2}):\mathbb{Q}_{2}(\zeta_{40})]=[\mathbb{Q}_{2}:\mathbb{Q}_{2}(\zeta_{40})]=16.
\end{displaymath}
Therefore $[ \mathbb{Q}_2(\sqrt{2}):\mathbb{Q}_2(\zeta_{40})]=8$. But $P(X/\sqrt{q}) \in \mathbb{Q}_{2}(\sqrt{2})$ has degree $8$  and $\zeta_{40}$ as one of the  roots. 
This implies  $P(X/\sqrt{q})$ and hence $P(X)$ are irreducible over $\mathbb{Q}(\sqrt{2})$ which implies $P(X)$ is irredicible over  hence over $\mathbb{Q}_{2}$. Hence we have one invariant with $inv_{\mathfrak{p}_i} (End_k (A)\otimes \mathbb{Q}) \equiv 0 \mod \mathbb{Z}$  which shows the $\dim A=4$.
\item If $P(X)=X^8\pm \sqrt{3q}X^7+2qX^6\pm q \sqrt{3q}X^5+q^2X^4\pm q^2 \sqrt{3q}X^3+2q^3X^2\pm q^3\sqrt{3q}X+q^4$ where $p=3$, is irreducible over  $\frac{\mathbb{Z}}{2\mathbb{Z}}$, hence over $\mathbb{Q}$ with splitting field, $\mathbb{Q}(\zeta_{60})$. The similiar argument above shows that $P(X)$
 corresponds to abelian variety of dimension $4$.
\item  If $P(X)=X^8\pm \sqrt{5q}X^7+2qX^6\pm q \sqrt{5q}X^5+3q^2X^4\pm q^2 \sqrt{5q}X^3+2q^3X^2\pm q^3\sqrt{5q}X^+q^4$, where  $p=5$ is irreducible in $\frac{\mathbb{Z}}{3\mathbb{Z}}$, hence irreducible over $\mathbb{Q}$ with splitting field $\mathbb{Q}(\zeta_{15})$. But $P(X/\sqrt{q})$ is reducible over $\mathbb{Q}(\sqrt{5})[X]$ with $P(X/ \sqrt{q})=F_1(X/ \sqrt{q})F_2(X/ \sqrt{q})$ each irreducible over  $\mathbb{Q}(\sqrt{5})[X]$ with roots $\zeta_{30}$ and $\zeta_{15}$ respectively. But $[ \mathbb{Q}(\sqrt{5}):\mathbb{Q}(\zeta_{15})]=4$. Passing through completion we have $[ \mathbb{Q}_5(\sqrt{5}):\mathbb{Q}_5(\zeta_{15})]=4$ with root of $F_1(X/ \sqrt{q})$ and $F_2(X/ \sqrt{q})$ as $\zeta_{15}$ and $-\zeta_{15}$ respectively and since $\deg F_i=4$,  $F_i(X/ \sqrt{q})$ are irreducible over  $\mathbb{Q}_5(\sqrt{5})$. $P(X)=F_1(X)F_2(X)$. But $F_i$  have  coefficients from $\mathbb{Q}_5(\sqrt{5}) / \mathbb{Q}_5$. Hence $P(X)$ is irreducible over $\mathbb{Q}_5$. This implies $\dim A=4$.
\end{enumerate}
\end{proof}
\section{Dimension 5}
The characteristic polynomial of Frobenius of an abelian variety of dimension $5$ is given by
\begin{displaymath}
P(X)=X^{10}+a_{1}X^9+a_{2}X^8+a_{3}X^7+a_{4}X^6+a_5X^5+a_4qaX^4+a_{3}q^2X^3+a_{2}q^3X^2+a_{1}q^4X+q^5.
\end{displaymath}
If $P(X)$ is irreducible then we have following cases\\

\subsection{Case $a_{2i+1} \neq 0$} 
If $a_{2i+1} \neq 0$ then we have following cases,\\
\emph{case 1.} If $G(x)$ is irreducible then

\begin{eqnarray*}\nonumber 
H(x)=x^{20}+ \left( -{\frac {{{ a_1}}^{2}}{q}}+2{\frac {{ a_2}}{q}} \right) x^{18}+ \left( -2{\frac {{ a_3}{ a_1}}{{q}^{2}}}+
{\frac {{{ a_2}}^{2}}{{q}^{2}}}+2{\frac {{ a_4}}{{q}^{2}}}\right) x^{16}+\\
 \left( -2{\frac {{ a_5}{ a_1}}{{q}^{3}}}+2{\frac {{ a_4}{ a_2}}{{q}^{3}}}+2{\frac {{ a_4}}{{q}^{2}}}
-{\frac {{{ a_3}}^{2}}{{q}^{3}}} \right) x^{14}+ \left( 2{\frac{{ a_2}}{q}}+{\frac {{{ a_4}}^{2}}{{q}^{4}}}+2{\frac {{ a_4}
{ a_2}}{{q}^{3}}}-2{\frac {{ a_3}{ a_1}}{{q}^{2}}}-2{\frac {{ a_5}{ a_3}}{{q}^{4}}} \right) x^{12}+\\
 \left( -2{\frac {{{ a_1}}^{2}}{q}}+2{\frac {{{ a_4}}^{2}}{{q}^{4}}}-2{
\frac {{{ a_3}}^{2}}{{q}^{3}}}+2-{\frac {{{ a_5}}^{2}}{{q}^{5}}}+2{\frac {{{ a_2}}^{2}}{{q}^{2}}} \right) x^{10}+
 \left( 2{\frac {{ a_2}}{q}}+{\frac {{{ a_4}}^{2}}{{q}^{4}}}+2{\frac {{
 a_4}{ a_2}}{{q}^{3}}}-2{\frac {{ a_3}{ a_1}}{{q}^{2}}}-2{\frac {{ a_5}{ a_3}}{{q}^{4}}} \right) x^{8}\\
+ \left( -2{
\frac {{ a_5}{ a_1}}{{q}^{3}}}+2{\frac {{ a_4}{ a_2}}{{q
}^{3}}}+2{\frac {{ a_4}}{{q}^{2}}}-{\frac {{{ a_3}}^{2}}{{q}^{3}
}} \right) x^{6}+ \left( -2{\frac {{ a_3}{ a_1}}{{q}^{2}}}+{
\frac {{{ a_2}}^{2}}{{q}^{2}}}+2{\frac {{ a_4}}{{q}^{2}}}
 \right) x^{4}+ \left( -{\frac {{{ a_1}}^{2}}{q}}+2{\frac {{
a_2}}{q}} \right) x^{2}+1
\end{eqnarray*} 
whose roots are mth roots of unity where $\phi(m)=4g= 20$ which implies $m \in \{25, 33, 44, 50, 66\}$. Therefore we have\\
\begin{enumerate}
\item $x^{25}-1=(x-1)(1+x^4+x^3+x^2+x)(1+x^{20}+x^{15}+x^{10}+x^5)$
\item $x^{33}-1=(x-1)(1+x^{10}+x^9+x^8+x^7+x^6+x^5+x^4+x^3+x^2+x)(1+x^2+x)(1-x+x^3-x^4+x^6-x^7+x^9-x^{10}+x^{11}-x^{13}+x^{14}-x^{16}+x^{17}-x^{19}+x^{20})$
\item $x^{44}-1=(x-1)(1+x^{10}+x^9+x^8+x^7+x^6+x^5+x^4+x^3+x^2+x)(1+x)(1-x+x^2-x^3+x^4-x^5+x^6-x^7+x^8-x^9+x^{10})(1+x^2)(x^{20}-x^{18}+x^{16}-x^{14}+x^{12}-x^{10}+x^8-x^6+x^4-x^2+1)$
\item $x^{50}-1=(x-1)(1+x^4+x^3+x^2+x)(1+x^{20}+x^{15}+x^{10}+x^5)(1+x)(1-x+x^2-x^3+x^4)(1-x^5+x^{10}-x^{15}+x^{20})$
\item $x^{66}-1=(x-1)(1+x^{10}+x^9+x^8+x^7+x^6+x^5+x^4+x^3+x^2+x)(1+x^2+x)(1-x+x^3-x^4+x^6-x^7+x^9-x^{10}+x^{11}-x^{13}+x^{14}-x^{16}+x^{17}-x^{19}+x^{20})(1+x)(1-x+x^2-x^3+x^4-x^5+x^6-x^7+x^8-x^9+x^{10})(1-x+x^2)(1+x-x^3-x^4+x^6+x^7-x^9-x^{10}-x^{11}+x^{13}+x^{14}-x^{16}-x^{17}+x^{19}+x^{20}).$
\end{enumerate}
Let \\
$E_1:= -{\frac {{{ a_1}}^{2}}{q}}+2{\frac {{ a_2}}{q}}$\\
$E_2:=-2{\frac {{ a_3}{ a_1}}{{q}^{2}}}+{\frac {{{ a_2}}^{2}}{{q}^{2}}}+2{\frac {{ a_4}}{{q}^{2}}}$\\
$E_3:=-2{\frac {{ a_5}{ a_1}}{{q}^{3}}}+2{\frac {{ a_4}{ a_2}}{{q}^{3}}}+2{\frac {{ a_4}}{{q}^{2}}}-{\frac{{{a_3}}^{2}}{{q}^{3}}} $\\
$E_4:=2{\frac{{ a_2}}{q}}+{\frac {{{ a_4}}^{2}}{{q}^{4}}}+2{\frac {{ a_4}{ a_2}}{{q}^{3}}}-2{\frac {{ a_3}{ a_1}}{{q}^{2}}}-2{\frac {{ a_5}{ a_3}}{{q}^{4}}}$\\
$E_5= -2{\frac {{{ a_1}}^{2}}{q}}+2{\frac {{{ a_4}}^{2}}{{q}^{4}}}-2{\frac {{{ a_3}}^{2}}{{q}^{3}}}+2-{\frac {{{a_5}}^{2}}{{q}^{5}}} +2{\frac {{{ a_2}}^{2}}{{q}^{2}}}$.\\

Comparing $H(x)$ with degree $20$ irreducible cyclotomic factor we have only one possibility for $H(x)$
namely, $H(x)=x^{20}-x^{18}+x^{16}-x^{14}+x^{12}-x^{10}+x^8-x^6+x^4-x^2+1$ then $E_ 1=-1,~E_2=1,~E_3 =-1,~E_4 =1,~E_5=-1$ in which case we have following solutions for $a_i$'s
     \begin{enumerate}
     \item $a_1= \sqrt{11q}, ~a_2=5q, ~a_3=q\sqrt{11q},~a_4=-q^2,~ a_5=q^2\sqrt{11q}$ which is possible solution if q is odd power of 11.
     \item Also $a_1$ satisfies $165Z^2q^4+330Z^6q^2-55Z^8q+Z^{10}-11q^5-462Z^4q^3$ and  $14949Z^2q^4+1034Z^6q^2-55Z^8q+Z^{10}-11q^5-7502Z^4q^3$ and $11605Z^2q^4+858Z^6q^2-55Z^8q+Z^{10}-5819q^5-5214Z^4q^3$ which are irreducible if $p \neq 11$ by Eisenstein's criteria, hence has no integer solutions. If $p=11$ then $p=1 \mod 5 $, substituting $q=1$ still gives no integer solutions  for these polynomials, hence they have no integer roots.
     \end{enumerate}
\emph{Case 2.} If $G(x)$ is reducible. Then  by \ref{Red}, $G(x)=F_1(x).F_2(x)$.
 If $F_i \in \mathbb{Q}({\sqrt{q}})[x]\setminus \mathbb{Q}[x]$ then its roots are  mth root of unity  where $\phi(m)=2g=10$  where $m \in \{11,22\}$ in which case $F_i(x)F_i(x)^{\sigma}$ is a irreducible cyclotomic factor of $x^m-1$  of degree $10$ with even powers of $x$. 
We have following factorizations.
\begin{enumerate}
\item$ x^{11}-1=(x-1)(1+x^{10}+x^9+x^8+x^7+x^6+x^5+x^4+x^3+x^2+x)$
\item$ x^{22}-1=(x-1)(1+x^{10}+x^9+x^8+x^7+x^6+x^5+x^4+x^3+x^2+x)(1+x)(1-x+x^2-x^3+x^4-x^5+x^6-x^7+x^8-x^9+x^{10})$
\end{enumerate}

If $F_i \in \mathbb{Q}[x]$ then its roots are  mth root of unity  where  $\phi(m)=g=5$ for which there is no solutions.

 Now $H(x)=F_1 F_{1}^{\sigma}F_2 F_{2}^{\sigma}$ where not both $F_i \in \mathbb{Q}[x]$ as discussed earlier. Since $H(x)$ has only even degree terms, we look at all possibilities $F_1 F_{1}^{\sigma}F_2 F_{2}^{\sigma}$ from above and we just get one possibility namely,
 $H(x)=1+x^2+x^4+x^{6}+x^{8}+x^{10}+x^{12}+x^{14}+x^{16}+x^{18}+x^{20}$ in which case, all $E_i=1$ for $1\leq i\leq  5$, which gives $a_1$ satisfies $z^2-q$ which has no integer solution as q is odd power of $p$. The solutions of $a_1$ are root of $10949Z^2q^4+842Z^6q^2-53Z^8q+Z^{10}-7921q^5-4842Z^4q^3$ or $3221Z^2q^4+442Z^6q^2-37Z^8q+Z^{10}-529q^5-2074Z^4q^3$ or $565Z^2q^4+666Z^6q^2-53Z^8q+Z^{10}-q^5-2202Z^4q^3$ none of which has integer solutions by $\mod 3 \mod 5$ test.\\

\subsection{Case $a_{2i+1}=0$}
 If $a_{2i+1} =0$ then $G(x)=x^{10}+{\frac {{ a_2}x^{8}}{q}}+{\frac {{ a_4}x^{6}}{{q}^{2}}}+{\frac {{ a_4}x^{4}}{{q}^{2}}}+{\frac {{ a_2}x^{2}}{q}}+1$, with roots as mth root of unity where,
\begin{enumerate}
\item If $G(x)$ is irreducible then $\phi(m)=10$ which gives $m \in \{11,22\}$ and $G(x)$ is of degree $10$ and has only even degree terms but $x^{11}-1$ and $x^{22}-1$ have no irreducible factor with only even terms of degree $10$. So this is not possible.
\item  If $G(x)$ is reducible then it is product of two degree $5$ irreducible factors of $x^m-1$ above. But there are no degree $5$ factors so this is not possible.
    \end{enumerate}
\subsection{ $P(X)$ reducible}
If $P(X)$ is reducible then $P(X)= h(X)^e$ where $e|5$ , $e>1$ so $e=5$. In that case $h(X)=X^2+aX+q$ which by argument done for dimension $2$ does not corresponds to simple abelian variety.

\begin{theorem} The characteristic polynomial of the a simple supersingular abelian variety of dimension $5$ over $\mathbb{F}_q$ ($q=p^n$, $n$ odd) is given by\\
\begin{displaymath}
p=11: X^{10}\pm\sqrt{11q}X^9+5qX^8 \pm q\sqrt{11q}X^7-q^2X^6 \pm q^2 \sqrt{11q}X^5-q^3X^4\pm q^3\sqrt{11q}X^3+5q^4X^2\pm q^4\sqrt{11q}X+q^5.
\end{displaymath}
\end{theorem}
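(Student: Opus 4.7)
The plan is to apply the procedure of Section 2 exhaustively, branching first on whether $P(X)$ is irreducible or reducible, then (in the irreducible case) on whether some $a_{2i+1}\neq 0$ or all $a_{2i+1}=0$, and finally on whether $G(x):=P(\sqrt q\,x)/q^5$ is irreducible or reducible over $\mathbb{Q}(\sqrt q)$. In each branch I either recover a candidate Weil polynomial by matching coefficients of $H(x):=G(x)G(x)^\sigma$ (or of $G(x)$ itself, when $a_{2i+1}=0$) against a cyclotomic polynomial, or else I rule the branch out via Eisenstein or the mod $3$, mod $5$ lemma.

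For the main branch---$P(X)$ irreducible, some $a_{2i+1}\neq 0$, $G(x)$ irreducible---Theorem \ref{Odd} forces $H(x)=\Phi_m(x)$ with $\phi(m)=4g=20$, so $m\in\{25,33,44,50,66\}$. Among the listed factorizations the unique degree-$20$ cyclotomic factor all of whose exponents are even is $\Phi_{44}(x)=x^{20}-x^{18}+x^{16}-x^{14}+x^{12}-x^{10}+x^{8}-x^{6}+x^{4}-x^{2}+1$. Equating coefficients yields $E_1=-1,\ E_2=1,\ E_3=-1,\ E_4=1,\ E_5=-1$; I then solve this system for $(a_1,\ldots,a_5)$ (cleanly done in Maple, following the template used for dimensions $3$ and $4$). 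One family of solutions is precisely the polynomial in the theorem statement, valid exactly when $q=11^n$ with $n$ odd; the other candidate $a_1$-values are roots of degree-$10$ polynomials over $\mathbb{Z}[q]$ which, after treating the $p=11$ specialization separately, are eliminated by the mod $3$, mod $5$ test.

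For the remaining branches the analysis is comparatively short. When $G(x)$ is reducible with some $a_{2i+1}\neq 0$, Proposition \ref{Red} gives $G=F_1F_2$ with $\deg F_i=5$; since $\phi(m)=g=5$ has no integer solution, both $F_i$ lie in $\mathbb{Q}(\sqrt q)[x]\setminus\mathbb{Q}[x]$, so each $F_iF_i^\sigma$ has degree $10$ with $\phi(m)=10$, forcing $m\in\{11,22\}$. The unique even-exponent product $\Phi_{11}\Phi_{22}$ yields $E_i=1$ for all $i$, and I show the resulting $a_1$-polynomials have no integer roots by the mod $3$, mod $5$ lemma. When all $a_{2i+1}=0$, Proposition \ref{Even} would demand either an even-exponent cyclotomic factor of degree $10$ (none appears in $\Phi_{11}$ or $\Phi_{22}$) or a pair of degree-$5$ factors (impossible since $\phi(m)=5$ has no solution). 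Finally, if $P(X)=h(X)^e$ is reducible then $e\mid 5$ with $e>1$ forces $e=5$ and $h(X)=X^2+aX+q$; by Theorem \ref{Tate} the corresponding variety is isogenous to a fifth power and hence not simple. The main obstacle throughout is the mod $3$, mod $5$ verification: for each of several degree-$8$ and degree-$10$ polynomials in $z$ over $\mathbb{Z}[q]$, one must confirm that no integer root exists in any residue class of $q$---a mechanical but error-prone step that must be executed carefully lest a genuine supersingular Weil polynomial be missed.
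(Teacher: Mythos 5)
Your enumeration of candidate Weil polynomials tracks the paper's Section 7 essentially line for line: the same branch structure, the same list $m\in\{25,33,44,50,66\}$, the identification of $\Phi_{44}$ as the unique even-exponent degree-$20$ option with $E_1=\cdots=E_5$ alternating $-1,1,-1,1,-1$, the $\Phi_{11}\Phi_{22}$ analysis in the reducible-$G$ branch, and the elimination of the $a_{2i+1}=0$ and $P=h^e$ cases. That half of the argument is fine.

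However, there is a genuine gap: you stop at ``this is the unique candidate supersingular Weil polynomial of degree $10$,'' which is not yet the theorem. The Honda--Tate correspondence attaches to this Weil number $\pi$ a simple abelian variety $A$ whose characteristic polynomial is $P(X)^e$ with $e=[\mathrm{End}_k(A)\otimes\mathbb{Q}:\mathbb{Q}(\pi)]^{1/2}$ and $2\dim A = e\cdot[\mathbb{Q}(\pi):\mathbb{Q}] = 10e$. Until you show $e=1$, you cannot conclude that a simple supersingular abelian variety of dimension $5$ exists at all, let alone that its characteristic polynomial is $P(X)$ rather than some power of it; compare the dimension-$7$ section, where the outcome of the analogous enumeration is that \emph{no} simple variety of that dimension exists. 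The paper's actual proof of this theorem is precisely this missing step: it shows $P(X)$ is irreducible over $\mathbb{Q}_{11}$ (via $[\mathbb{Q}_{11}(\sqrt{11}):\mathbb{Q}_{11}(\zeta_{44})]=10$ and Theorem \ref{padics}), so there is a single prime above $11$ in $\mathbb{Q}(\pi)$ with invariant $v_{11}(P(0))/v_{11}(q)=5\equiv 0 \bmod \mathbb{Z}$; all local invariants then vanish, the division algebra is trivial in the Brauer group, $e=1$, and $\dim A=5$ by Theorem \ref{dim}. You need to add this local computation (or an equivalent verification that the endomorphism algebra is commutative) to complete the proof.
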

\begin{proof} From discussion above we have established   $P(X)= X^{10}\pm\sqrt{11q}X^9+5qX^8 \pm q\sqrt{11q}X^7-q^2X^6 \pm q^2 \sqrt{11q}X^5-q^3X^4\pm q^3\sqrt{11q}X^3+5q^4X^2\pm q^4\sqrt{11q}X+q^5$, $p =11$ is a supersingular  Weil polynomial of degree $10$. Also $P(X)$ irreducible in $\frac{\mathbb{Z}}{2\mathbb{Z}}$, hence irreducible over $\mathbb{Q}$.
$P(X/\sqrt{q})$ is irreducible over $\mathbb{Q}(\sqrt{11})[X]$ with one of the roots as $\zeta_{44}$ and splitting field $\mathbb{Q}(\zeta_{44})$. 
We have $[ \mathbb{Q}(\sqrt{11}):\mathbb{Q}(\zeta_{44})]=10$. Passing through completion  and using theorem \ref{padics} we get,\\
\begin{displaymath}
[\mathbb{Q}_{11}:\mathbb{Q}_{11}(\sqrt{11})][\mathbb{Q}_{11}(\sqrt{11}):\mathbb{Q}_{11}(\zeta_{44})]=[\mathbb{Q}_{11}:\mathbb{Q}_{11}(\zeta_{44})]=20.
\end{displaymath}
Therefore $[ \mathbb{Q}_{11}(\sqrt{11}):\mathbb{Q}_{11}(\zeta_{44})]=10$. But $P(X/\sqrt{q}) \in \mathbb{Q}_{11}(\sqrt{11})$ has degree $10$  and $\zeta_{44}$ as one of the  roots. 
This implies  $P(X/\sqrt{q})$ and hence $P(X)$ are irreducible over $\mathbb{Q}(\sqrt{11})$ which implies $P(X)$ is irredicible over $\mathbb{Q}_{11}$. Hence we have one invariant with $inv_{\mathfrak{p}_i} (End_k (A)\otimes \mathbb{Q}) \equiv 0 \mod \mathbb{Z}$  which shows the $\dim A=5$.
\end{proof}
\section{Dimension 6}
The characteristic polynomial of Frobenius of an abelian variety of dimension $6$ is given by
\begin{displaymath}
P(X)=X^{12}+a_{1}X^{11}+a_{2}X^{10}+a_{3}X^9+a_{4}X^8+a_5X^7+a_6X^6+qa_5X^5+q^2X^4+q^3a_{3}X^3+q^4a_{2}X^2+q^5a_{1}X+q^6.
\end{displaymath}
If $P(X)$ is irreducible  then we have following cases.\\
\subsection{ Case $a_{2i+1} \neq 0$}
If $a_{2i+1} \neq 0$ then we have following cases\\

\emph{Case 1.} If $G(x)$ is irreducible then\\

\begin{multline}\nonumber
H(x)={x}^{24}+ \left( 2{\frac {{ a_2}}{q}}-{\frac {{{ a_1}}^{2}}{q}}
 \right) {x}^{22}+ \left( -2{\frac {{ a_3}{ a_1}}{{q}^{2}}}+{
\frac {{{ a_2}}^{2}}{{q}^{2}}}+2{\frac {{ a_4}}{{q}^{2}}}
 \right) {x}^{20}+ \left( 2{\frac {{ a_6}}{{q}^{3}}}-{\frac {{{
 a_3}}^{2}}{{q}^{3}}}-2{\frac {{ a_5}{ a_1}}{{q}^{3}}}+2{
\frac {{ a_4}{ a_2}}{{q}^{3}}} \right) {x}^{18}+\\ \left( -2{
\frac {{ a_5}{ a_3}}{{q}^{4}}}+2{\frac {{ a_4}}{{q}^{2}}}+2
{\frac {{ a_6}{ a_2}}{{q}^{4}}}-2{\frac {{ a_5}{ a_1}}
{{q}^{3}}}+{\frac {{{ a_4}}^{2}}{{q}^{4}}} \right) {x}^{16}+ \left(
2{\frac {{ a_2}}{q}}-2{\frac {{ a_3}{ a_1}}{{q}^{2}}}-2{
\frac {{ a_5}{ a_3}}{{q}^{4}}}-{\frac {{{ a_5}}^{2}}{{q}^{5}}}
+2{\frac {{ a_4}{ a_2}}{{q}^{3}}}+2{\frac {{ a_6}{ a_4
}}{{q}^{5}}} \right) {x}^{14}+\\
 \left( 2{\frac {{{ a_2}}^{2}}{{q}^{
2}}}+{\frac {{{ a_6}}^{2}}{{q}^{6}}}+2{\frac {{{ a_4}}^{2}}{{q}^
{4}}}-2{\frac {{{ a_5}}^{2}}{{q}^{5}}}-2{\frac {{{ a_1}}^{2}}{
q}}+2-2{\frac {{{ a_3}}^{2}}{{q}^{3}}} \right) {x}^{12}+ \left( 2
{\frac {{ a_2}}{q}}-2{\frac {{ a_3}{ a_1}}{{q}^{2}}}-2{
\frac {{ a_5}{ a_3}}{{q}^{4}}}-{\frac {{{ a_5}}^{2}}{{q}^{5}}}
+2{\frac {{ a_4}{ a_2}}{{q}^{3}}}+2{\frac {{ a_6}{ a_4
}}{{q}^{5}}} \right) {x}^{10}+\\ \left( -2{\frac {{ a_5}{ a_3}}{
{q}^{4}}}+2{\frac {{ a_4}}{{q}^{2}}}+2{\frac {{ a_6}{ a_2}
}{{q}^{4}}}-2{\frac {{ a_5}{ a_1}}{{q}^{3}}}+{\frac {{{ a_4}
}^{2}}{{q}^{4}}} \right) {x}^{8}+ \left( 2{\frac {{ a_6}}{{q}^{3}}
}-{\frac {{{ a_3}}^{2}}{{q}^{3}}}-2{\frac {{ a_5}{ a_1}}{{q}
^{3}}}+2{\frac {{ a_4}{ a_2}}{{q}^{3}}} \right) {x}^{6}+\\
 \left( -2{\frac {{ a_3}{ a_1}}{{q}^{2}}}+{\frac {{{ a_2}}^{
2}}{{q}^{2}}}+2{\frac {{ a_4}}{{q}^{2}}} \right) {x}^{4}+
 \left( 2{\frac {{ a_2}}{q}}-{\frac {{{ a_1}}^{2}}{q}} \right) {x}^{2}+1
\end{multline}

whose roots are mth roots of unity where $\phi=4g= 24$ which implies $m \in \{35, 39, 45, 52, 56, 70, 72, 78, 84, 90\}$. Each of which has following factorization.
\begin{enumerate}
\item $x^{35}-1 = (x-1)(1+x^6+x^5+x^4+x^3+x^2+x)(1+x^4+x^3+x^2+x)(1-x+x^5-x^6+x^7-x^8+x^{10}-x^{11}+x^{12}-x^{13}+x^{14}-x^{16}+x^{17}-x^{18}+x^{19}-x^{23}+x^{24})$
\item $x^{39} -1 =(x-1)(1+x^{12}+x^{11}+x^{10}+x^9+x^8+x^7+x^6+x^5+x^4+x^3+x^2+x)(1+x^2+x)(1-x+x^3-x^4+x^6-x^7+x^9-x^{10}+x^{12}-x^{14}+x^{15}-x^{17}+x^{18}-x^{20}+x^{21}-x^{23}+x^{24})$

\item $x^{45}-1=(x-1)(1+x^4+x^3+x^2+x)(1+x^2+x)(1-x+x^3-x^4+x^5-x^7+x^8)(x^6+x^3+1)(x^{24}-x^{21}+x^{15}-x^{12}+x^9-x^3+1)$
\item $x^{52}-1 =(x-1)(1+x^{12}+x^{11}+x^{10}+x^9+x^8+x^7+x^6+x^5+x^4+x^3+x^2+x)(1+x)(1-x+x^2-x^3+x^4-x^5+x^6-x^7+x^8-x^9+x^{10}-x^{11}+x^{12})(1+x^2)
    (x^{24}-x^{22}+x^{20}-x^{18}+x^{16}-x^{14}+x^{12}-x^{10}+x^8-x^6+x^4-x^2+1)$
\item $x^{56}-1=(x-1)(1+x^6+x^5+x^4+x^3+x^2+x)(1+x)(1-x+x^2-x^3+x^4-x^5+x^6)(1+x^2)(x^{12}-x^{10}+x^8-x^6+x^4-x^2+1)(1+x^4)(x^{24}-x^{20}+x^{16}-x^{12}+x^8-x^4+1)$
\item $x^{70}-1=(x-1)(1+x^6+x^5+x^4+x^3+x^2+x)(1+x^4+x^3+x^2+x)(1-x+x^5-x^6+x^7-x^8+x^{10}-x^{11}+x^{12}-x^{13}+x^{14}-x^{16}+x^{17}-x^{18}+x^{19}-x^{23}+x^{24})
    (1+x)(1-x+x^2-x^3+x^4-x^5+x^6)(1-x+x^2-x^3+x^4)
    (1+x-x^5-x^6-x^7-x^8+x^{10}+x^{11}+x^{12}+x^{13}+x^{14}-x^{16}-x^{17}-x^{18}-x^{19}+x^{23}+x^{24})$
\item $x^{72}-1=(x-1)(1+x^2+x)(x^6+x^3+1)(1+x)(1-x+x^2)(1-x^3+x^6)(1+x^2)(x^4-x^2+1)(x^{12}-x^6+1)(1+x^4)(x^8-x^4+1)(x^{24}-x^{12}+1)$
\item $x^{78}-1=(x-1)(1+x^{12}+x^{11}+x^{10}+x^9+x^8+x^7+x^6+x^5+x^4+x^3+x^2+x)(1+x^2+x)(1-x+x^3-x^4+x^6-x^7+x^9-x^{10}+x^{12}-x^{14}+x^{15}-x^{17}+x^{18}-x^{20}+x^{21}-x^{23}+x^{24})
    (1+x)
    (1-x+x^2-x^3+x^4-x^5+x^6-x^7+x^8-x^9+x^{10}-x^{11}+x^{12})(1-x+x^2)
    (1+x-x^3-x^4+x^6+x^7-x^9-x^{10}+x^{12}-x^{14}-x^{15}+x^{17}+x^{18}-x^{20}-x^{21}+x^{23}+x^{24})$
\item $x^{84}-1=(x-1)(1+x^6+x^5+x^4+x^3+x^2+x)(1+x^2+x)(1-x+x^3-x^4+x^6-x^8+x^9-x^{11}+x^{12})(1+x)(1-x+x^2-x^3+x^4-x^5+x^6)(1-x+x^2)(1+x-x^3-x^4+x^6-x^8-x^9+x^{11}+x^{12})(1+x^2)
    (x^{12}-x^{10}+x^8-x^6+x^4-x^2+1)
    (x^4-x^2+1)(x^{24}+x^{22}-x^{18}-x^{16}+x^{12}-x^8-x^6+x^2+1)$
\item $x^{90}-1=(x-1)(1+x^4+x^3+x^2+x)(1+x^2+x)(1-x+x^3-x^4+x^5-x^7+x^8)(x^6+x^3+1)(x^{24}-x^{21}+x^{15}-x^{12}+x^9-x^3+1)(1+x)(1-x+x^2-x^3+x^4)(1-x+x^2)(1+x-x^3-x^4-x^5+x^7+x^8)(1-x^3+x^6)(x^{24}+x^{21}-x^{15}-x^{12}-x^9+x^3+1).$
\end{enumerate}
Let\\
$E_1:= 2{\frac {{ a_2}}{q}}-\frac {{{ a_1}}^{2}}{q}$\\
$E_2:= -2{\frac {{ a_3}{ a_1}}{{q}^{2}}}+{\frac {{{ a_2}}^{2}}{{q}^{2}}}+2{\frac {{ a_4}}{{q}^{2}}}$\\
$E_3:= 2{\frac {{ a_6}}{{q}^{3}}}-{\frac {{{a_3}}^{2}}{{q}^{3}}}-2{\frac {{ a_5}{ a_1}}{{q}^{3}}}+2{\frac {{ a_4}{ a_2}}{{q}^{3}}}$ \\
$E_4:= -2{\frac {{ a_5}{ a_3}}{{q}^{4}}}+2{\frac {{ a_4}}{{q}^{2}}}+2{\frac {{ a_6}{ a_2}}{{q}^{4}}}-2{\frac {{ a_5}{ a_1}}{{q}^{3}}}+{\frac {{{ a_4}}^{2}}{{q}^{4}}}$ \\
$E_5:=2{\frac {{ a_2}}{q}}-2{\frac {{ a_3}{ a_1}}{{q}^{2}}}-2{\frac {{ a_5}{ a_3}}{{q}^{4}}}-{\frac {{{ a_5}}^{2}}{{q}^{5}}}+2{\frac {{ a_4}{ a_2}}{{q}^{3}}}+2{\frac {{ a_6}{ a_4}}{{q}^{5}}}$ \\
$E_6:= 2{\frac {{{ a_2}}^{2}}{{q}^{2}}}+{\frac {{{ a_6}}^{2}}{{q}^{6}}}+2{\frac {{{ a_4}}^{2}}{{q}^{4}}}-2{\frac {{{ a_5}}^{2}}{{q}^{5}}}-2{\frac {{{ a_1}}^{2}}{
q}}+2-2{\frac {{{ a_3}}^{2}}{{q}^{3}}}$\\

Comparing $H(x)$ with degree $24$ irreducible cyclotomic factor we have following possibilities for $H(x)$

 \begin{enumerate}
\item  If $H(x)=x^{24}-x^{22}+x^{20}-x^{18}+x^{16}-x^{14}+x^{12}-x^{10}+x^8-x^6+x^4-x^2+1$ then $E_1=-1 ,~E_2=1 ,~E_3=-1 ,~E_4=1 ,~E_5=-1,~E_6=1 $ which gives $a_1$ satisfies
\begin{enumerate}
\item $117q^2-26Z^2q+Z^4$
\item $3q^6-286Z^2q^5-78Z^{10}q+Z^{12}+1287q^4Z^4+715q^2Z^8-1716q^3Z^6$
\item $13q^6-52702Z^2q^5-78Z^{10}q+Z^{12}+43719q^4Z^4+1547q^2Z^8-12532q^3Z^6$
\item $8125q^6-64350Z^2q^5-78Z^{10}q+Z^{12}+116519q^4Z^4+2171q^2Z^8-25844q^3Z^6$
\item $81133q^6-138398Z^2q^5-78Z^{10}q+Z^{12}+81991q^4Z^4+1963q^2Z^8-20020q^3Z^6$
\item $137917q^6-244062Z^2q^5-78Z^{10}q+Z^{12}+126503q^4Z^4+2171q^2Z^8-25844q^3Z^6$
\end{enumerate}
none of which has an integer solution by $\mod 3,\mod 5$ test.
\item  If $H(x)=x^{24}-x^{20}+x^{16}-x^{12}+x^8-x^4+1$ then $E_1= 0,~E_2= -1,~E_3= 0,~E_4=1,~E_5=0,~E_6= -1$  which gives $a_i$'s satisfies $a_1=\pm \sqrt{2q},~ a_2=q,~a_3=0,~a_4=-q^2,~a_5=-q^2a_1,~a_6=-q^3$. The other solutions for $a_1$ are
    \begin{enumerate}
    \item $-14q+Z^2$
\item $ -8q^3+332Z^2q^2-38Z^4q+Z^6$
\item $ -56q^3+140Z^2q^2-42Z^4q+Z^6$
\item $64q^6-7488Z^2q^5-52Z^{10}q+Z^{12}+12016q^4Z^4+828q^2Z^8-5088q^3Z^6$
\item $64q^6-1728Z^2q^5-76Z^{10}q+Z^{12}+10480q^4Z^4+1212q^2Z^8-6432q^3Z^6$
\item $3136 q^6-40768 Z^2 q^5-84 Z^{10} q+Z^{12}+49392 q^4 Z^4+2044 q^2 Z^8-18144 q^3 Z^6$
\item $817216 q^6-632512 Z^2 q^5-76 Z^{10} q+Z^{12}+189680 q^4 Z^4+2108 q^2 Z^8-27936 q^3 Z^6$
    \end{enumerate}
none of which has integer solution by $\mod 3,\mod 5$ test.
\item  If $H(x)=x^{24}-x^{12}+1$ then $E_1=0 ,~E_2=0 ,~E_3=0 ,~E_4= 0,~E_5=0 ,~E_6=-1 $ then solutions are
 \begin{enumerate}
 \item $a_1=a_2=a_4=a_5=0,~ a_3= \pm q \sqrt{2q},~ a_6=q^3$  which is possible for q is odd power of 2.
 \item $a_3$ satisfies $Z^2-6q$ which has no integer solution for any q.
 \item The other solutions of $a_1$ are roots of polynomials
 \begin{enumerate}
 \item $-8q^3+36Z^2q^2-12Z^4q+Z^6$
\item $ 2(-24q^3+36Z^2q^2-12Z^4q+Z^6)$
\item $ 2(9q^6-108Z^2q^5-24Z^{10}q+Z^{12}+333q^4Z^4+162q^2Z^8-372q^3Z^6)$
\item $ 2(Z^2-q)$
\item $ 2(5Z^2-2q)$
\item $ 2(Z^{12}-12Z^{10}q+54q^2Z^8-112q^3Z^6+105q^4Z^4-36Z^2q^5+q^6)$
   \end{enumerate}
   none of which has integer solutions by $\mod 3, \mod 5$ test.
  \end{enumerate}

\item  If $H(x)=x^{24}+x^{22}-x^{18}-x^{16}+x^{12}-x^8-x^6+x^2+1$ then $E_1= 1,~E_2= 0,~E_3=-1 ,~E_4= -1,~E_5=0,~E_6= 1$ then solutions are
    \begin{enumerate}
    \item $a_1=\pm \sqrt{3q},~ a_2=2q,~a_3=\pm q\sqrt{3q},~a_4=q^2,~a_5=0,~a_6=-q^3$ which is a possibility  if $q$ odd power of $3$.
    \item $a_1=\pm \sqrt{7q},~ a_2=4q,~a_3=\pm q \sqrt{7q},~a_4=-q^2,~a_5=-2q^2a_1,~a_6=-7q^3$ which is a possibility  for $q$ odd power of $7$.
    \end{enumerate}
    The other solutions of $a_1$ are
    \begin{enumerate}
    \item $-7q^3+35Z^2q^2-21Z^4q+Z^6$
    \item $-27q^3+747Z^2q^2-57Z^4q+Z^6$
    \item $q^6-186Z^2q^5-58Z^{10}q+Z^{12}+1423q^4Z^4+655q^2Z^8-1772q^3Z^6$
    \item $1849q^6-9682Z^2q^5-50Z^{10}q+Z^{12}+11775q^4Z^4+743q^2Z^8-4572q^3Z^6$
    \item $27889q^6-84410Z^2q^5-58Z^{10}q+Z^{12}+50927q^4Z^4+1215q^2Z^8-11628q^3Z^6$
    \item $337561q^6-707266Z^2q^5-98Z^{10}q+Z^{12}+287679q^4Z^4+3143q^2Z^8-44604q^3Z^6$
    \end{enumerate}
    none of which has integer solutions by $\mod 3,\mod 5$ test.
\end{enumerate}

Case 2.  If $G(x)$ is reducible then  by theorem \ref{Red}, we get $G(x)=F_1(x)F_2(x)$.
 If $F_i \in \mathbb{Q}({\sqrt{q}})[x] \setminus \mathbb{Q}[x]$ then its roots are  mth root of unity  where $\phi(m)=2g=12$  where $m \in \{13, 21, 26, 28, 36, 42 \}$ in which case $F_i(x)F_i(x)^{\sigma}$ is irreducible cyclotomic factor of $x^m-1$ of degree $12$.

If $F_i \in \mathbb{Q}[x]$ then its roots are  mth root of unity  where  $\phi(m)=g=6$ in which case $m \in \{ 7, 9, 14, 18\}$ and in that case $F_i$ is irreducible cyclotomic factor of $x^m-1$ of degree $6$. 
 Now $H(x)=F_1 F_{1}^{\sigma}F_2 F_{2}^{\sigma}$ where not both $F_i \in \mathbb{Q}[x]$ as discussed earlier. Since $H(x)$ has only even degree terms, we look at all possibilities $F_1 F_{1}^{\sigma}F_2 F_{2}^{\sigma}$ from above, all of them are listed  below.
 \begin{enumerate}
 \item $H(x)=1+{x}^{6}+{x}^{4}+{x}^{2}+{x}^{10}+{x}^{12}+{x}^{8}+{x}^{22}+{x}^{20}+{x}^{18}+{x}^{16}+{x}^{24}+{x}^{14}$ which gives $E_1=E_2=E_3=E_4=E_5=E_6=1$ then $a_1=\pm \sqrt{13q}, ~a_2=7q, a_3=\pm 3q\sqrt{13q}, ~a_4=15q^2, a_5= \pm 5q^2\sqrt{13q}, a_6=19q^3$ which is possible if $q$ is odd power of $13$.
The other solutions for $a_1$ are roots of following polynomials
\begin{enumerate}
\item $ -325q^3+299Z^2q^2-39Z^4q+Z^6$
\item $ -625q^3+339Z^2q^2-35Z^4q+Z^6$
\item $ q^6-262Z^2q^5-70Z^{10}q+Z^{12}+3919q^4Z^4+1487q^2Z^8-9172q^3Z^6$
\item $ 2809q^6-18766Z^2q^5-46Z^{10}q+Z^{12}+16447q^4Z^4+743q^2Z^8-5284q^3Z^6$
\item $ 169q^6-10478Z^2q^5-78Z^{10}q+Z^{12}+22815q^4Z^4+1495q^2Z^8-9828q^3Z^6$
\item $ 10609q^6-35206Z^2q^5-70Z^{10}q+Z^{12}+28463q^4Z^4+1279q^2Z^8-9172q^3Z^6$
\item $ Z^2-q$
\end{enumerate}
which has no solutions for any $q$, odd power of prime by $\mod 3 \mod 5$ test.
 \item $H(x)= 1+{x}^{6}-{x}^{2}+{x}^{12}-{x}^{8}-{x}^{22}+{x}^{18}-{x}^{16}+{x}^{24}$ then $E_1=E_4=-1,~E_2=E_5=0, ~E_3=E_6=0$. Then possibilities of $a_1$ are

     \begin{enumerate}
\item $ Z^2-q$
\item $-21q+Z^2$
\item $-q^3+83Z^2q^2-19Z^4q+Z^6$
\item $-189q^3+315Z^2q^2-63Z^4q+Z^6$
\item $49q^6-1862Z^2q^5-70Z^{10}q+Z^{12}+14063q^4Z^4+1407q^2Z^8-9492q^3Z^6$
\item $1681q^6-12022Z^2q^5-54Z^{10}q+Z^{12}+20911q^4Z^4+991q^2Z^8-7412q^3Z^6$
\item $6889q^6-33150Z^2q^5-94Z^{10}q+Z^{12}+46687q^4Z^4+2263q^2Z^8-18500q^3Z^6$
\item $1194649q^6-1387902Z^2q^5-94Z^{10}q+Z^{12}+419647q^4Z^4+3271q^2Z^8-53444q^3Z^6$
\end{enumerate}
none of which has integer solutions.
 \item $H(x)=1-4\,{x}^{6}+3\,{x}^{4}-2\,{x}^{2}-6\,{x}^{10}+7\,{x}^{12}+5\,{x}^{8}-2\,{x}^{22}+3\,{x}^{20}-4\,{x}^{18}+5\,{x}^{16}+{x}^{24}-6\,{x}^{14}$ then $E_1=-2, ~E_2=3,~ E_3=-4,~ E_4=5,~ E_5=-6,~ E_6=7$ then solutions are
     \begin{enumerate}
    \item  $a_1=0,~a_2=-q, ~ a_3=0,~ a_4=q^2,~ a_5=0,~ a_6=-q^3$
    \item  $a_1=\pm 2 \sqrt{7q},~a_2=13q,~ a_3=\pm 8q \sqrt{7q}, ~a_4=29q^2, ~a_5=\pm 14q^2 \sqrt{7q},~a_6=41q^3$.
    \end{enumerate}
    In other solutions $a_1$ are
    \begin{enumerate}
\item $-7q^3+35Z^2q^2-21Z^4q+Z^6$
\item $-7q^3+49Z^2q^2-14Z^4q+Z^6$
\item $-7q^3+21Z^2q^2-14Z^4q+Z^6$
\item $2(-7q^3+14Z^2q^2-7Z^4q+Z^6)$
\end{enumerate}
none of which has integer solutions by $\mod 3 \mod 5$ test.
 \item $H(x)={x}^{24}-{x}^{22}+{x}^{20}-2{x}^{18}+2{x}^{16}-2{x}^{14}+3{x}^{12}-2{x}^{10}+2{x}^{8}-2{x}^{6}+{x}^{4}-{x}^{2}+1$ which gives $E_1=-1, ~E_2=1, ~E_3=E_5=-2, ~E_4=2, ~E_6=3$ which implies $a_1$ is root of following polynomials.
\begin{enumerate}
\item $289q^6-52314Z^2q^5-90Z^{10}q+Z^{12}+69327q^4Z^4+2607q^2Z^8-26924q^3Z^6$

\item $Z^{36}-270qZ^{34}+31449q^2Z^{32}-2101968q^3Z^{30}+90425076q^4Z^{28}-2660312328q^5Z^{26}\\
      +55467987076Z^{24}q^6-837266714544Z^{22}q^7+9254863738350Z^{20}q^8-75180384842708Z^{18}q^9+\\447103601802606Z^{16}q^{10}- 1923833859293616Z^{14}q^{11}+5861411718291844Z^{12}q^{12}-\\ 12204967208661768Z^{10}q^{13}+16400387853252084Z^8q^{14}-12905852655480016Z^6q^{15}+\\
      4963241293734297Z^4q^{16}-620389337072142Z^2q^{17}+7265822679361q^{18}$
\item $-7q^3+35Z^2q^2-21Z^4q+Z^6$
\end{enumerate}
 none of which has integer roots by $\mod 3 \mod 5$ test.

\item $H(x)=x^{24}-2x^{18}+3x^{12}-2x^6+1$ which means $E_1=E_2=E_4=E_5=0$, $~E_3=-2$ and $E_6=3$ which has following solutions for $a_i$'s

\begin{enumerate}
\item $a_1=a_2=a_4=a_5=0,~a_3= \pm 2q \sqrt{3q},~a_6=5q^3$ which is possible if $q$ is odd power of $3$.
\item $a_1=a_2=a_3=a_4=a_5=0$ and $a_6=-q^3$, which is possibility.
\item The other solutions of $a_1$ are
      \begin{enumerate}
      \item $2(-3q^3+9Z^2q^2-6Z^4q+Z^6)$
      \item $2(-3q^3+9Z^2q^2-6Z^4q+Z^6)$
      \item $2(-3q^3+9Z^2q^2-6Z^4q+Z^6)$
      \item $2(-27q^3+81Z^2q^2-18Z^4q+Z^6)$
      \item $4(-3q^3+9Z^2q^2-6Z^4q+Z^6)$
      \end{enumerate}
      none of which has an integer solution by $\mod 3 \mod 5$ test hence not possible.
 \end{enumerate}
 \end{enumerate}
\subsection{Case $a_{2i+1}=0$}
If  $a_{2i+1}=0$  for all $i$. Then  $G(x)={x}^{12}+{\frac {{a_2}{x}^{10}}{q}}+{\frac {{a_4}{x}^{8}}{{q}^{2}}}+{\frac {{a_6}{x}^{6}}{{q}^{3}}}+{\frac {{a_4}{x}^{4}}{{q}^{2}}}+{\frac {{a_2}{x}^{2}}{q}}+1$. We have following cases.
\begin{enumerate}
 \item If $G(x)$ is irreducible over $\mathbb{Q}$ then it is a degree $12$ irreducible cyclotomic factor of $x^m-1$ where $\phi(m)=12$. Since $G(x)$ has only even degree terms we have following possibilities
\begin{enumerate}
\item $G(x)=x^{12}-x^{10}+x^8-x^6+x^4-x^2+1$ which gives $X^{12} - qX^{10}+q^2X^8 - q^3X^6+q^4X^4-q^5X^2+q^6$  which is a possibility for $P(X)$ for $p \neq 7$. For $p=7$, $P(X)$ is reducible hence not possible.
\item $G(x)= x^{12}-x^6+1$ which gives  $P(X)= X^{12}-q^3X^6+q^6$ which is irreducible if $p \neq 3$ hence is possibility. For $p=3$ we have $X^{12}-q^3X^6+q^6=(X^6-\sqrt{3q^3}X^3+q^3)(X^6+\sqrt{3q^3}X^3+q^3)$ which is not irreducible hence is not a possibility.
\end{enumerate}
\item If $G(x)$ is reducible over $\mathbb{Q}$ then $G(x)=F_1F_2$ where $F_i$ are $6$ irreducible cyclotomic factor of $x^m-1$ where $\phi(m)=6$. Since $G(x)$ has only even degree terms we have following possibilities,
\begin{enumerate}
\item $G(x)=x^{12}+x^{10}+x^8+x^6+x^4+x^2+1$ which gives   $X^{12}+ qX^{10}+q^2X^8 + q^3X^6+q^4X^4 + q^5X^2+q^6$  which is a possibility for $P(X)$.
\item $G(x)= x^{12}+x^6+1$ which gives $P(X)= X^{12}+q^3X^6+q^6$ which is a possibility.
\end{enumerate}
\end{enumerate}
\subsection{$P(X)$  reducible}
 $P(X)$ is reducible then $P(X)= h(X)^e$ where $e|g$ , $e>1$ and $h(X)\in \mathbb{Z}[X]$. So $e =2, ~3$ or  $6$. Then,
 \begin{enumerate}
 \item If $e=2$ then  $\deg (h(X))=6$ say $h(X)=(X^6+fX^5+aX^4+bX^3+cX^2+dX \pm q^3)$. Then we have $G(t) =\left( {t}^{6}+{\frac {a{t}^{4}}{q}}+{\frac {c{t}^{2}}{{q}^{2}}}+1\right)-\frac{1}{\sqrt{q}} \left( f{t}^{5}+{\frac {b{t}^{3}}{q}}+{\frac {dt}{{q}^{2}}} \right)$. 
\begin{enumerate} 
\item If constant term of $h(X)$ has minus sign, then there is cyclotomic factor to compare with $H(t)$.
\item If constant term of $h(X)$ has plus sign, then $h(X)$ corresponds to characteristic polynomial of dimension  $3$ supersingular abelian variety and since all of them appear see \cite{Nart}, by Tate's theorem the abelian variety corresponding to $P(X)$ is not simple. 
\end{enumerate}

 \item If $e=3$ then  $\deg (h(X))=4$ say  $\deg (h(X))=4$ say $h(X)=(X^4+bX^3+cX^2+dX \pm g)$. This case is already discussed in dimension $4$, $P(X)$ reducible case.
\end{enumerate}
Hence we have,
\begin{theorem}The characteristic polynomial of a supersingular abelian variety over $\mathbb{F}_q$ , $q=p^n, ~ n$ odd, of dimension $6$ is given by one of following polynomials.
\begin{enumerate}
\item $p=2: X^{12} \pm \sqrt{2q}X^{11}+qX^{10}-q^2X^8-q^2( \pm \sqrt{2q})X^7-q^3X^6-q^3(\pm \sqrt{2q})X^5-q^4X^4+q^5X^2 \pm q^5\sqrt{2q}X+q^6$
\item $p=2: X^{12} \pm q \sqrt{2q}X^9+q^3X^6 \pm q^4\sqrt{2q}X^3+q^6$
\item $p=3:X^{12} \pm \sqrt{3q}X^{11}+2qX^{10} \pm 3q\sqrt{3q} X^9+q^2X^8-q^3X^6+q^4X^4 \pm 3q^4\sqrt{3q}X^3+2q^5X^2 \pm q^5\sqrt{3q}X+q^6$
\item $p=7:X^{12} \pm\sqrt{7q}X^{11}+4qX^{10} \pm q\sqrt{7q}X^9-q^2X^8-2q^2(\pm \sqrt{7q})X^7-7q^3X^6-2q^3( \pm\sqrt{7q})X^5-q^4X^4 \pm q^4\sqrt{7q}X^3+4q^5X^2 \pm q^5\sqrt{7q}X+q^6$
\item $p=7:X^{12} \pm 2\sqrt{7q}X^{11}+13qX^{10} \pm 8q\sqrt{7q}X^9+29q^2X^8 \pm14q^2 \sqrt{7q}X^7+41q^3X^6\pm14q^3 \sqrt{7q}X^5+29q^4X^4 \pm 8q^4\sqrt{7q}X^3+13q^5X^2\pm2q^5\sqrt{7q}X+q^6$
\item $p=13: X^{12} \pm \sqrt{13q}X^{11}+7qX^{10} \pm 3q \sqrt{13q}X^9+15q^2X^8 \pm 5q^2 \sqrt{13q}X^7+19q^3X^6 \pm 5q^3 \sqrt{13q}X^5+15q^4X^4 \pm 3q^4 \sqrt{13q}X^3+7q^5X^2 \pm q^5 \sqrt{13q}X+q^6$
\item  $X^{12} + qX^{10}+q^2X^8 + q^3X^6+q^4X^4 + q^5X^2+q^6$
\item $p \neq 7: X^{12} - qX^{10}+q^2X^8 - q^3X^6+q^4X^4 - q^5X^2+q^6$
\item $p \neq 3 :  X^{12}-q^3X^6+q^6$
\item $ X^{12}+q^3X^6+q^6$
\end{enumerate}
\end{theorem}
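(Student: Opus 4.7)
The plan is to follow the procedure laid out in Section 2 and mimic the case analysis used for dimensions $1$ through $5$, now with $g=6$. I split $P(X)$ into the irreducible case and the reducible case $P(X)=h(X)^e$ with $e\mid 6$, and within the irreducible case I separate the subcase where some $a_{2i+1}\ne 0$ from the subcase where all $a_{2i+1}=0$.

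First I would perform the substitution $x=X/\sqrt{q}$ to obtain $G(x)\in\mathbb{Q}(\sqrt q)[x]$ of degree $12$, then form $H(x)=G(x)G(x)^\sigma$. By Proposition \ref{S2} and Theorem \ref{Odd}, whenever some $a_{2i+1}\neq 0$ and $G$ is irreducible over $\mathbb{Q}(\sqrt q)$, $H(x)$ must be the $m$-th cyclotomic polynomial with $\phi(m)=4g=24$, forcing $m\in\{35,39,45,52,56,70,72,78,84,90\}$. I would list the cyclotomic factorizations, keep only those of degree $24$ with even powers of $x$ (since $H$ is a polynomial in $x^2$), and for each candidate equate coefficients to the six expressions $E_1,\dots,E_6$ in $a_1,\dots,a_6$ already computed in the excerpt. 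For each such system I would use Maple (or a Gr\"obner basis) to solve for the $a_i$'s and apply the mod $3$, mod $5$ test (and Eisenstein when applicable) to rule out spurious non-integer solutions, keeping those forcing $q$ to be an odd power of $2$, $3$, $7$ or $13$.

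Next, still assuming some $a_{2i+1}\neq 0$ but now $G$ reducible, Proposition \ref{Red} gives $G=F_1F_2$ with $\deg F_i=6$. If $F_i\in\mathbb{Q}(\sqrt q)[x]\setminus\mathbb{Q}[x]$ then $F_iF_i^\sigma$ is a degree-$12$ cyclotomic factor ($\phi(m)=12$, so $m\in\{13,21,26,28,36,42\}$); if $F_i\in\mathbb{Q}[x]$ then $F_i$ itself is a degree-$6$ cyclotomic factor ($\phi(m)=6$, so $m\in\{7,9,14,18\}$). I enumerate all products $F_1F_1^\sigma F_2F_2^\sigma$ whose result has only even powers of $x$, match them against the formula for $H$, and again solve and filter. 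After that, the easier subcase $a_{2i+1}=0$ reduces $G(x)$ to a degree-$12$ polynomial in $x^2$: if $G$ is irreducible it is a degree-$12$ cyclotomic factor (giving $X^{12}-qX^{10}+\cdots+q^6$ and $X^{12}-q^3X^6+q^6$, modulo the $p=7$ and $p=3$ exceptions where they split); if $G=F_1F_2$ with $\phi(m)=6$ then one obtains the $X^{12}+qX^{10}+\cdots+q^6$ and $X^{12}+q^3X^6+q^6$ possibilities.

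Finally, for $P(X)=h(X)^e$ reducible, the divisors $e\mid 6$ with $e>1$ give $e\in\{2,3,6\}$. The case $e=6$ requires $h$ of degree $2$, reducing to dimension $1$ and hence non-simple; $e=3$ requires $h$ of degree $4$ and is handled by the dimension-$4$ reducible analysis already done; $e=2$ requires $h$ of degree $6$, and by Tate's theorem (Theorem \ref{Tate}) a simple power can only arise from a Weil polynomial already on the dimension-$3$ list, so these do not yield new simple abelian varieties. Combining everything, only the ten families in the statement survive as characteristic polynomials of simple supersingular abelian varieties, so the list is complete. The main obstacle will be the bookkeeping in the two irreducible subcases with $a_{2i+1}\neq 0$: the cyclotomic enumeration is large, the algebraic systems for $(a_1,\dots,a_6)$ are genuinely multivariate, and checking irreducibility of the resulting one-variable elimination polynomials by the mod $3$/mod $5$ test (with Eisenstein as a secondary sieve) is where essentially all the work lies.
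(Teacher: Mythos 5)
Your proposal follows essentially the same route as the paper: the same split into irreducible/reducible $P(X)$, the same subcases on $a_{2i+1}$, the same cyclotomic enumeration with $\phi(m)=24$, $12$, and $6$ giving exactly the paper's lists of $m$, the same comparison against $E_1,\dots,E_6$ with the mod $3$/mod $5$ test, and the same reduction of the $h(X)^e$ cases to lower dimensions via Tate's theorem. No substantive difference; your treatment of $e=6$ is in fact slightly more explicit than the paper's.
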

\begin{theorem}
 All of the polynomials listed above occur as characteristic polyonomial of Frobenuis of abelian varieties of dimension 6.
\end{theorem}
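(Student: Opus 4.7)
The plan is to replicate the method used for the dimension $4$ theorem: for each of the ten polynomials on the list, factor $P(X)$ over $\mathbb{Q}_p$, use Theorem \ref{dim} to compute the local invariants of $End_k(A)\otimes\mathbb{Q}$, and verify that $2\dim A=\deg P=12$. Existence of an abelian variety for each listed Weil number is automatic from Honda--Tate, so the entire content is the dimension calculation.

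I would first dispose of items $7$--$10$, the four polynomials containing only even powers of $X$. Under the substitution $y=X^2/q$ each takes the form $q^6\Phi_m(y)$ for a cyclotomic polynomial: explicitly $\Phi_7$, $\Phi_{14}$, $\Phi_{18}$, $\Phi_9$ for items $7, 8, 9, 10$ respectively. The excluded primes ($p\neq 7$ in item $8$, $p\neq 3$ in item $9$) are precisely those for which $P(X)$ factors as a product of polynomials already on the list, so that the corresponding abelian variety would fail to be simple by Tate's theorem. For the allowed $p$, Theorem \ref{padics} factors $\Phi_m$ over $\mathbb{Q}_p$ into $r=\phi(m)/f$ pieces of degree $f$, where $f$ is the multiplicative order of $p\bmod m$. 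Pulling back through $y=X^2/q$ produces a factorization of $P(X)$ in which every constant term $f_i(0)$ satisfies $v_p(f_i(0))/v_p(q)\in\mathbb{Z}$, so each local invariant is $0$, $[End_k(A)\otimes\mathbb{Q}:\mathbb{Q}(\pi_A)]^{1/2}=1$, and Theorem \ref{dim} gives $2\dim A=[\mathbb{Q}(\pi_A):\mathbb{Q}]=12$.

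For items $1$--$6$ (those involving $\sqrt{pq}$) I would run the tower-of-fields argument from the dimension $4$ proof. Each $P(X)$ is first verified irreducible over $\mathbb{Q}$ by reduction modulo a small prime $\ell\neq p$. The substitution $X/\sqrt{q}\to t$ places $P(X/\sqrt{q})\in\mathbb{Q}(\sqrt{p})[t]$, whose roots are primitive $m$-th roots of unity for the cyclotomic index $m$ already recorded in the case analysis of $H(x)$ preceding the theorem (for instance $m=72$ or $84$ when $p=2$, $m=36$ when $p=3$, $m=56$ or $84$ when $p=7$, and $m=52$ when $p=13$). Using Theorem \ref{padics} I compute $[\mathbb{Q}_p(\zeta_m):\mathbb{Q}_p]$ as the multiplicative order of $p$ modulo the prime-to-$p$ part of $m$, and a ramification check establishes $\sqrt{p}\in\mathbb{Q}_p(\zeta_m)$. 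The tower $\mathbb{Q}_p\subset\mathbb{Q}_p(\sqrt{p})\subset\mathbb{Q}_p(\zeta_m)$ then forces either that $P(X/\sqrt{q})$ is irreducible over $\mathbb{Q}_p(\sqrt{p})$, or that it splits into two $\sigma$-conjugate degree-$6$ pieces whose product is irreducible over $\mathbb{Q}_p$, as happened for $p=5$ in dimension $4$. In both scenarios there is a single divisor of $p$ in $\mathbb{Q}(\pi_A)$, the local invariant at it is $0$, and Theorem \ref{dim} yields $\dim A=6$.

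The main obstacle is not the overall strategy but the case-by-case bookkeeping of the tower indices, especially for items $4$ and $5$ (both with $p=7$, where the cyclotomic index is divisible by $7$ and $7$ is tamely ramified in $\mathbb{Q}(\zeta_m)$) and for item $6$ with $p=13$ (index $52$). In each such case one must pin down $\mathrm{ord}_{m'}(p)$ for the prime-to-$p$ part $m'$ of $m$ and verify via quadratic reciprocity, or by an explicit Newton-polygon/Hensel-lifting argument, that $\sqrt{p}$ lies in the chosen cyclotomic completion, so that the tower degrees match $\deg P(X/\sqrt{q})=12$. Once these local data are recorded for each of the six polynomials, the invariant calculation is entirely uniform and the conclusion $\dim A=6$ follows in every case.
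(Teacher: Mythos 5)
Your proposal follows essentially the same route as the paper's proof: the four even polynomials (items 7--10) are reduced via the substitution $y=qX^2$ to the cyclotomic polynomials $\Phi_7,\Phi_{14},\Phi_{18},\Phi_9$ and factored over $\mathbb{Q}_p$ with Theorem \ref{padics} to get all local invariants $\equiv 0$, while the six $\sqrt{pq}$ polynomials are handled by the same tower argument $\mathbb{Q}_p\subset\mathbb{Q}_p(\sqrt{p})\subset\mathbb{Q}_p(\zeta_m)$ used for dimension $4$, concluding in each case that $P(X)$ is irreducible over $\mathbb{Q}_p$ (possibly as a product of two $\sigma$-conjugate degree-$6$ factors) and hence $\dim A=6$. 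The only discrepancies are in some of the cyclotomic indices you cite as examples --- the paper uses $m=52$ and $m=72$ for the two $p=2$ cases, $m=84$ for the $p=3$ case and the first $p=7$ case, $m=28$ for the second $p=7$ case, and roots $\zeta_{13},\zeta_{26}$ for $p=13$ --- which is precisely the bookkeeping you flag as remaining to be pinned down.
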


\begin{proof}
\begin{enumerate}
\item If $P(X)=X^{12} \pm \sqrt{2q}X^{11}+qX^{10}-q^2X^8-q^2( \pm \sqrt{2q})X^7-q^3X^6-q^3(\pm \sqrt{2q})X^5-q^4X^4+q^5X^2 \pm q^5\sqrt{2q}X+q^6$ for $p=2$, then $P(X)$ is irreducible in $\frac{\mathbb{Z}}{3\mathbb{Z}}$, hence irreducible over $\mathbb{Q}$.
$P(X/\sqrt{q})$ is irreducible over $\mathbb{Q}(\sqrt{2})[X]$ with one of the roots as $\zeta_{52}$ and splitting field $\mathbb{Q}(\zeta_{52})$. 
We have $[ \mathbb{Q}(\sqrt{2}):\mathbb{Q}(\zeta_{52})]=24$. Passing through completion  and using theorem \ref{padics} we get,\\
\begin{displaymath}
[\mathbb{Q}_{2}:\mathbb{Q}_{2}(\sqrt{2})][\mathbb{Q}_{2}(\sqrt{2}):\mathbb{Q}_{2}(\zeta_{52})]=[\mathbb{Q}_{2}:\mathbb{Q}_{2}(\zeta_{52})]=24.
\end{displaymath}
Therefore $[ \mathbb{Q}_2(\sqrt{2}):\mathbb{Q}_2(\zeta_{52})]=12$. But $P(X/\sqrt{q}) \in \mathbb{Q}_{2}(\sqrt{2})$ has degree $12$  and $\zeta_{52}$ as one of the  roots. 
This implies  $P(X/\sqrt{q})$ and hence $P(X)$ are irreducible over $\mathbb{Q}(\sqrt{2})$ which implies $P(X)$ is irredicible over  hence over $\mathbb{Q}_{2}$. Hence we have one invariant with $inv_{\mathfrak{p}_i} (End_k (A)\otimes \mathbb{Q}) \equiv 0 \mod \mathbb{Z}$  which shows the $\dim A=6$.

\item If $ P(X)=X^{12} \pm q \sqrt{2q}X^9+q^3X^6 \pm q^4\sqrt{2q}X^3+q^6$  where $p=2$, is irreducible over  $\frac{\mathbb{Z}}{3\mathbb{Z}}$, hence over $\mathbb{Q}$ with splitting field, $\mathbb{Q}(\zeta_{72})$. The similiar argument above shows that $P(X)$
 corresponds to abelian variety of dimension $6$.
\item If $P(X)=X^{12} \pm \sqrt{3q}X^{11}+2qX^{10} \pm 3q\sqrt{3q} X^9+q^2X^8-q^3X^6+q^4X^4 \pm 3q^4\sqrt{3q}X^3+2q^5X^2 \pm q^5\sqrt{3q}X+q^6$  where $p=3$, or $P(X)=X^{12} \pm\sqrt{7q}X^{11}+4qX^{10} \pm q\sqrt{7q}X^9-q^2X^8-2q^2(\pm \sqrt{7q})X^7-7q^3X^6-2q^3( \pm\sqrt{7q})X^5-q^4X^4 \pm q^4\sqrt{7q}X^3+4q^5X^2 \pm q^5\sqrt{7q}X+q^6$ where $p=7$ are irreducible over  $\frac{\mathbb{Z}}{2\mathbb{Z}}$, hence over $\mathbb{Q}$ with splitting field, $\mathbb{Q}(\zeta_{84})$. The similiar argument above shows that these $P(X)$ correspond to abelian variety of dimension $6$.

\item $P(X)=X^{12} \pm 2\sqrt{7q}X^{11}+13qX^{10} \pm 8q\sqrt{7q}X^9+29q^2X^8 \pm14q^2 \sqrt{7q}X^7+41q^3X^6\pm14q^3 \sqrt{7q}X^5+29q^4X^4 \pm 8q^4\sqrt{7q}X^3+13q^5X^2\pm2q^5\sqrt{7q}X+q^6$ where  $p=7$ is irreducible in $\frac{\mathbb{Z}}{2\mathbb{Z}}$, hence irreducible over $\mathbb{Q}$ with splitting field $\mathbb{Q}(\zeta_{28})$ . But $P(X/\sqrt{q})$ is reducible over $\mathbb{Q}(\sqrt{7})[X]$ with $P(X/ \sqrt{q})=F_1(X/ \sqrt{q})F_2(X/ \sqrt{q})$ each irreducible over  $\mathbb{Q}(\sqrt{7})[X]$ with both of them having roots as $\zeta_{28}$. 
But $[ \mathbb{Q}(\sqrt{7}):\mathbb{Q}(\zeta_{28})]=4$. Passing through completion we have $[ \mathbb{Q}_7(\sqrt{7}):\mathbb{Q}_7(\zeta_{28})]=6$ with root of $F_1(X/ \sqrt{q})$ and $F_2(X/ \sqrt{q})$ as $\zeta{28}$ and since $\deg F_i=6$,  $F_i(X/ \sqrt{q})$ are irreducible over  $\mathbb{Q}_7(\sqrt{7})$. $P(X)=F_1(X)F_2(X)$. But $F_i$  have  coefficients from $\mathbb{Q}_7(\sqrt{7}) / \mathbb{Q}_7$. Hence $P(X)$ is irreducible over $\mathbb{Q}_7$. This implies $\dim A=6$.

\item $P(X)= X^{12} \pm \sqrt{13q}X^{11}+7qX^{10} \pm 3q \sqrt{13q}X^9+15q^2X^8 \pm 5q^2 \sqrt{13q}X^7+19q^3X^6 \pm 5q^3 \sqrt{13q}X^5+15q^4X^4 \pm 3q^4 \sqrt{13q}X^3+7q^5X^2 \pm q^5 \sqrt{13q}X+q^6$ where $p=13$ is irreducible mod 2 hence is irreducible over $\mathbb{Q}$. Using the same argument as above with roots of 
$F_1$ and $F_2$ as $\zeta_{13}$ and $\zeta_{26}$ we get $\dim A =6$.
\item If $P(X)=X^{12}+q^3X^6+q^6$, substitute  $y=qX^2$. Then we get $P(X)=q^6(y^6+y^3+1)$. The polynomial $y^6+y^3+1$ is 9th cyclotomic polynomial.
\begin{enumerate}
\item If $p \equiv 1 \mod 9 $ then $y^6+y^3+1=\displaystyle\prod_{i=1}^{6}(y-\alpha_i)$ over  $\mathbb{Q}_p$ with $v_p(\alpha_i)=0$. Since $n$ is odd we get\\
 $P(X)=\displaystyle\prod_{i=1}^{6}(X^2-\alpha_i q)$. Hence we have 6 invariants with $inv_{\mathfrak{p}_i} (End_k (A)\otimes \mathbb{Q}) \equiv 0 \mod \mathbb{Z}$ which shows the $\dim A= 6$.
\item  If $p \equiv 4,7 \mod 9$  then $y^6+y^3+1=\displaystyle\prod_{i=1}^{2}(y^3+\gamma_i y^2+\beta_i y+\alpha_i)$ with $v_p(\alpha_i)=0$. Since $n$ is odd we get\\
 $P(X)=\displaystyle\prod_{i=1}^{2}(X^6+\gamma_i X^4+\beta_i X^2+\alpha_i)$. Hence we have 2 invariants with $inv_{\mathfrak{p}_i} (End_k (A)\otimes \mathbb{Q}) \equiv 0 \mod \mathbb{Z}$ which shows the $\dim A= 6$.
\item If $p \equiv 2, 5  \mod 9$ then $y^6+y^3+1$ hence $P(X)$ is irreducible over $\mathbb{Q}_p$,  we have 1 invariant with $inv_{\mathfrak{p}_i} (End_k (A)\otimes \mathbb{Q}) \equiv 0 \mod \mathbb{Z}$ which shows the $\dim A= 6$.
\item If $p \equiv 8 \mod 9$  then $y^6+y^3+1=\displaystyle\prod_{i=1}^{3}(y^2+ \beta_i y+\alpha_i)$ with $v_p(\alpha_i)=0$. Since $n$ is odd we get\\
$P(X)=\displaystyle\prod_{i=1}^{3}(X^4+ \beta_i X^2+\alpha_i)$, hence we have 3 invariants with  $inv_{\mathfrak{p}_i} (End_k (A)\otimes \mathbb{Q}) \equiv 0 \mod \mathbb{Z}$ which shows the $\dim A= 6$.
\end{enumerate} 
\item If $P(X)=X^{12}-q^3X^6+q^6$ with $p \neq 3$, substitute  $y=qX^2$. Then we get $P(X)=q^6(y^6-y^3+1)$. The polynomial $y^6-y^3+1$ is 18th cyclotomic polynomial.
\begin{enumerate}
\item If $p \equiv 1 \mod 18 $ then $y^6-y^3+1=\displaystyle\prod_{i=1}^{6}(y-\alpha_i)$ over  $\mathbb{Q}_p$ with $v_p(\alpha_i)=0$. Since $n$ is odd we get\\
 $P(X)=\displaystyle\prod_{i=1}^{6}(X^2-\alpha_i q)$. Hence we have 6 invariants with $inv_{\mathfrak{p}_i} (End_k (A)\otimes \mathbb{Q}) \equiv 0 \mod \mathbb{Z}$ which shows the $\dim A= 6$.
\item  If $p \equiv 7, 13 \mod 18$  then $y^6-y^3+1=\displaystyle\prod_{i=1}^{2}(y^3+\gamma_i y^2+\beta_i y+\alpha_i)$ with $v_p(\alpha_i)=0$. Since $n$ is odd we get\\
 $P(X)=\displaystyle\prod_{i=1}^{2}(X^6+\gamma_i X^4+\beta_i X^2+\alpha_i)$. Hence we have 2 invariants with $inv_{\mathfrak{p}_i} (End_k (A)\otimes \mathbb{Q}) \equiv 0 \mod \mathbb{Z}$ which shows the $\dim A= 6$.
\item If $p \equiv  5, 11  \mod 18$ then $y^6-y^3+1$ hence $P(X)$ is irreducible over $\mathbb{Q}_p$,  we have 1 invariant with $inv_{\mathfrak{p}_i} (End_k (A)\otimes \mathbb{Q}) \equiv 0 \mod \mathbb{Z}$ which shows the $\dim A= 6$.
\item If $p \equiv 17 \mod 9$  then $y^6-y^3+1=\displaystyle\prod_{i=1}^{3}(y^2+ \beta_i y+\alpha_i)$ with $v_p(\alpha_i)=0$. Since $n$ is odd we get\\
$P(X)=\displaystyle\prod_{i=1}^{3}(X^4+ \beta_i X^2+\alpha_i)$, hence we have 3 invariants with  $inv_{\mathfrak{p}_i} (End_k (A)\otimes \mathbb{Q}) \equiv 0 \mod \mathbb{Z}$ which shows the $\dim A= 6$.

\end{enumerate} 
\item If $P(X)= X^{12} - qX^{10}+q^2X^8 - q^3X^6+q^4X^4 - q^5X^2+q^6$ with $p \neq 7$, substitute  $y=qX^2$. Then we get $P(X)=q^6(y^6-y^5+y^4-y^3+y^2-y+1)$. The polynomial $y^6+y^5+y^4-y^3+y^2+y+1$ is 14th cyclotomic polynomial.
\begin{enumerate}
\item If $p \equiv 1 \mod 14 $ then $y^6-y^5+y^4-y^3+y^2-y+1=\displaystyle\prod_{i=1}^{6}(y-\alpha_i)$ over  $\mathbb{Q}_p$ with $v_p(\alpha_i)=0$. Since $n$ is odd we get\\
 $P(X)=\displaystyle\prod_{i=1}^{6}(X^2-\alpha_i q)$. Hence we have 6 invariants with $inv_{\mathfrak{p}_i} (End_k (A)\otimes \mathbb{Q}) \equiv 0 \mod \mathbb{Z}$ which shows the $\dim A= 6$.
\item  If $p \equiv 11,9 \mod 14$  then $y^6-y^5+y^4-y^3+y^2-y+1=\displaystyle\prod_{i=1}^{2}(y^3+\gamma_i y^2+\beta_i y+\alpha_i)$ with $v_p(\alpha_i)=0$. Since $n$ is odd we get\\
 $P(X)=\displaystyle\prod_{i=1}^{2}(X^6+\gamma_i X^4+\beta_i X^2+\alpha_i)$. Hence we have 2 invariants with $inv_{\mathfrak{p}_i} (End_k (A)\otimes \mathbb{Q}) \equiv 0 \mod \mathbb{Z}$ which shows the $\dim A= 6$.
\item If $p \equiv 3, 5  \mod 14$ then $y^6+y^3+1$ hence $P(X)$ is irreducible over $\mathbb{Q}_p$,  we have 1 invariant with $inv_{\mathfrak{p}_i} (End_k (A)\otimes \mathbb{Q}) \equiv 0 \mod \mathbb{Z}$ which shows the $\dim A= 6$.
\item If $p \equiv 13 \mod 14$  then $y^6-y^5+y^4-y^3+y^2-y+1=\displaystyle\prod_{i=1}^{3}(y^2+ \beta_i y+\alpha_i)$ with $v_p(\alpha_i)=0$. Since $n$ is odd we get\\
$P(X)=\displaystyle\prod_{i=1}^{3}(X^4+ \beta_i X^2+\alpha_i)$, hence we have 3 invariants with  $inv_{\mathfrak{p}_i} (End_k (A)\otimes \mathbb{Q}) \equiv 0 \mod \mathbb{Z}$ which shows the $\dim A= 6$.
\end{enumerate} 
\item If $P(X)= X^{12} + qX^{10}+q^2X^8 + q^3X^6+q^4X^4 + q^5X^2+q^6$ , substitute  $y=qX^2$. Then we get $P(X)=q^6(y^6+y^5+y^4+y^3+y^2+y+1)$. The polynomial $y^6+y^5+y^4+y^3+y^2+y+1$ is the 7th cyclotomic polynomial. Using same arguments above, we get $\dim A=6$
\end{enumerate}
\end{proof}

\section{Dimension 7}
The characteristic polynomial of Frobenius of an abelian variety of dimension $7$ is given by

$P(X)=X^{14}+a_{1}X^{13}+a_{2}X^{12}+a_{3}X^{11}+a_{4}X^{10}+a_5X^9+a_6X^8+a_7X^7+qa_6X^6+q^2a_5X^5+q^3a_4X^4+q^4a_{3}X^3+
q^5a_{2}X^2+q^6a_{1}X+q^7.$

If $P(X)$ is irreducible with $a_{2i+1} \neq 0$ then we have following cases.\\
\emph{Case 1.} If $G(x)$ is irreducible as in theorem \ref{S2} then $H(X)$  is a  polynomial of even degree terms only whose roots are mth roots of unity where $\phi=4g= 28$ which implies $m \in \{29, 58\}$. Each of which has following factorization.
\begin{enumerate}
\item $x^{29}-1=( x-1 )(1+x+{x}^{28}+{x}^{27}+{x}^{26}+{x}^{25}+{x}^{24}+{x}^{23}+{x}^{22}+{x}^{21}+{x}^{20}+{x}^{19}+{x}^{18}+{x}^{17}+{x}^{16}+{x}^{15}
    +{x}^{14}+{x}^{13}+{x}^{12}+{x}^{11}+{x}^{10}+{x}^{9}+{x}^{8}+{x}^{7}+{x}^{6}+{x}^{5}+{x}^{4}+{x}^{3}+{x}^{2})$
\item $x^{58}-1=( x-1) (1+x+{x}^{28}+{x}^{27}+{x}^{26}+{x}^{25}+{x}^{24}+{x}^{23}+{x}^{22}+{x}^{21}+{x}^{20}+{x}^{19}+{x}^{18}
    +{x}^{17}+{x}^{16}+{x}^{15}+{x}^{14}+{x}^{13}+{x}^{12}+{x}^{11}+{x}^{10}+{x}^{9}
+{x}^{8}+{x}^{7}+{x}^{6}+{x}^{5}+{x}^{4}+{x}^{3}+{x}^{2})(1+x)  (1-x+{x}^{28}-{x}^{27}+{x}^{26}-{x}^{25}+{x}^{24}-{x}^{23}+{x}^{22}-{x}^{21}+{x}^{20}-{x}^{19}+{x}^{18}-{x}^{17}+{x}^{16}-{x}^{15}
+{x}^{14}-{x}^{13}+{x}^{12}-{x}^{11}+{x}^{10}-{x}^{9}+{x}^{8}-{x}^{7}+{x}^{6}-{x}^{5}+{x}^{4}-{x}^{3}+{x}^{2}) $
\end{enumerate}
 As none of these factors above of degree $28$ are with only even degree terms, we see there is no possibility of this form.
\emph{Case 2.} If $G(x)$ is reducible. Then  by \ref{Red} $G(x)=F_1(x).F_2(x)$.
 If $F_i \in \mathbb{Q}({\sqrt{q}})[x]/ \mathbb{Q}[x]$ then its roots are  mth root of unity  where $\phi(m)=2g=14$  which has no solutions.
 If $F_i \in \mathbb{Q}[x]$ then its roots are  mth root of unity  where  $\phi(m)=g=7$ for which there is no solutions.
Therefore this case is not possible.
\subsection{Case $a_{2i+1}=0$} 
If $a_{2i+1}=0$ then  $G(x)$ is irreducible factor of degree $14$ of $x^m-1$ where   $\phi(m)=2g=14$ which has no solutions.
\subsection{$P(X)$  reducible} 
If $P(X)$ is reducible then $P(X)= h(X)^e$ where $e|7$ , $e>1$ and $h(X)\in \mathbb{Z}[X]$. So $e =7$, in which case $h(X)=X^2+aX \pm q$, this is already discussed in dimension $2$, reducible case and is not a possibility.
 
Hence we have,
\begin{theorem}
There is no simple supersingular abelian variety of dimension $7$.
\end{theorem}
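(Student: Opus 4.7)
The plan is to run the classification procedure developed in the previous sections for $g=7$ and show that every branch collapses. The decisive arithmetic fact is that $g=7$ is a prime for which the equations $\phi(m)=14$ and $\phi(m)=7$ have \emph{no} integer solutions (since $\phi(m)$ is even for $m>2$, and a straightforward check of divisors of $14$ rules out $\phi(m)=14$), while $\phi(m)=28$ admits only $m\in\{29,58\}$. This scarcity of cyclotomic candidates will force almost every branch to close immediately.

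First I would treat irreducible $P(X)$ with some $a_{2i+1}\neq 0$. In the subcase where $G(x)$ is irreducible over $\mathbb{Q}(\sqrt{q})$, Theorem \ref{Odd}(1) requires $H(x)$ to be the degree-$4g=28$ cyclotomic factor of $x^m-1$ with $\phi(m)=28$, so $m\in\{29,58\}$. I would write out $\Phi_{29}$ and $\Phi_{58}$ and note that neither contains only even powers of $x$, whereas $H(x)$ by construction does. In the subcase $G(x)=F_1(x)F_2(x)$ reducible, Proposition \ref{Red} and Theorem \ref{Odd}(2) force either $\phi(m)=2g=14$ (if $F_i\notin\mathbb{Q}[x]$) or $\phi(m)=g=7$ (if $F_i\in\mathbb{Q}[x]$); both have empty solution set, so this subcase is vacuous.

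Next I would handle irreducible $P(X)$ with $a_{2i+1}=0$ for all $i$. Proposition \ref{Even} requires $G(x)$ to be a cyclotomic factor of degree $g=7$ (with $\phi(m)=2g=14$) when irreducible, or a product of two such factors of degree $g/2$ when reducible — and $\phi(m)=14$ already has no solution, so there is nothing to compare. Finally, for reducible $P(X)=h(X)^e$, the divisibility $e\mid g=7$ with $e>1$ forces $e=7$, whence $h(X)=X^2+aX\pm q$; by the reducible-case analysis already performed in dimension $2$ (and traceable back to dimension $1$), such an $h$ corresponds to a supersingular elliptic curve, so the associated abelian variety is isogenous to a power of a $1$-dimensional factor and is not $\mathbb{F}_q$-simple by Theorem \ref{Tate}.

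The main obstacle, in principle, is the irreducible case with $\phi(m)=28$: one must be sure that neither $\Phi_{29}$ nor $\Phi_{58}$ has only even powers. This is immediate from inspection — $\Phi_{29}(x)=1+x+x^2+\cdots+x^{28}$ contains every power, and $\Phi_{58}(x)=\Phi_{29}(-x)$ is alternating but still contains all odd powers — so in fact no real calculation is needed. The heart of the argument is the parity observation that $\phi(m)=g$ and $\phi(m)=2g$ have no solutions when $g=7$; once this is recorded, every branch of the procedure terminates without producing a Weil polynomial, and the theorem follows.
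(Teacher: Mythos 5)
Your proposal is correct and follows essentially the same route as the paper's own argument: the irreducible case with $a_{2i+1}\neq 0$ dies because $\Phi_{29}$ and $\Phi_{58}$ contain odd powers while $H(x)$ cannot, the remaining irreducible branches die because $\phi(m)=14$ and $\phi(m)=7$ have no solutions, and the reducible case $P(X)=h(X)^e$ forces $e=7$ and reduces to the dimension-$1$ analysis. The only cosmetic slip is describing the reducible factors in the $a_{2i+1}=0$ case as having degree $g/2$, but that branch is vacuous for the same parity reason, so nothing is affected.
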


\section{Jacobians of Supersingular Curves}

An important question is whether the above actually occur as the characteristic polynomial of the Frobenius of Jacobian of a curve $C$. Work is in progess on this question. 
For genus $1, ~2$ all of the polynomials listed occur, see \cite{Waterhouse},  respectively. For genus $4$ the following occur  as  Jacobians of hyperelliptic curves of genus $4$ over $\mathbb{F}_q$ where $q=2^n,~ n$ odd. An example of each is given here for $q=2^5$ where $\alpha$ is a primitive root.

The procedure above can be extended to any genus. 

Work is in progress for $n$ even. 

\bigskip

\begin{tabular}{|l|l|} 
\hline
$P(X)$ & Curve\\ \hline
$X^8+\sqrt{2q}X^7+qX^6-q^2X^4+q^3X^2 + q^3\sqrt{2q}X+q^4$ & $ y^2 + y=x^9 + \alpha^2x^5 + \alpha^9x^3 $\\ \hline
 $X^8-\sqrt{2q}X^7+qX^6-q^2X^4+q^3X^2 - q^3\sqrt{2q}X+q^4$& $ y^2 + y=x^9 + \alpha^2x^5 + \alpha^{25}x^3 $\\ \hline
 $ X^8+q^4$ & $y^2 + y=x^9 + x^5 + \alpha^3x^3  $\\ \hline
$X^8+ qX^6+q^2X^4+q^3X^2+q^4$ &$ y^2 + y=x^9 + x^5 + \alpha x^3$\\ \hline
\end{tabular}\\
\\
\\
\textbf{Acknowledgement}\\
The authors would to thank Kevin Hutchinson, Hans Georg Ruck and Christophe Ritzenthaler for valuable discussions.

\end{document}